\newcommand{\Z}{\mathbb{Z}}
\newcommand{\R}{\mathbb{R}}
\newcommand{\Q}{\mathbb{Q}}
\newcommand{\seq}{\subseteq}
\DeclareMathOperator{\conv}{conv}
\DeclareMathOperator{\cone}{cone}
\DeclareMathOperator{\rank}{rank}
\renewcommand{\S}{\mathcal{S}}
\newcommand{\Sk}{\S^{k}}
\renewcommand{\L}{\mathcal{L}}
\newcommand{\Mset}{M}
\newcommand{\M}{\mathcal{\Mset}}
\newcommand{\Tset}{T}
\newcommand{\ones}{\boldsymbol{1}}
\newcommand{\zeros}{\boldsymbol{0}}
\newcommand{\floor}[1]{\lfloor#1\rfloor}
\newcommand{\ceil}[1]{\lceil#1\rceil}
\newcommand*{\medcap}{\mathbin{\scalebox{1.5}{\ensuremath{\cap}}}}%
\newcommand{\bpar}{\beta}
\newtheorem{theorem}{Theorem}
\newtheorem{proposition}{Proposition}
\newtheorem{lemma}{Lemma}
\newtheorem{corollary}{Corollary}
\newtheorem{observation}{Observation}
\newcommand{\cred}{\color{black}}
\newcommand{\zLPc}{z^{LP}(c)}
\newcommand{\zIc}{z^{I}(c)}
\renewcommand\footnoterule{%
  \kern-3\p@
  \hrule\@width \textwidth
  \kern2.6\p@}
\renewcommand*{\@fnsymbol}[1]{\ensuremath{\ifcase#1\or *\or \dagger\or \ddagger\or **\or \mathsection\or \mathparagraph\or \|\or  \dagger\dagger
   \or \ddagger\ddagger \else\@ctrerr\fi}}
\newcounter{mynotes}
\def\conv{\mathop{\rm conv}}
\def\cone{\mathop{\rm cone}}
\def\int{\mathop{\rm int}}
\def\spann{\mathop{\rm span}}
\begin{document}

%\title{Aggregation-based cutting-planes for packing and covering integer programs}
\title{Lower bounds on the lattice-free rank for packing and covering integer programs}
%the split rank and
%\titlerunning{Aggregation-based cutting-planes for packing and covering Integer Programs}        % if too long for running head

\author[1]{Merve Bodur\thanks{bodur@mie.utoronto.ca}}
\author[2]{Alberto Del Pia\thanks{delpia@wisc.edu}}
\author[3]{Santanu S. Dey\thanks{santanu.dey@isye.gatech.edu}}
\author[4]{Marco Molinaro\thanks{mmolinaro@inf.puc-rio.br}} 
\affil[1]{\small Department of Mechanical and Industrial Engineering, University of Toronto}
\affil[2]{\small Department of Industrial and Systems Engineering \& Wisconsin Institute for Discovery, University of Wisconsin-Madison}
\affil[3]{\small School of Industrial and Systems Engineering, Georgia Institute of Technology} 
\affil[4]{\small Computer Science Department, Pontifical Catholic University of Rio de Janeiro}

\maketitle

\begin{abstract}
In this paper, we present lower bounds on the rank of the split closure, the multi-branch closure and the lattice-free closure for packing sets as a function of the integrality gap. We also provide a similar lower bound on the split rank of covering polyhedra. These results indicate that whenever the integrality gap is high, these classes of cutting planes must necessarily be applied for many rounds in order to obtain the integer hull.
\\ \\
\smallskip
\noindent \textbf{Keywords.} Integer programming, packing, covering, split rank, multi-branch split rank, lattice-free rank

\end{abstract}
%%%%%%%%%%%%%%%%%%%%%%%%%%%
%%%%% --------------------------- %%%%% ---------------------------
\section{Introduction}
\label{sec:intro}

\emph{Split cuts} are a very important class of cutting planes in integer programming both from a theoretical and computational perspective (see for example \cite{balas:1979, BalasS08, cook:ka:sc:1990}). Recently, many generalizations of split cuts have been studied, such as the \emph{multi-branch split cuts} \cite{dash2014lattice, dash2013t, li:ri:2008} and the \emph{lattice-free cuts} \cite{andersen2007inequalities,basu2015geometric,borozan:2007,RichardDey}. In order to study the strength of the cutting plane procedures, a very useful concept is the notion of \emph{rank} which represents the minimum rounds of cuts needed to obtain the integer hull. The notion of rank was first studied in the context of Chv\'atal-Gomory (CG) cuts \cite{Schrijver80}. Many lower bounds on the rank of the above mentioned closures have been proven; see \cite{BasuCM12,bodur2017cutting,cook:ka:sc:1990,dey:lowerbnd:2009,DeyL11,li:ri:2008} for the split rank, see \cite{dash2013t} for the multi-branch rank, and see \cite{averkov2017approximation} for the lattice-free rank. 

A standard notion describing the {\cred difficulty} of an integer program is the \emph{integrality gap} which in this paper refers to the ratio between the optimal objective function values of the integer program and its linear programming relaxation. While it is natural to expect that the rank of a cutting plane procedure should increase with the increase in the integrality gap, only a few results of this nature exist in the literature \cite{bodur2016aggregation,PokuttaS11}. 

In this paper, we present lower bounds on the rank of the split closure, the multi-branch closure and the lattice-free closure for \emph{packing sets} as a function of the integrality gap. We also provide a similar lower bound on the split rank of \emph{covering polyhedra}. These results indicate that whenever the integrality gap is high, these classes of cutting planes must necessarily be applied for many rounds in order to obtain the integer hull.

The rest of the paper is organized as follows. We provide all necessary definitions in Section \ref{sec:Prelim}. We state all our main results in Section \ref{sec:Statements}. 
% In Section \ref{sec:Discussion} we state some open questions. 
Finally, in Section \ref{sec:Packing} and Section \ref{sec:Covering} we present the proofs for results concerning the packing and covering cases, respectively.
%%%%%%%%%%%%%%%%%%%%%%%%%%%%%%%%%%%%
%%%%%%%%%%%%%%%%%%%%%%%%%%%%%%%%%%%%
%%%%%%%%%%%%%%%%%%%%%%%%%%%%%%%%%%%%
%%%%%%%%%%%%%%%%%%%%%%%%%%%%%%%%%%%%
\section{Preliminaries}
\label{sec:Prelim}
%%%%%%%%%%%%%%%%%%%%%%%%%%%%%%%%%%%%
%%%%%%%%%%%%%%%%%%%%%%%%%%%%%%%%%%%%
For an integer $t \geq 1$, we use $[t]$ to describe the set $\{1, \dots, t\}$. Also, we represent the $j^{\text{th}}$ unit vector, the vector of ones and the vector of zeros in appropriate dimension by $e_j$, $\ones$ and $\zeros$, respectively. Given a set of vectors $v^1, \hdots, v^t$, we denote the linear subspace spanned by these given vectors as $\spann ( \{ v^j \}_{j \in [t]} )$.

\paragraph{Sets.} In this paper, we work with \emph{covering {\cred polyhedra}} and \emph{packing {\cred polyhedra}}, which are of the form 
$$P_C = \{x \in \R^n_+ \mid Ax \ge b \} \quad \text{and} \quad P_P = \{x \in \R^n_+ \mid Ax \le b \},$$
respectively, where all the data $(A,b) \in \Q_+^{m \times n} \times \Q_+^m$. {\cred Thus, an inequality of packing type (respectively covering type) is one of the form $a^\top x \le b$ (respectively $a^\top \ge b$) for non-negative $(a,b) \in \Q_+^n \times \Q_+$.} If it is obvious from the context that the polyhedron is of covering (resp. packing) type, we may drop the subscript $C$ (resp. $P$). For the packing case, we also work with more general sets. We call $Q \ {\cred \subseteq} \ \R_+^n$ a \emph{packing set} if $x \in Q$ and $\zeros \leq y \leq x$ imply that $y \in Q$. 

Throughout this paper, we make a technical assumption regarding the sets under consideration that we call as \emph{well-behavedness}. The set $P_C$ is \emph{well-behaved} if $A_{ij} \leq b_i$ for all $i \in [m], j \in [n]$. Notice that this is a natural assumption since if $A_{ij} > b_i$ for some $i \in [m], j \in [n]$, then we can replace the coefficient $A_{ij}$ by $b_i$ to obtain a tighter linear programming relaxation with the same set of feasible integer points. A packing set $Q$ is \emph{well-behaved} if $e_j \in Q$ for all $j \in [n]$. This is not a restrictive assumption since if $e_j \notin Q$ for some $j \in [n]$, then we replace $Q$ with the packing set $\{ x \in Q \mid x_j = 0\}$, which provides a tighter linear relaxation with the same set of feasible integer points. Note that if $Q$ is the polytope $P_P$, then {\cred the} well-behavedness definition is equivalent to $A_{ij} \leq b_i$ for all $i \in [m], j \in [n]$.

{\cred  A \emph{relaxation} of a set $P$ is any superset $\tilde{P} \supseteq P$.} Let $\alpha > 0$ be a scalar. If a given covering polyhedron $\tilde{P}_C$ is a relaxation of $P_C$ and satisfies
$$\min\{c^\top x \mid x \in \tilde{P}_C\} \ge \frac{1}{\alpha} \cdot \min\{c^\top x \mid x \in P_C\}, \ \ \forall c \in \R_+^n,$$
then $\tilde{P}_C$ is an \emph{$\alpha$-approximation} of $P_C$. Similarly, given a packing set $P_P$ and one of its relaxation $\tilde{P}_P$ of packing type, $\tilde{P}_P$ is an \emph{$\alpha$-approximation} of $P_P$ if 
$$\max\{c^\top x \mid x \in \tilde{P}_P\} \le \alpha \cdot \max\{c^\top x \mid x \in P_P \}, \ \ \forall c \in \R_+^n.$$
For a set $P \subseteq \R^n$, we define $\alpha P:= \{\alpha x \mid  x \in P\}$. 
The equivalent definitions of $\alpha$-approximation for covering and packing cases are provided in \cite{bodur2016aggregation} as
$$\frac{1}{\alpha} \tilde{P}_C  \seq P_C \quad \text{and} \quad \tilde{P}_P \seq \alpha P_P,$$
respectively.

Given a polyhedron $P \seq \R^n$, we denote its integer hull by $P^I := \conv( \{x \mid x \in P \cap \Z^n\})$ where $\conv(\cdot)$ is the convex hull operator. We let $\zLPc$ and $\zIc$ to denote the optimal value of a given objective function $c^\top x$ over $P$ and $P^I$, respectively. For convenience, we will sometimes refer to $\zLPc$ and $\zIc$ as $z^{LP}$ and $z^{I}$, respectively.

\paragraph{Closures.}
We call a set $\Mset \in \R^n$ a \emph{strict lattice-free set} if $M \cap \Z^n = \emptyset$. Note that the set $\Mset$ need not to be convex. Given a set $Q$, one can obtain a relaxation of $Q^I$ as 
$$Q^\Mset := \conv(Q \setminus \Mset).$$  
Given a collection of strict lattice-free sets $\M$, we define the corresponding closure as
$$\M(Q) = \bigcap_{\Mset \in \M} Q^\Mset.$$
For convenience, we sometimes refer to $\M$ as the closure operator or just as closure.

Next, we define three special cases of the strict lattice-free closures, namely the split closure, the multi-branch closure and the lattice-free closure.

We denote the \emph{split set} associated with $(\pi,\pi_0) \in \Z^n \times \Z$ by
$$S(\pi,\pi_0) := \{ x \in \R^n \mid \pi_0 < \pi^\top x < \pi_0+1\}.$$
{\cred Denoting} the collection of all split sets by 
$$
{\cred \S = \{S(\pi,\pi_0) \mid (\pi,\pi_0) \in \Z^n \times \Z\},}
$$
the split closure of $Q$, denoted as $\S(Q)$, is defined to be 
$$\S(Q) = \bigcap_{S \in \S} Q^S.$$
For convenience, we denote $Q^{S(\pi,\pi_0)}$ by $Q^{\pi,\pi_0} $ which is explicitly defined as 
$$Q^{\pi,\pi_0} = \conv(Q \setminus S(\pi,\pi_0)) = \conv \big( (Q \cap \{ \pi^\top x \leq \pi_0 \}) \cup (Q \cap \{ \pi^\top x \geq \pi_0+1 \}) \big).$$
%We use $\S(Q)$ to denote the split closure of $Q$, i.e., 
%$$\S(Q) = \bigcap_{(\pi,\pi_0) \in \Z^n \times \Z} Q^{\pi,\pi_0}.$$
%The \emph{1-row split closure} $1\S(P)$ is defined as the intersection of the split closures of the individual inequalities defining $P$. For instance, we have
%$$1\S(P_C) = \bigcap_{i \in [m]} \S(\{x \in \R^n_+ \mid A^i x \ge b_i\}) \quad \text{and} \quad 1\S(P_P) = \bigcap_{i \in [m]} \S(\{x \in \R^n_+ \mid A^ix \le b_i\}),$$ where $\Ai$ denotes the $i^{\text{th}}$ row of $A$.

%{\cred We are not really consistent with the $\M$ notation introduced above. 
%For example, $\S$ is not a set of strict lattice-free sets. 
%Then $\S(Q)$ would be automatically define
%}

A generalization of split closure, called the \emph{$k$-branch split closure}, which is defined by \cite{li:ri:2008}, is obtained by removing the union of \emph{at most} $k$ split sets simultaneously. Letting
\begin{align*}
Q^{\pi^1,\dots, \pi^k;\pi^1_0, \dots, \pi^k_0} :&= \conv \Big( Q \setminus \bigcup_{i \in [k]} S(\pi^i,\pi^i_0) \Big) \\
&= \conv \left( \bigcap_i (Q \cap \{ (\pi^i)^\top x \leq \pi^i_0 \}) \cup (Q \cap \{ (\pi^i)^\top x \geq \pi^i_0+1 \}) \right),
\end{align*}
the $k$-branch split closure of $Q$, denoted by $\Sk(Q)$, can be written as
$$\Sk(Q) = \bigcap_{(\pi^i,\pi^i_0) \in \Z^n \times \Z,~i \in [k]} Q^{\pi^1,\dots, \pi^k;\pi^1_0, \dots, \pi^k_0}.$$
Note that the 1-branch split closure is equivalent to the split closure, i.e, $\S^{1}(Q)=\S(Q)$.
 
A further generalization of the split closure is the so-called \emph{lattice-free closure}, which is obtained by considering convex sets having no integer point in their interior; see \cite{dash2012two,dash2014lattice} for relations to the $k$-branch split closure. A set $L \seq \R^n$ is called a \emph{lattice-free set} if $\int(L) \cap \Z^n = \emptyset$ where $\int(\cdot)$ is the interior operator. For each  integer $k \geq 2$, we define $\L^k$ as the family of full-dimensional lattice-free polyhedra $L \subset \R^n$ defined by \emph{at most} $k$ inequalities. (Note that it is not possible to have lattice-free sets defined by only one inequality.) We denote the \emph{$k$-lattice-free closure} of $P$ by $\L^k(Q)$, i.e.,
$$\L^k(Q) = \bigcap_{L \in {\cred \L^k}} Q^L,$$
where 
$$Q^L := \conv (Q \setminus \int(L)).$$
%{\cred Why don't we define it using again \emph{strict} lattice-free sets?}

%We let $\L$ denote the collection of all (polyhedral) \emph{lattice-free sets}. That is, each polyhedron $L \in \L$ satisfies $L \seq \R^n$ and $\int(L) \cap \Z^n = \emptyset$ where $\int(\cdot)$ is the interior operator. 

%We denote the lattice-free closure of $P$ by $\L(P)$, thus 
%$$\L(P) = \bigcap_{L \in \L} \conv (P \setminus \int(L)).$$
%The \emph{1-row $k$-branch split closure} and \emph{1-row lattice-free closure} of $P$ are defined similar to the 1-row split closure of $P$, and denoted by $1\Sk(P)$ and $1\L(P)$, respectively.

%\Sk,1\S, 1\Sk,1\L
Given a closure operator $\M$ and a nonnegative objective function $c \in \R^n_+$, we use $z^{\M}$ to denote the optimal value of the minimization (or  maximization) of $c^\top x$ over the closure $\M(Q)$. Lastly, we define the \emph{rank} of the closure $\M$, denoted by $\rank_{\M}(Q)$, as the minimum number of iterative applications of $\M$ to obtain the integer hull of $Q$. We note that the split rank, thus the multi-branch rank and the lattice-free rank, are finite whenever $Q$ is a rational polyhedron or is a bounded set \cite{Schrijver80}. 
%%%%%%%%%%%%%%%%%%%%%%%%%%%%%%%%%%%%
%%%%%%%%%%%%%%%%%%%%%%%%%%%%%%%%%%%%
\section{Main results}
\label{sec:Statements}
%###################%###################%###################
\subsection{Packing} 
%###################%###################%###################
The main proof strategy to prove lower bounds on ranks of various cutting plane closures is presented in the proposition below. 

\begin{proposition}
\label{thm:thm1}
Let $\M$ be a collection of strict lattice-free sets which satisfies the following two conditions:
\begin{enumerate}
\item Packing invariance: For any packing set $Q$, $\M(Q)$ is a packing set.
\item Constant approximation: There exists $\alpha_\M \geq 1$ such that $Q \seq \alpha_\M \M(Q)$ for every well-behaved packing set $Q$.
\end{enumerate}

Then, for any well-behaved packing set $Q$, 
$$\rank_{\M}(Q) \geq {\cred \sup_{c \in \R_+^n}} \left\lceil \frac {\log_2 \left( \frac{\zLPc}{\zIc}\right)}{\log_2 \alpha_\M}\right\rceil.$$
\end{proposition}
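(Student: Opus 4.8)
The plan is to bound the rank from below by iterating the constant-approximation hypothesis. Write $r^* := \rank_\M(Q)$; if $r^* = \infty$ the claimed inequality holds trivially, so assume $r^*$ is finite, meaning that the $r^*$-fold application $\M^{(r^*)}(Q)$ equals $Q^I$, where $\M^{(r)}$ denotes the operator $\M$ applied $r$ times. The heart of the argument is the iterated inclusion
$$Q \seq \alpha_\M^{\,r}\, \M^{(r)}(Q) \qquad \text{for every } r \geq 0.$$
Granting this, setting $r = r^*$ gives $Q \seq \alpha_\M^{\,r^*}\, Q^I$. Since $\max\{c^\top x \mid x \in \alpha_\M^{\,r^*} Q^I\} = \alpha_\M^{\,r^*}\max\{c^\top x \mid x \in Q^I\}$, this inclusion directly implies the objective-value statement $\zLPc \leq \alpha_\M^{\,r^*}\, \zIc$ for every $c \in \R_+^n$, which is exactly the equivalent characterization of $\alpha$-approximation for the packing case recalled in Section~\ref{sec:Prelim}. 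Taking base-$2$ logarithms (assuming $\alpha_\M > 1$, so that $\log_2 \alpha_\M > 0$; when $\alpha_\M = 1$ the approximation forces $\zLPc = \zIc$ and the bound is vacuous) and using that $r^*$ is an integer yields $r^* \geq \lceil \log_2(\zLPc/\zIc) / \log_2 \alpha_\M \rceil$ for each fixed $c$; taking the supremum over $c \in \R_+^n$ gives the proposition.

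To establish the iterated inclusion I would argue by induction on $r$, the base case $r = 0$ being the identity $Q \seq Q$. The inductive step invokes the constant-approximation hypothesis on the set $\M^{(r)}(Q)$, so the key point to verify is that the class of well-behaved packing sets is closed under $\M$; this is what allows condition~2 to be reapplied at every round. Packing invariance (condition~1) already gives that $\M^{(r)}(Q)$ is a packing set. For well-behavedness, observe that every strict lattice-free $M \in \M$ satisfies $M \cap \Z^n = \emptyset$, so $Q \cap \Z^n \seq Q \setminus M$ and hence $Q^I = \conv(Q \cap \Z^n) \seq \conv(Q \setminus M) = Q^M$ for each $M$; intersecting over $M \in \M$ gives $Q^I \seq \M(Q)$. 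In particular, if $Q$ is well-behaved then $e_j \in Q \cap \Z^n \seq Q^I \seq \M(Q)$ for every $j \in [n]$, so $\M(Q)$ is again well-behaved. Thus $\M^{(r)}(Q)$ is a well-behaved packing set for all $r$, and condition~2 yields $\M^{(r)}(Q) \seq \alpha_\M\, \M^{(r+1)}(Q)$. Scaling by $\alpha_\M^{\,r}$ and chaining with the inductive hypothesis closes the induction.

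The main obstacle is precisely this closure of the well-behaved packing sets under $\M$: without it the constant-approximation bound could not be iterated, and the single-round estimate $Q \seq \alpha_\M \M(Q)$ would say nothing about the rank. Once the inclusion $Q^I \seq \M(Q)$ is in hand the obstacle dissolves, and what remains is the routine translation between set inclusions and objective ratios together with the logarithm/ceiling manipulation. A minor point worth recording is that $\zIc > 0$ whenever $c \neq \zeros$, since well-behavedness gives $e_j \in Q^I$ and hence $\zIc \geq c_j$ for any $j$ with $c_j > 0$, which makes the ratio $\zLPc/\zIc$ well defined throughout the supremum.
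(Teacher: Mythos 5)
Your proposal is correct and follows essentially the same route as the paper: establish that well-behaved packing sets are closed under $\M$ (via $Q^I \seq \M(Q)$ plus packing invariance), iterate the constant-approximation bound $\rank_\M(Q)$ times, and telescope to get $\zLPc \le \alpha_\M^{\rank_\M(Q)}\, \zIc$ before taking logarithms. The only cosmetic difference is that you carry the iterated set inclusion $Q \seq \alpha_\M^{\,r}\M^{(r)}(Q)$ and convert to objective values at the end, whereas the paper telescopes the objective-value ratios $z^i/z^{i+1}$ directly; these are equivalent by the characterization of $\alpha$-approximation recalled in the preliminaries.
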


The proof of Proposition \ref{thm:thm1} is based on a simple iterative argument, which is provided in Section~\ref{subsec:4.1}. 

\subsubsection{Tools to prove the assumptions of Proposition \ref{thm:thm1}}
In order to use Proposition \ref{thm:thm1}, we need to verify the packing invariance and constant approximation properties. The next tool is very helpful in proving packing invariance.

\begin{theorem}
\label{thm:thm2}
Let $\M$ be a collection of strict lattice-free sets. For $\Tset \seq [n]$, define $H[\Tset] := \{ x \in \R^n \mid x_j = 0, \ \forall j \in T\}$. Given $\Mset \in \M$, let 
$$M[\Tset] := ( M \cap H[\Tset] ) + \spann ( \{ e_{j} \}_{j \in T} ). $$ 
Suppose that $\M$ satisfies the following property: For any $\Mset \in \M$ and $T \seq [n]$, $M[\Tset] \neq \emptyset$ implies that $M[\Tset] \in \M$. Then $\M$ is packing invariant. 
\end{theorem}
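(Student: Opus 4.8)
The plan is to show that $\M(Q)$ is convex, is contained in $\R^n_+$, and is closed under zeroing out a single coordinate, since these three properties together imply that $\M(Q)$ is a packing set. Convexity is immediate because $\M(Q)$ is an intersection of the convex sets $Q^\Mset = \conv(Q \setminus \Mset)$, and containment in $\R^n_+$ follows since each $Q^\Mset \seq \conv(\R^n_+) = \R^n_+$. For the packing property itself, I would introduce for each $T \seq [n]$ the coordinate projection $\rho_T \colon \R^n \to H[T]$ that sets to zero every coordinate indexed by $T$; note $\rho_T$ is linear and $\zeros \le \rho_T(x) \le x$ whenever $x \ge \zeros$. The reduction is: once I know $\rho_{\{j\}}(x) \in \M(Q)$ for every $x \in \M(Q)$ and every $j \in [n]$, convexity lets me slide coordinate $j$ of any $x \in \M(Q)$ down from $x_j$ to $0$ along a segment that stays in $\M(Q)$; iterating over $j = 1, \dots, n$ and applying the single-coordinate fact at each intermediate point reaches an arbitrary $y$ with $\zeros \le y \le x$, which is exactly the packing property.

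The main step is therefore to prove $\rho_T(x) \in \M(Q)$ for $x \in \M(Q)$ (I will only need $T = \{j\}$, but the argument is identical for general $T$), i.e.\ $\rho_T(x) \in Q^\Mset$ for every $\Mset \in \M$. The engine is the elementary membership identity
$$ z \in M[T] \iff \rho_T(z) \in M, $$
which holds because $M[T]$ is precisely the cylinder $(M \cap H[T]) + \spann(\{e_j\}_{j \in T})$, so a point lies in it exactly when the unique part of its decomposition lying in $H[T]$, namely $\rho_T(z)$, lies in $M \cap H[T]$, equivalently in $M$. Fixing $\Mset \in \M$, I split into two cases according to whether $M[T] = \emptyset$.

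If $M[T] \neq \emptyset$, then by hypothesis $M[T] \in \M$, so $x \in \M(Q) \seq Q^{M[T]} = \conv(Q \setminus M[T])$; writing $x = \sum_i \lambda_i q^i$ with $q^i \in Q \setminus M[T] \seq \R^n_+$ and applying the linear map $\rho_T$ gives $\rho_T(x) = \sum_i \lambda_i \rho_T(q^i)$. Each atom $\rho_T(q^i)$ lies in $Q$ because $\zeros \le \rho_T(q^i) \le q^i$ and $Q$ is packing, and lies outside $M$ by the membership identity (as $q^i \notin M[T]$); hence $\rho_T(x) \in \conv(Q \setminus M) = Q^\Mset$. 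If instead $M[T] = \emptyset$, equivalently $M \cap H[T] = \emptyset$, I use $x \in Q^\Mset = \conv(Q \setminus M)$ directly: writing $x = \sum_i \lambda_i q^i$ with $q^i \in Q \setminus M$ and pushing down, each $\rho_T(q^i)$ again lies in $Q$ by packing, and lies outside $M$ simply because $\rho_T(q^i) \in H[T]$ while $M \cap H[T] = \emptyset$; so once more $\rho_T(x) \in Q^\Mset$. Since $\Mset$ was arbitrary, $\rho_T(x) \in \M(Q)$.

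The step I expect to be the main obstacle is exactly the case split: the hypothesis $M[T] \in \M$ is only available when $M[T] \neq \emptyset$, and one has to notice that when $M$ misses the coordinate subspace $H[T]$ the hypothesis gives nothing, so a separate direct argument from $x \in Q^\Mset$ is required. The two supporting facts to verify carefully are that the pushed-down atoms $\rho_T(q^i)$ remain in $Q$ (this is precisely where the packing assumption on $Q$ is used) and that they avoid $M$ (via the membership identity in the first case, and via disjointness from $H[T]$ in the second).
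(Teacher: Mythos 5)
Your proof is correct, and it takes a genuinely different route from the paper's. The paper argues dually, at the level of inequalities: it takes an arbitrary valid inequality $\bpar^\top x \le \delta$ for $Q^\Mset$, lets $T$ be the set of coordinates with $\bpar_j < 0$, and shows that the truncated inequality $\breve\bpar^\top x \le \delta$ (negative coefficients replaced by zero) is valid for $\M(Q)$ --- valid for $Q$ itself when $\Mset \cap H[T] = \emptyset$, and valid for $Q^{\Mset[T]}$ otherwise --- so that $\M(Q)$ is cut out by packing-type inequalities. Your argument is the primal counterpart: instead of truncating inequalities you truncate points, proving $\rho_T(x) \in Q^\Mset$ by writing $x$ as a convex combination of atoms in $Q \setminus \Mset[T]$ (or in $Q \setminus \Mset$ when $\Mset[T] = \emptyset$) and pushing each atom down with $\rho_T$; the membership identity $z \in \Mset[T] \iff \rho_T(z) \in \Mset$, the case split on whether $\Mset$ meets $H[T]$, and the use of the hypothesis $\Mset[T] \in \M$ play exactly the same roles in both proofs. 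Your version buys directness: it establishes the downward-closure property of $\M(Q)$ pointwise, without needing to recover $\M(Q)$ as the intersection of its valid inequalities (a step that implicitly rests on a closedness consideration, since $\conv(Q \setminus \Mset)$ need not be closed for a general packing set $Q$); the paper's version buys the explicit dominating packing inequality, which is the form in which the result is most convenient to cite. Your reduction from arbitrary $\zeros \le y \le x$ to single-coordinate zeroing via convexity of $\M(Q)$ is sound, and all the supporting facts you flag (the atoms $\rho_T(q^i)$ staying in $Q$ by the packing property, and avoiding $\Mset$ via the membership identity or via $\Mset \cap H[T] = \emptyset$) check out.
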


Note that it is straightforward to see that the set $\Mset[T]$ in Theorem \ref{thm:thm2} is guaranteed to be a strict lattice-free set by construction. The proof of Theorem \ref{thm:thm2} is essentially based on the fact that a cut generated using a strict lattice-free set $M$ is dominated by a packing type inequality that is obtained using the strict lattice-free set $M[T]$ for a specifically chosen set $T$. The details of the proof of Theorem \ref{thm:thm2} are given in Section~\ref{subsec:4.2}.

We observe here that in order to use Proposition \ref{thm:thm1}, we must prove {\cred the} constant approximation property for general well-behaved packing sets, rather than just for polyhedra. The reason is that the closures of some cutting plane families we consider are not known to be polyhedral. In order to prove  {\cred the} constant approximation property for general well-behaved packing sets, we will find it convenient to prove this property first for well-behaved packing polyhedra. It turns out that this is sufficient to prove constant approximation property for any well-behaved packing set as the next theorem states.

\begin{theorem}
\label{thm:thm3}
Let $\M$ be a collection of strict lattice-free sets with the following property: There exist $\alpha_\M \geq 1$ such that $P_P \seq \alpha_\M \M(P_P)$ for every well-behaved packing polyhedron $P_P$. Then, $Q \seq \alpha_\M \M(Q)$ for every well-behaved packing set $Q$.
\end{theorem}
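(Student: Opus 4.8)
The plan is to peel the problem down to a single lattice-free set and a single point, and then feed a suitable inner polytope into the polyhedral hypothesis. Since positive scaling commutes with intersection, $\alpha_\M\,\M(Q)=\bigcap_{M\in\M}\alpha_\M\conv(Q\setminus M)$, so it suffices to prove the \emph{per-set} statement $Q\seq\alpha_\M\conv(Q\setminus M)$ for each fixed $M\in\M$; the hypothesis, read one $M$ at a time, gives exactly the polyhedral version $P_P\seq\alpha_\M\conv(P_P\setminus M)$ for every well-behaved packing polyhedron. Fix such an $M$ and a point $x^*\in Q$; the goal becomes $\tfrac{1}{\alpha_\M}x^*\in\conv(Q\setminus M)$. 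Throughout I would use two elementary facts: because $M$ is strict lattice-free, every integer point of $Q$ survives, i.e.\ $Q\cap\Z^n\seq Q\setminus M$ (in particular $\zeros,e_1,\dots,e_n\in Q\setminus M$); and because $Q$ is a packing set, the whole box $[\zeros,x^*]=\{y:\zeros\le y\le x^*\}$ lies in $Q$.

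First I would dispose of the easy case $x^*\notin M$. Then $x^*\in Q\setminus M$ and $\zeros\in Q\setminus M$, so the entire segment $\conv\{\zeros,x^*\}$ lies in $\conv(Q\setminus M)$; as $\tfrac{1}{\alpha_\M}x^*$ sits on this segment (recall $\alpha_\M\ge 1$), we are done with no appeal to the hypothesis. Hence I may assume $x^*\in M$, which is where the polyhedral approximation must do real work.

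For the remaining case the natural move is to insert an \emph{inner} well-behaved packing polytope. I would take $P$ to be the down-monotone hull of $\conv(\{x^*\}\cup\{e_j\}_{j\in[n]})$, that is $P=\{y\in\R^n_+: y\le z\text{ for some }z\in\conv(\{x^*,e_1,\dots,e_n\})\}$. This $P$ is a well-behaved packing polytope containing $x^*$, and the polyhedral hypothesis for the fixed $M$ gives $x^*\in P\seq\alpha_\M\conv(P\setminus M)$, i.e.\ $\tfrac{1}{\alpha_\M}x^*\in\conv(P\setminus M)$. What remains is to transfer this membership to $Q$, i.e.\ to argue that the convex combination expressing $\tfrac{1}{\alpha_\M}x^*$ in terms of points of $P\setminus M$ can be realized inside $Q\setminus M$. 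The favourable structure is that the vertices $e_j$ and $\zeros$ are integer, hence automatically in $Q\setminus M$, and that points of $P$ which are mere scalings of $x^*$ lie in the box $[\zeros,x^*]\seq Q$; the delicate contributions are the points of $P$ obtained by mixing $x^*$ with the unit vectors, which may fall outside $Q$.

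The main obstacle is precisely this transfer. It is \emph{not} valid to claim $\conv(P\setminus M)\seq\conv(Q\setminus M)$ in general: the mixed points of $P$ lying outside $Q$ need not be reabsorbed by $M$, so monotonicity of $\M$ does not close the gap, and convexifying $Q$ is illegitimate as well, since $\conv(\conv(Q)\setminus M)$ can strictly contain $\conv(Q\setminus M)$. To get around this I would argue at the level of the single scaled point $\tfrac{1}{\alpha_\M}x^*$ rather than the whole polytope, invoking the coordinate-restriction device behind Theorem~\ref{thm:thm2}: isolate the set $T\seq[n]$ of coordinates that the cut for $M$ effectively uses, project $x^*$ onto the face $H[T]$ along the surviving directions, and rebuild the needed combination from the surviving integer points of $Q$ together with the box $[\zeros,x^*]$, so that every point employed lies in $Q\setminus M$. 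Making this final step precise --- certifying that the \emph{point}, not merely the polytope, descends into $\conv(Q\setminus M)$ --- is the crux of the argument; by contrast the reduction to a single $M$, the easy case, and the construction of $P$ are all routine.
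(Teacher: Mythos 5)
Your overall architecture is the right one --- reduce to a single $M\in\M$, insert a well-behaved packing \emph{polyhedron} between (part of) $Q$ and $Q$, invoke the polyhedral hypothesis, and transfer --- but the proposal has a genuine gap exactly where you say the crux is: the transfer from $\tfrac{1}{\alpha_\M}x^*\in\conv(P\setminus M)$ to $\tfrac{1}{\alpha_\M}x^*\in\conv(Q\setminus M)$ is never carried out. You correctly observe that your $P$ (the down-monotone hull of $\conv(\{x^*,e_1,\dots,e_n\})$) need not be contained in $Q$ under the literal definition of a packing set, so that $\conv(P\setminus M)\seq\conv(Q\setminus M)$ is unavailable; but the repair you gesture at --- the coordinate-restriction device of Theorem~\ref{thm:thm2} --- is not an argument. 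That device shows a cut derived from $M$ is \emph{dominated} by a packing-type cut derived from $M[T]$; it gives no mechanism for relocating the points of a convex combination certifying membership in $\conv(P\setminus M)$ into $Q\setminus M$. Without that step the proof does not close.

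The paper avoids this issue by making the inner polyhedron sit \emph{inside} $Q$ while capturing almost all of it: Lemma~\ref{prop:SantanuLemma} produces, for every $\e>0$, a well-behaved packing polyhedron $P_\e$ with $\tfrac{1}{1+\e}Q\seq P_\e\seq Q$ (via support functions and a compactness argument on the nonnegative unit sphere, plus a recession-cone decomposition for unbounded $Q$). Then the chain $\tfrac{1}{1+\e}Q\seq P_\e\seq\alpha_\M (P_\e)^M\seq\alpha_\M Q^M$ holds, where the last containment is immediate precisely because $P_\e\seq Q$, and letting $\e\to 0$ finishes. Note also that if packing sets are taken to be convex --- an assumption the paper's own proof of Lemma~\ref{prop:SantanuLemma} tacitly uses when treating $\tilde{Q}$ as a compact convex set --- then your $P$ \emph{is} contained in $Q$ (it is the down-hull of a convex combination of points of $Q$), the transfer is trivial, and your pointwise argument would give a cleaner proof than the paper's with no $\e$-approximation at all. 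As written, however, you neither invoke convexity nor supply a substitute for the missing transfer, so the argument is incomplete.
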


Theorem \ref{thm:thm3} is proven by first constructing a well-behaved packing polyhedron which is an inner approximation of $Q$ and is arbitrarily close to $Q$. We then show how to ``transfer" the $\alpha_\M$ factor from this polyhedron to $Q$. The details of the proof of Theorem \ref{thm:thm3} are provided in Section~\ref{subsec:4.3}. 

\subsubsection{Applications of Proposition \ref{thm:thm1} to split, multi-branch split and lattice-free closures}

We use Theorem \ref{thm:thm2} to verify the following result.

\begin{theorem}[Packing invariance]
\label{thm:thm4}
$\M$ is packing invariant for $\M \in \{ \S, \S^k, \L^k \}$.
\end{theorem}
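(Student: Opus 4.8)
The plan is to verify the single hypothesis of Theorem~\ref{thm:thm2} for each of the three families $\S$, $\S^k$, and $\L^k$, and then invoke Theorem~\ref{thm:thm2} directly. That is, for each family I must show that for any member $\Mset$ of the family and any index set $T \seq [n]$, whenever $M[\Tset] \neq \emptyset$ the set $M[\Tset]$ again belongs to the same family. Recall $M[\Tset] = (M \cap H[\Tset]) + \spann(\{e_j\}_{j \in T})$, so geometrically $M[\Tset]$ is obtained by first slicing $M$ with the coordinate subspace where the $T$-coordinates vanish, and then extruding that slice freely in all the $T$-directions. The key structural observation I expect to use repeatedly is that these operations interact cleanly with halfspaces: a single split or halfspace $\{a^\top x \le b\}$ restricted to $H[\Tset]$ and then extruded in the $T$-directions is again of the same type but with the coefficients $a_j$ for $j \in T$ zeroed out.

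First I would treat the split case $\M = \S$. A split set is $S(\pi,\pi_0) = \{x \mid \pi_0 < \pi^\top x < \pi_0+1\}$ with $(\pi,\pi_0) \in \Z^n \times \Z$. Intersecting with $H[\Tset]$ kills the contribution of the $T$-coordinates, and extruding in the $T$-directions means those coordinates become free; the net effect is that $S(\pi,\pi_0)[\Tset]$ is precisely $S(\pi',\pi_0)$ where $\pi'$ is $\pi$ with its $T$-entries set to zero. Since $\pi' \in \Z^n$ and $\pi_0 \in \Z$ still hold, this is again a split set, and whenever it is nonempty it lies in $\S$. The one subtlety is that if every nonzero entry of $\pi$ is indexed by $T$, then $\pi' = \zeros$ and the ``split'' $\{x \mid \pi_0 < 0 < \pi_0+1\}$ is either empty or all of $\R^n$; this is exactly the case excluded by the hypothesis $M[\Tset] \neq \emptyset$ (or handled by noting the degenerate set is not strict-lattice-free and hence irrelevant), so the implication holds vacuously there. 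I would make this case distinction explicit.

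Next I would handle $\S^k$ and $\L^k$ together, since both are defined by at most $k$ inequality-type constraints and the argument is uniform. For $\L^k$, a member $L$ is a full-dimensional lattice-free polyhedron $\bigcap_{i \in [k]} \{(a^i)^\top x \le b_i\}$; I would show that $L[\Tset]$ equals the polyhedron defined by the same inequalities with each $a^i$ replaced by its $T$-zeroed version $(a^i)'$. Again each slice-and-extrude of a halfspace is a halfspace (or becomes redundant/all of $\R^n$ when $(a^i)' = \zeros$, in which case that constraint is simply dropped, never increasing the count above $k$). Thus $L[\Tset]$ is defined by at most $k$ inequalities; I must then argue it is still lattice-free and full-dimensional whenever it is nonempty — lattice-freeness follows from the remark already noted in the excerpt that $M[\Tset]$ is automatically strict-lattice-free, and full-dimensionality follows because extrusion in the $T$-directions guarantees the result has full dimension as soon as it is nonempty. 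Membership in $\L^k$ then follows. The $\S^k$ case is the analogous statement for a union of at most $k$ splits: $\big(\bigcup_{i\in[k]} S(\pi^i,\pi^i_0)\big)[\Tset] = \bigcup_{i\in[k]} S((\pi^i)',\pi^i_0)$, using that both the coordinate-slice and the Minkowski-extrusion distribute over finite unions, so it remains a union of at most $k$ split sets.

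The main obstacle I anticipate is not any single family but pinning down the precise commutation identity $\big(\bigcap_i H_i \big)[\Tset] = \bigcap_i (H_i[\Tset])$ for halfspaces $H_i$ (and the analogous identity for unions), together with the degenerate boundary cases where a coefficient vector collapses to zero. The slice operation $M \mapsto M \cap H[\Tset]$ distributes over intersections trivially, but the extrusion operation $N \mapsto N + \spann(\{e_j\}_{j\in T})$ does \emph{not} distribute over intersections in general, so I would have to verify the identity for the specific structured sets at hand rather than invoking a generic distributivity lemma. I expect this to hold because after slicing into $H[\Tset]$ the defining inequalities no longer involve the $T$-coordinates at all, which is exactly what makes subsequent extrusion in those directions harmless and lets it pass through the intersection; I would prove this containment carefully in both directions. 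Once that identity and the degenerate-case bookkeeping are settled, closure of each family under $\cdot[\Tset]$ is immediate, and Theorem~\ref{thm:thm2} yields packing invariance for all three families at once.
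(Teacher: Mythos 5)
Your proposal is correct and follows essentially the same route as the paper: verify the hypothesis of Theorem~\ref{thm:thm2} by computing $M[T]$ explicitly for each family, observing that slicing by $H[T]$ and extruding along $\spann(\{e_j\}_{j\in T})$ simply zeroes out the $T$-entries of each coefficient vector $\pi^i$, and handling the degenerate case $\breve\pi^i=\zeros$ (an empty split, or a trivially satisfied / infeasible inequality for $\L^k$). The only difference is one of emphasis: you flag and propose to verify the commutation identity for the slice-and-extrude operation, which the paper passes over with an ``observe that,'' and the paper folds $\S$ into $\S^k$ rather than treating it separately.
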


Theorem \ref{thm:thm4} is proven in Section~\ref{subsec:4.4}.

\begin{theorem}[Constant approximation]
\label{thm:thm5}
For $\M \in \{ \S, \S^k, \L^k \}$, $\M$ satisfies the constant approximation property, where {\cred we can choose} $\alpha_{\S} = 2$, {\cred $\alpha_{\S^k} = \min \{ 2^k,n \}+1$, and $\alpha_{\L^k} = \min \{ k,n \}+1$}. 

Moreover, the factor $\alpha_{\S}$ is tight, i.e., for every $\epsilon >0$, there exists a well-behaved packing polyhedron $\tilde{P}_P$ such that $\tilde{P}_P \not\subseteq (2-\epsilon) \S(\tilde{P}_P)$. 
\end{theorem}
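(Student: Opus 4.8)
The plan is to use Theorem~\ref{thm:thm3} to reduce everything to well-behaved packing \emph{polyhedra}: it suffices to show, for each family $\M\in\{\S,\Sk,\L^k\}$ and the claimed constant $\alpha_\M$, that $P_P\seq\alpha_\M\,\M(P_P)$ for every well-behaved packing polyhedron $P_P=\{x\in\R^n_+\mid Ax\le b\}$. Fix such a $P_P$ and a point $x^*\in P_P$. Since $\M(P_P)=\bigcap_{M}P_P^M$ and $P_P\seq\alpha_\M\,\M(P_P)$ is equivalent to $\tfrac{1}{\alpha_\M}x^*\in\M(P_P)$, the whole statement reduces to the single claim that $\tfrac{1}{\alpha_\M}x^*\in P_P^M$ for every generator $M$ of the family. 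Two structural facts will be used throughout: $P_P$ is down-closed (so $y\in P_P$ whenever $\zeros\le y\le x^*$, and in particular $\tfrac{1}{\alpha_\M}x^*\in P_P$), and well-behavedness gives $e_j\in P_P$ for all $j$.

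I would first settle the split case, where $\alpha_\S=2$ and $M=S(\pi,\pi_0)$. Using $S(\pi,\pi_0)=S(-\pi,-\pi_0-1)$ I may assume $\pi_0\ge 0$. Write $v=\pi^\top x^*$. If $\tfrac12 v\notin(\pi_0,\pi_0+1)$ then $\tfrac12 x^*\notin S(\pi,\pi_0)$, and down-closedness gives $\tfrac12 x^*\in P_P\setminus S(\pi,\pi_0)\seq P_P^{\pi,\pi_0}$. Otherwise I split the coordinates by the sign of $\pi_j$: let $x^\ge$ agree with $x^*$ on $\{j:\pi_j>0\}$ and vanish elsewhere, and set $x^\le=x^*-x^\ge$, so $\tfrac12 x^*=\tfrac12 x^\le+\tfrac12 x^\ge$ with $\pi^\top x^\le\le 0\le\pi_0$ and $\pi^\top x^\ge=:v_+\ge 0$. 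If $v_+\ge\pi_0+1$ this is already a convex combination of a point of $P_P$ below the split and one above it, so $\tfrac12 x^*\in P_P^{\pi,\pi_0}$. The only remaining case $v_+<\pi_0+1$ forces $\pi_0=0$, since $v\le v_+<\pi_0+1$ while $\tfrac12 v>\pi_0$ give $2\pi_0<\pi_0+1$. Here $v>0$ guarantees $\sigma:=\sum_{j:\pi_j\ge1}x^*_j>0$, and I rewrite $x^\ge=\sum_{j:\pi_j\ge1}x^*_j\,e_j=\sigma\,\bar u$, where $\bar u$ is the corresponding convex combination of the $e_j$; each such $e_j$ lies in $P_P$ by well-behavedness with $\pi^\top e_j=\pi_j\ge1$, so $\bar u$ is a point of $P_P$ above the split, and $\sigma\le v_+<1$. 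Filling the leftover weight with the origin (which is below the split) yields $\tfrac12 x^*=\tfrac12 x^\le+\tfrac{\sigma}{2}\bar u+\tfrac{1-\sigma}{2}\zeros\in P_P^{\pi,0}$.

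For $\Sk$ and $\L^k$ I would run the same decompose-and-recombine scheme, but with the weaker generic constants. Writing a lattice-free generator as $L=\{x\mid(a^i)^\top x\le\beta_i,\ i\in[r]\}$ with $r\le k$, one has $P_P^L=\conv\bigl(\bigcup_{i\in[r]}(P_P\cap\{(a^i)^\top x\ge\beta_i\})\bigr)$. The origin is always admissible, since $\zeros\in\Z^n$ forces $\zeros\notin\int(L)$, which is the source of the ``$+1$''; the remaining mass of $\tfrac{1}{\alpha}x^*$ I would distribute over boosted unit vectors $e_j\in P_P$, each chosen to leave $\int(L)$ through one facet, grouping coordinates by the facet they cross. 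The number of groups is at most the number of facets $r\le k$ (giving $\min\{k,n\}+1$ for $\L^k$) or at most the $2^k$ sign-cells for $\Sk$, and never exceeds $n$, which produces the $\min\{\cdot,n\}$ truncation; the split case is handled separately, as above, to obtain the sharper factor $2$. I expect this coordination step to be the main obstacle: unlike the single-facet split, one convex combination must now place every piece strictly outside $\int(L)$ through a facet it genuinely crosses while the post-scaling weights still sum to one (the multi-facet analogue of the inequality $\sigma<1$), and controlling this simultaneously is precisely what forces the dimension-counting behind the $\min\{\cdot,n\}$ bound.

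Finally, tightness of $\alpha_\S=2$ needs only a one-dimensional example. For $\epsilon>0$ take $\tilde P_P=\{x\in\R_+\mid x\le 2-\tfrac{\epsilon}{2}\}$, which is well-behaved since $1\le 2-\tfrac{\epsilon}{2}$. Its only integer points are $0$ and $1$, and the split $S(1,1)=\{1<x<2\}$ already gives $\tilde P_P^{1,1}=[0,1]$ (the part with $x\ge 2$ is empty), so $\S(\tilde P_P)\seq[0,1]$. Then $(2-\epsilon)\,\S(\tilde P_P)\seq[0,2-\epsilon]$ does not contain the point $2-\tfrac{\epsilon}{2}\in\tilde P_P$, which shows $\tilde P_P\not\seq(2-\epsilon)\,\S(\tilde P_P)$, as required.
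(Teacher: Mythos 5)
Your split-case argument and your tightness example are correct, and both take a genuinely different route from the paper. For $\alpha_{\S}=2$ the paper applies Farkas' lemma to the two sides of the disjunction, reduces to an auxiliary polyhedron with two nontrivial rows, restricts to $\R^2$, and performs a case analysis there (Lemma~\ref{lem:pack_2approx}); you instead exhibit, for each $x^*\in P_P$ and each split with $\pi_0\ge 0$, an explicit convex combination witnessing $\tfrac12 x^*\in P_P^{\pi,\pi_0}$, using only down-closedness, well-behavedness and the origin. I checked your three cases ($\tfrac12\pi^\top x^*$ outside the strip; $v_+\ge\pi_0+1$; the residual case forcing $\pi_0=0$ with weights $\tfrac12,\tfrac{\sigma}{2},\tfrac{1-\sigma}{2}$) and they all go through, so this is a cleaner and more self-contained proof of the factor $2$. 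Your tightness example is likewise simpler than the paper's two-dimensional one: in $\R^1_+$ the split closure of $[0,2-\tfrac{\epsilon}{2}]$ is its integer hull $[0,1]$, which already defeats any factor below $2-\tfrac{\epsilon}{2}$.

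For $\Sk$ and $\L^k$, however, the proposal has a genuine gap, which you yourself flag as ``the main obstacle'': you never construct the convex combination placing each piece of $\tfrac{1}{\alpha}x^*$ outside $\int(L)$, and I do not believe the ``boosted unit vectors grouped by the facet they cross'' idea can be pushed through as stated --- the facet normals of a general lattice-free polyhedron are not sign-constrained, so there is no reason a unit vector $e_j$ (or any down-scaled copy of $x^*$) lands on the far side of any particular facet. The paper's proof uses a mechanism your sketch is missing: for each nonempty piece of the disjunction, Farkas' lemma produces a single aggregated packing inequality $(\lambda^G)^\top Ax\le(\lambda^G)^\top b$ on which the cut is valid; collecting these gives a well-behaved packing relaxation $Q\supseteq P_P$ with at most $2^k$ (resp.\ $k$) nontrivial rows such that the cut is valid for $Q^I$ (lattice-freeness of $M$ is what places $Q\cap\Z^n$ outside $M$), and the integrality-gap bound $Q\seq(\min\{m,n\}+1)\,Q^I$ for well-behaved packing polyhedra with $m$ rows (Proposition~\ref{prop:intGapPack}) then delivers the constant. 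Only the weaker $n+1$ part of the claimed bounds follows from your ingredients alone (via $P_P\seq(n+1)P_P^I\seq(n+1)\M(P_P)$); to obtain $2^k+1$ and $k+1$ you need this pass through the integer hull of a low-row-count auxiliary relaxation, or some substitute for it.
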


Observe that the split cuts are a special case of multi-branch split cuts. However, we have stated their constant approximation result separately since the general factor for multi-branch split closure is not tight for the split closure. Indeed, proving the factor of 2 in the case of split cuts involves more careful analyses. Moreover, this factor of 2 for the split case is tight as stated in the theorem. The proof of Theorem \ref{thm:thm5} for the split, multi-branch split and lattice-free cases are given in Sections \ref{subsubsec:4.5.1}, \ref{subsubsec:4.5.2} and \ref{subsubsec:4.5.3}, respectively.

Note that a factor of $2$ is proven in \cite{bodur2016aggregation} as an approximation factor of the \emph{aggregation closure}, which is very similar to the result {\cred for the} split closure in Theorem \ref{thm:thm5}. However, the split closure result of Theorem \ref{thm:thm5} is not implied by the result of \cite{bodur2016aggregation} since for packing polyhedra, split cuts are not {\cred always} dominated by \emph{aggregation cuts}, see the example given in Observation \ref{obs:PackingSplitAgg} in Appendix \ref{sec:appendix}.

Proposition \ref{thm:thm1}, Theorem \ref{thm:thm4} and Theorem \ref{thm:thm5} lead us to the following lower bounds on the rank of {\cred the} split closure, $k$-branch split closure and $k$-lattice-free closure of packing sets. As Corollary \ref{cor:cor1} is a direct application of Proposition \ref{thm:thm1}, we omit its proof.

\begin{corollary}
\label{cor:cor1}
Let $Q$ be a well-behaved packing set. Then
\begin{enumerate}
\item $\rank_{\S}(Q) \geq {\cred \sup_{c \in \R_+^n}} \left\lceil \log_2 \left( \frac{\zLPc}{\zIc}\right)\right\rceil$.
\item $\rank_{\S^k}(Q) \geq {\cred \sup_{c \in \R_+^n}} \left\lceil\frac{\log_2 \left( \frac{\zLPc}{\zIc}\right)}{\log_2 (\min \{ 2^k,n \}+1)}\right\rceil$ for any $k \in \Z_+, k \geq 1$.
\item $\rank_{\L^k}(Q) \geq {\cred \sup_{c \in \R_+^n}} \left\lceil\frac{\log_2 \left( \frac{\zLPc}{\zIc}\right)}{\log_2 (\min \{ k,n \}+1)}\right\rceil$ for any $k \in \Z_+, k \geq 2$.
\end{enumerate}
\end{corollary}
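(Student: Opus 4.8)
The plan is to apply Proposition~\ref{thm:thm1} separately to each of the three closures $\M \in \{\S, \S^k, \L^k\}$. Proposition~\ref{thm:thm1} requires exactly two hypotheses—packing invariance and constant approximation—and both have already been established for precisely these three families in the preceding theorems. Consequently the entire argument reduces to assembling those pieces and substituting the relevant constants, which is why the corollary is genuinely immediate.

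First I would invoke Theorem~\ref{thm:thm4}, which asserts that each of $\S$, $\S^k$, and $\L^k$ is packing invariant; this discharges the first hypothesis of Proposition~\ref{thm:thm1}. Next I would invoke Theorem~\ref{thm:thm5}, which supplies the constant approximation property together with explicit values of the approximation factor, namely $\alpha_\S = 2$, $\alpha_{\S^k} = \min\{2^k,n\}+1$, and $\alpha_{\L^k} = \min\{k,n\}+1$. With both hypotheses verified for each family, Proposition~\ref{thm:thm1} applies to each closure on any well-behaved packing set $Q$.

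It then remains to specialize the bound
$$\rank_{\M}(Q) \geq \sup_{c \in \R_+^n} \left\lceil \frac{\log_2\left(\frac{\zLPc}{\zIc}\right)}{\log_2 \alpha_\M} \right\rceil$$
to each choice of $\M$. For $\M = \S$ we use $\log_2 \alpha_\S = \log_2 2 = 1$, so the denominator collapses and we obtain part~1. For $\M = \S^k$, substituting $\alpha_{\S^k} = \min\{2^k,n\}+1$ into $\log_2 \alpha_\M$ yields part~2, and for $\M = \L^k$, substituting $\alpha_{\L^k} = \min\{k,n\}+1$ yields part~3. Since every ingredient has already been proven, there is no substantive obstacle: the only point requiring any attention is the trivial simplification $\log_2 2 = 1$ in the split case, which accounts for the absence of a logarithmic normalization in the first bound. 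This is exactly why the proof can be safely omitted as a direct application of Proposition~\ref{thm:thm1}.
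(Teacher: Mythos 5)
Your proposal is correct and matches the paper exactly: the authors explicitly omit the proof of Corollary~\ref{cor:cor1} on the grounds that it is a direct application of Proposition~\ref{thm:thm1} once Theorems~\ref{thm:thm4} and~\ref{thm:thm5} supply packing invariance and the constants $\alpha_\S = 2$, $\alpha_{\S^k} = \min\{2^k,n\}+1$, $\alpha_{\L^k} = \min\{k,n\}+1$. Your substitution of these values, including the simplification $\log_2 2 = 1$ for the split case, is precisely the intended argument.
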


Corollary \ref{cor:cor1} shows that if the integrality gap is high, then we cannot expect the split rank, the multi-branch split rank or the lattice-free rank of a well-behaved packing set to be low. 

To the best of our knowledge, the only other paper analyzing the rank of general lattice-free closures is \cite{averkov2017approximation}, and the only papers presenting lower bounds on the rank of multi-branch split closure for very special {\cred kinds} of polytopes are \cite{dash2013t} and \cite{li:ri:2008}. We note that none of these bounds are related to the {\cred integrality} gap.

There have been a number of papers giving lower bounds on split ranks such as \cite{BasuCM12, bodur2017cutting, cook:ka:sc:1990, dey:lowerbnd:2009, DeyL11} and bounds on a closely related concept, the {\cred reverse} split rank \cite{ConfortiPSFSpli15}. To the best of our knowledge, this is the first work connecting the integrality gap to the split rank. 
We note that the first part of Corollary \ref{cor:cor1} can be seen as a generalization of the result given in \cite{PokuttaS11} for the CG rank.

The lower bound on the split rank given in Corollary \ref{cor:cor1} is tight within a constant factor as formally stated below.

\begin{proposition}
\label{prop:prop1}
There exists a well-behaved packing polyhedron $Q$ and a nonnegative objective function $c$ such that $\rank_{\S}(Q) \le O\left(\textup{log}_2\left( \frac{\zLPc}{\zIc}\right)\right)$.
\end{proposition}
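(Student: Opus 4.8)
\textbf{Proof proposal for Proposition \ref{prop:prop1}.}

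The plan is to exhibit an explicit family of well-behaved packing polyhedra whose integrality gap grows exponentially in the number of rounds of split cuts needed to reach the integer hull, so that the split rank is $O(\log_2(\zLPc/\zIc))$. Since Corollary \ref{cor:cor1} already gives the matching lower bound $\rank_\S(Q) \ge \lceil \log_2(\zLPc/\zIc)\rceil$, finding any family realizing a logarithmic upper bound certifies tightness up to a constant. The most natural candidate is a single-row packing polytope of the form $P = \{x \in \R^n_+ \mid a^\top x \le b\}$ whose gap is well understood, for instance the fractional knapsack-type set $\{x \in [0,1]^n \mid \ones^\top x \le n/2 + \tfrac12\}$ scaled appropriately, or a simplex-like set $\{x \in \R^n_+ \mid \sum_j x_j \le k + \tfrac12\}$. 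For such a set the LP optimum over $c = \ones$ is roughly $k + \tfrac12$ while the integer optimum is $k$, giving a \emph{constant} gap; to force the gap to be large we instead want a set where the LP value is a large multiple of the integer value.

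Concretely, I would take $Q = \{x \in \R^n_+ \mid \ones^\top x \le \tfrac{n}{2}\}$ with $c = \ones$, after checking well-behavedness (here $A_{ij} = 1 \le b = n/2$ for $n \ge 2$). The integer optimum is $\lfloor n/2 \rfloor$ and the LP optimum is $n/2$, which again is only a constant gap, so this particular normalization is too weak. The correct construction must make $\zLPc / \zIc$ genuinely large, which points to a polytope whose vertices are ``deep'' fractional points far from any integer feasible point. I would therefore use a product or tensor construction: take $d$ independent copies of a small gadget each contributing a multiplicative factor $\gamma > 1$ to the gap while requiring only $O(1)$ rounds of split cuts to resolve, so that the total gap is $\gamma^d$ and the total split rank is $O(d) = O(\log_\gamma(\gamma^d)) = O(\log_2(\zLPc/\zIc))$. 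The gadget I have in mind is a low-dimensional packing set (e.g. the set $\{x \in \R^2_+ \mid x_1 + x_2 \le 1,\ \text{plus a tilted facet}\}$) whose split rank is a fixed constant but whose integrality gap is a fixed constant $\gamma > 1$; multiplicativity of the gap under disjoint-variable products, together with subadditivity of split rank under such products, then yields the claim.

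The two facts I must verify for the gadget-product argument are: (i) \emph{the integrality gap is multiplicative} under the disjoint-variable product $Q_1 \times Q_2$ with objective $(c_1, c_2)$, which follows because both the LP and the integer program decompose into independent subproblems so $z^{LP}$ and $z^{I}$ each factor as products; and (ii) \emph{the split rank is at most additive} under the same product, i.e. $\rank_\S(Q_1 \times Q_2) \le \rank_\S(Q_1) + \rank_\S(Q_2)$, which follows because split cuts valid for $Q_1$ (lifted trivially in the $Q_2$ variables) remain split cuts for the product, so one can resolve the first factor and then the second. Combining (i) and (ii) over $d$ copies of a single gadget with constant gap $\gamma$ and constant rank $r$ gives $\zLPc/\zIc = \gamma^d$ and $\rank_\S \le r d = r \log_\gamma(\gamma^d) = \frac{r}{\log_2 \gamma}\,\log_2(\zLPc/\zIc) = O(\log_2(\zLPc/\zIc))$.

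The main obstacle will be designing a single gadget that \emph{simultaneously} has integrality gap strictly bigger than $1$ and has a split rank I can provably bound by a constant (ideally rank $1$); many natural two- or three-dimensional packing polytopes have either gap $1$ or unbounded-looking structure, so the gadget must be chosen and its integer hull computed explicitly. I expect to pick the smallest dimension in which a well-behaved packing polytope with a fractional vertex of value exceeding its integer optimum by a constant factor exists, exhibit one split disjunction (or a constant number) that already cuts the relevant fractional vertices down to the integer hull, and then verify well-behavedness of the product (which is preserved since each coordinate inequality in the product still has $A_{ij} \le b_i$). Once the gadget is pinned down, the product argument via (i) and (ii) is routine.
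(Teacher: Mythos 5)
There is a fatal error in step (i) of your gadget-product plan. For a linear objective $(c_1,c_2)$ over a disjoint-variable product $Q_1 \times Q_2$, the optimization \emph{decomposes into independent subproblems}, exactly as you say --- but that means the optimal values \emph{add}: $z^{LP}(Q_1\times Q_2) = z^{LP}(Q_1) + z^{LP}(Q_2)$ and likewise for $z^I$. They do not multiply. Consequently, if each of your $d$ gadget copies has gap $\gamma$, the product has $z^{LP} = d\,z^{LP}_{\mathrm{gadget}}$ and $z^I = d\,z^I_{\mathrm{gadget}}$, so the integrality gap of the product is still exactly $\gamma$, not $\gamma^d$. The whole amplification collapses: you would end up with a family of constant gap and (at best) constant rank, which says nothing about the regime where $\log_2(z^{LP}/z^I)$ grows. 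Your step (ii) (split rank of a product is controlled by the ranks of the factors, since splits valid for one factor lift trivially) is fine, but it cannot rescue a construction whose gap never grows. To make a product-style amplification work you would need an objective or a coupling that behaves multiplicatively (as in graph products under the strong product for the independence number), and that takes you outside the Cartesian-product-of-packing-polyhedra framework you set up.

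The paper avoids products entirely: it takes the single polytope $P_P = \{x \in \R_+^n \mid x_i + x_j \le 1,\ i<j\}$ (the edge relaxation of the stable set polytope of $K_n$) with $c=\ones$, for which $z^{LP} = n/2$ and $z^I = 1$, and invokes the known fact that the CG rank of the clique inequality $\ones^\top x \le 1$ is $\lceil \log_2(n-1)\rceil$; since every CG cut is a split cut, the split rank is at most the CG rank, giving $\rank_\S \le O(\log_2(z^{LP}/z^I))$. Note that this example realizes a large gap in a \emph{single} high-dimensional polytope whose integer hull has only one nontrivial facet --- precisely the kind of "deep fractional vertex far from any integer point" you were looking for, but obtained by increasing the dimension rather than by taking products of a fixed low-dimensional gadget.
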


The proof of Proposition \ref{prop:prop1} is given in Section~\ref{subsubsec:4.5.4}.

%###################%###################%###################
%###################%###################%###################%###################%###################%###################
\subsection{Covering}

We now state our results for covering {\cred polyhedra}. All the proofs regarding the covering case are given in Section~\ref{sec:Covering}.

\begin{theorem}
\label{thm:mainAppCovering}
Let $P_C$ be well-behaved. Then, the followings hold:
\begin{itemize}
\item[(i)] $\S(P_C)$ is a well-behaved covering polyhedron. 
\item[(ii)] $\frac{1}{2} P_C   \seq \S(P_C) $. 
\end{itemize} 
Moreover, the bound given in (ii) is tight, i.e., for every $\epsilon >0$, there exists a well-behaved covering polyhedron $\tilde{P}_C$ such that $\frac{1}{2-\epsilon} \tilde{P}_C  \not \seq \S(\tilde{P}_C)$.
\end{theorem}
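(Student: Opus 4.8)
My plan is to treat the three assertions separately, with the approximation bound (ii) as the core. For the structural statement (i), polyhedrality of $\S(P_C)$ is classical (the split closure of a rational polyhedron is a rational polyhedron), and $\S(P_C)\seq P_C$ is immediate since $\conv(P_C\setminus S)\seq P_C$ for every split $S$. To see that $\S(P_C)$ is of covering type I would compute its recession cone: because $P_C$ has recession cone $\R^n_+$, for a fixed split $S(\pi,\pi_0)$ the two pieces $P_C\cap\{\pi^\top x\le\pi_0\}$ and $P_C\cap\{\pi^\top x\ge\pi_0+1\}$ have recession cones $\R^n_+\cap\{\pi^\top d\le0\}$ and $\R^n_+\cap\{\pi^\top d\ge0\}$, whose union is all of $\R^n_+$; hence $\conv(P_C\setminus S)$ has recession cone $\R^n_+$, and intersecting over all splits preserves this. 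A polyhedron contained in $\R^n_+$ with recession cone exactly $\R^n_+$ is precisely a covering polyhedron. For \emph{well-behavedness} I would use a capping argument: if a facet $a^\top x\ge\beta$ of $\S(P_C)$ has $a\ge\zeros$, $\beta>0$ but $a_j>\beta$ for some $j$, then replacing $a_j$ by $\beta$ yields a dominating inequality (valid using $\S(P_C)\seq P_C$ and the well-behavedness of $P_C$), contradicting facet-definingness. I expect this capping step to be the only subtle point of (i), since $\S(P_C)$ has fractional points and the domination must be read off from the way the split inequality is generated.

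For (ii) it suffices to prove that for every split $S(\pi,\pi_0)$ and every $x\in P_C$ one has $2x\in\conv(P_C\setminus S)$; intersecting over all splits, this says exactly that $\S(P_C)$ is a $2$-approximation of $P_C$, which is the content of (ii). First $2x\in P_C$ since $A(2x)=2Ax\ge2b\ge b$, so we may assume $\pi_0<\pi^\top(2x)<\pi_0+1$. Writing $s=\pi^\top x$ and using that $\theta x\in P_C$ exactly for $\theta\ge\theta^\ast:=\max_i b_i/(Ax)_i\le1$, the clean case is $\pi_0\ge1$: then $2s<\pi_0+1\le2\pi_0$ forces $s<\pi_0$, so the ray $\{\theta x\}$ meets $\{\pi^\top x=\pi_0\}$ at $\theta_-=\pi_0/s\in(1,2)$ and $\{\pi^\top x=\pi_0+1\}$ at $\theta_+=(\pi_0+1)/s>2$, both in $P_C\setminus S$ and sandwiching $2x$. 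This already pins the constant at $2$, since $2s<\pi_0+1\le2\pi_0$ used $\pi_0\ge1$ and fails for a smaller factor when $\pi_0=1$. The remaining splits $\pi_0\in\{0,-1\}$ (identified by $(\pi,\pi_0)\mapsto(-\pi,-\pi_0-1)$) require a low-side point \emph{below} $2x$: when $\pi\ge\zeros$, zeroing the coordinates where $\pi$ is positive gives $x'$ with $\pi^\top x'\le0$, and $A_{ij}\le b_i$ together with $\pi^\top x<\tfrac12$ yields $Ax'\ge\tfrac12 b$, so that $u:=2x'\in P_C$ lies on the low side (note that here too the rescaling needs exactly the factor $2$); then $2x=(1-2s)\,u+2s\,w$ for the high-side point $w=u+\tfrac{1}{2s}(2x-u)\in P_C$, a genuine convex combination of points of $P_C\setminus S$.

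The \textbf{main obstacle} I anticipate is precisely the general boundary case $\pi_0\in\{0,-1\}$ with $\pi$ of mixed sign: there the simple recipes (moving $2x$ along a single coordinate, or zeroing/rescaling the positive part of $x$) can leave $P_C$ or violate nonnegativity, and a uniform construction of a feasible low-side representative that makes $2x$ a true convex combination with a high-side point seems to be where the real work lies — presumably the reason the split case needs more care than the multi-branch case. Finally, for \emph{tightness} I would use the well-behaved family $P_C^\gamma=\{x\in\R^2_+: x_1+\gamma x_2\ge1,\ \gamma x_1+x_2\ge1\}$ with rational $\gamma\uparrow1$ (all coefficients at most $1$). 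Summing the two constraints gives $x_1+x_2\ge\tfrac{2}{1+\gamma}>1$, so the split $S(\ones,1)$ has empty low side and its cut is $x_1+x_2\ge2$; hence $\S(P_C^\gamma)\seq\{x:x_1+x_2\ge2\}$. Taking the vertex $x^\ast=\tfrac{1}{1+\gamma}\ones$, the scaled point $c\,x^\ast$ has $\ones^\top(c\,x^\ast)=\tfrac{2c}{1+\gamma}<2$ whenever $c<1+\gamma$, so $c\,x^\ast\notin\S(P_C^\gamma)$; letting $\gamma\to1$ shows the factor $2$ cannot be replaced by anything smaller, and verifying the closure for this family is a routine computation.
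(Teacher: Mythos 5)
Your proposal is incomplete at the two points where the real content of the theorem lies. First, for the well-behavedness of $\S(P_C)$ in part (i), you assert that capping a coefficient $a_j>\beta$ down to $\beta$ yields a dominating inequality that is ``valid using $\S(P_C)\seq P_C$ and the well-behavedness of $P_C$,'' but neither fact implies validity of the capped inequality on the fractional points of $\S(P_C)$: for a point with $0<x_j<1$ the capped inequality is coordinatewise weaker and nothing you have written forces it to remain valid. You flag this as ``the only subtle point,'' and indeed it is where the work is: the paper's Proposition \ref{prop:CoveringSplitWell} has to manipulate the Farkas certificates of the cut on \emph{both} sides of the disjunction simultaneously, normalize so that $\sigma^2_1=0$, and in one case perturb the split direction to $\tilde\pi=\pi-e_1$ to show the cut can be strictly improved. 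Your one-sentence capping claim is a placeholder for that argument, not a proof of it.

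Second, for part (ii) your primal strategy (show $2x\in\conv(P_C\setminus S)$ directly) is attractive and correctly disposes of all splits with $\pi_0\ge 1$ or $\pi_0\le -2$ via the ray $\{\theta x\}$, and of $\pi_0\in\{0,-1\}$ with sign-definite $\pi$ via the zeroing construction. But you explicitly leave open $\pi_0\in\{0,-1\}$ with $\pi$ of mixed sign, and that case is not degenerate: for $P_C=\{x\in\R^2_+ \mid x_1\ge 1\}$, $\pi=(1,-1)$, $\pi_0=0$, $x=(5,4.8)$, zeroing out the coordinates where $\pi$ is positive leaves $P_C$, so the low-side representative must be built differently. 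The paper avoids a direct construction by going dual: a facet of $\S(P_C)$ is certified by two aggregated rows, so one may pass to a two-row relaxation $Q$, take an optimal \emph{vertex} $x^*$ of $Q$ (which has at most two nonzero coordinates), and produce the comparison point $\hat x\in Q^{\pi,\pi_0}$ by a short case analysis on $x^*_1$ and on whether $(\pi,\pi_0)=(e_1,0)$; the two-nonzero-coordinate structure is what tames the mixed-sign case. Your tightness family $\{x\in\R^2_+ \mid x_1+\gamma x_2\ge 1,\ \gamma x_1+x_2\ge 1\}$ with $\gamma\uparrow 1$ is correct and a legitimate alternative to the paper's $n$-dimensional example, but with the two gaps above the proofs of (i) and (ii) are not complete as written.
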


Regarding part (i) of Theorem \ref{thm:mainAppCovering}, $\S(P_C)$ is known to be a rational polyhedron since $P_C$ is assumed to be a rational polyhedron~{\cred \cite{cook:ka:sc:1990}}, and it is straightforward to show that the split closure is of covering type (Proposition \ref{prop:CoveringPoly}); whereas its well-behavedness can be proven by showing that each split cut that violates the well-behavedness property is dominated by a well-behaved split cut (Proposition \ref{prop:CoveringSplitWell}). Proof of part (ii) follows from a case analysis that gives the correct factor of {\cred $\frac{1}{2}$} (Proposition \ref{prop:covering2approx}). For the last statement in the theorem, we provide a tight example in Proposition \ref{prop:LBAC}.

Note that a {\cred result similar} to Theorem \ref{thm:mainAppCovering} is proven in \cite{bodur2016aggregation} with respect to the {\cred aggregation closure}. However, Theorem \ref{thm:mainAppCovering} is not implied by the result of \cite{bodur2016aggregation} since for covering polyhedra, split cuts are not dominated by {\cred aggregation cuts}, see the example given in Observation \ref{obs:CoveringSplitAgg} in Appendix \ref{sec:appendix}.

Similar to the proof of Proposition \ref{thm:thm1} in the packing case, Theorem \ref{thm:mainAppCovering} yields the following lower bound on the split rank of covering polyhedra.

\begin{corollary}
\label{cor:SplitRankPolyCovering}
Let $P_C$ be well-behaved. Then, 
$$\rank_{\S}(P_C) \geq {\cred \sup_{c \in \R_+^n}} \left\lceil \textup{log}_2\left( \frac{\zIc}{\zLPc}\right)\right\rceil.$$
\end{corollary}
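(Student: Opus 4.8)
\textbf{Proof plan for Corollary \ref{cor:SplitRankPolyCovering}.}

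The plan is to mirror the iterative argument that underlies Proposition \ref{thm:thm1} in the packing case, but adapted to the covering setting using Theorem \ref{thm:mainAppCovering}. The two facts I would rely on are exactly the analogues of the hypotheses of Proposition \ref{thm:thm1}: first, part (i) of Theorem \ref{thm:mainAppCovering} guarantees that if $P_C$ is a well-behaved covering polyhedron, then $\S(P_C)$ is again a well-behaved covering polyhedron, so the split closure operator keeps us inside the class of objects to which the theorem applies; second, part (ii) of Theorem \ref{thm:mainAppCovering} gives the approximation factor $\frac{1}{2} P_C \seq \S(P_C)$. The iterative application of these two facts is what drives the lower bound.

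First I would set up notation for the iterated closures. Write $P^{(0)} := P_C$ and $P^{(r)} := \S(P^{(r-1)})$ for $r \geq 1$, so that $P^{(r)}$ is the result of applying the split closure operator $r$ times. By repeated application of part (i) of Theorem \ref{thm:mainAppCovering}, each $P^{(r)}$ is a well-behaved covering polyhedron, which is precisely what is needed to legitimately apply part (ii) at every stage. Then, applying part (ii) inductively gives
$$\frac{1}{2^r} P_C \seq P^{(r)},$$
since at each step the factor $\frac{1}{2}$ compounds multiplicatively (using that $\frac{1}{2}(\frac{1}{2^{r-1}} P_C) \seq \frac{1}{2} P^{(r-1)} \seq \S(P^{(r-1)}) = P^{(r)}$, where the first inclusion follows from the inductive hypothesis scaled by $\frac12$ and the monotonicity of scaling).

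Next I would translate this containment into a bound on the optimal values. Fix any $c \in \R_+^n$. Minimizing $c^\top x$ over the containment $\frac{1}{2^r} P_C \seq P^{(r)}$, and using that scaling the feasible region of a covering problem by $\frac{1}{2^r}$ scales its optimal value by $\frac{1}{2^r}$ (for nonnegative $c$), we obtain $z^{\S^{(r)}}(c) \geq \frac{1}{2^r} \zLPc$, where $z^{\S^{(r)}}(c)$ denotes the optimal value over $P^{(r)}$. Now suppose $r = \rank_{\S}(P_C)$, so that $P^{(r)} = P_C^I$ and hence $z^{\S^{(r)}}(c) = \zIc$. Combining these gives $\zIc \geq \frac{1}{2^r}\zLPc$, i.e. $2^r \geq \frac{\zIc}{\zLPc}$, and taking $\log_2$ yields $r \geq \log_2(\zIc/\zLPc)$. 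Since $r$ is an integer, $r \geq \lceil \log_2(\zIc/\zLPc)\rceil$, and since this holds for every $c \in \R_+^n$ we may take the supremum over $c$ to conclude.

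The main obstacle, such as it is, lies in the careful handling of the scaling direction and the inequality direction, which are reversed relative to the packing case: here we minimize, the closure \emph{raises} the objective value toward $\zIc \geq \zLPc$, and the relevant ratio is $\zIc/\zLPc \geq 1$ rather than $\zLPc/\zIc$. One must verify that minimizing $c^\top x$ over $\frac{1}{\alpha} P$ scales the optimum by $\frac{1}{\alpha}$ for $c \geq 0$ and $\alpha > 0$ (immediate from $x \mapsto \frac{1}{\alpha} x$ being a bijection between $P$ and $\frac1\alpha P$), and that well-behavedness is genuinely preserved at every iterate so that part (ii) is always applicable — but both of these are exactly what Theorem \ref{thm:mainAppCovering} was set up to supply, so no substantial new work is required beyond assembling the induction.
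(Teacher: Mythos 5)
Your overall plan is exactly the paper's intended one: the corollary is proved by the same iterative argument as Proposition \ref{thm:thm1}, with Theorem \ref{thm:mainAppCovering}(i) supplying well-behavedness of every iterate so that part (ii) can be applied round after round. However, the step where you convert the containment into a bound on objective values is wrong, and the error is not merely cosmetic. From a containment $A \seq B$ one gets $\min_B c^\top x \le \min_A c^\top x$, so $\frac{1}{2^r}P_C \seq P^{(r)}$ would yield $z^{\S^{(r)}}(c) \le \frac{1}{2^r}\zLPc$, not the ``$\ge$'' you wrote; and the inequality you then carry forward, $\zIc \ge \frac{1}{2^r}\zLPc$, is vacuously true in the covering case (since $\zIc \ge \zLPc \ge 0$) and does \emph{not} imply $2^r \ge \zIc/\zLPc$ --- it implies $2^r \ge \zLPc/\zIc$, a ratio that is at most $1$. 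So the chain as written does not establish the corollary.

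The correct chain runs on objective values in the form actually established in Proposition \ref{prop:covering2approx}: for every well-behaved covering polyhedron $P$ and every $c \in \R^n_+$, $\min\{c^\top x \mid x \in P\} \ge \frac12 \min\{c^\top x \mid x \in \S(P)\}$, i.e.\ one round of the split closure raises the optimal value by at most a factor of $2$. Applying this to each iterate $P^{(i)}$ (legitimate by part (i) of Theorem \ref{thm:mainAppCovering}) gives $z^{(i)} \ge \frac12 z^{(i+1)}$, hence $\zLPc = z^{(0)} \ge \frac{1}{2^r} z^{(r)} = \frac{1}{2^r}\zIc$ when $r = \rank_{\S}(P_C)$, which is finite since $P_C$ is a rational polyhedron. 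This yields $2^r \ge \zIc/\zLPc$, and taking logarithms, the ceiling, and the supremum over $c$ gives the stated bound. Your closing paragraph shows you are aware that the directions reverse relative to the packing case, but the algebra you actually execute does not implement that reversal; once the recursion is run on $z^{(i)} \ge \frac12 z^{(i+1)}$ rather than on the scaled containment, the proof is complete and coincides with the paper's.
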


Unlike the packing case, we are unable to generalize the result of Corollary \ref{cor:SplitRankPolyCovering} for the case of $k$-lattice-free rank. The key technical argument that is a roadblock is to prove the well-behavedness of the $k$-lattice-free closure of covering polyhedron.  {\cred We do not know if the $k$-lattice-free closure of covering polyhedron is well-behaved. Note that in contrast} in the packing case, if we start from a well-behaved set and the closure is of packing type, then trivially the closure is also well-behaved.

%###########################################################
%\begin{proposition}
%\textcolor{red}{Is $\S^k(P_C)$ well-behaved?}
%\end{proposition}
%###########################################################
%\begin{proposition}
%\label{prop:CoveringLatticeFreePoly}
%$\L(P_C)$ is a covering polyhedron. 
%\end{proposition}	

%\begin{proposition}
%\textcolor{red}{Is $\L(P_C)$ well-behaved?}
%\end{proposition}

%###########################################################
%###########################################################
%%%%% --------------------------- %%%%% ---------------------------
%%%%%%%%%%%%%%%%%%%%%%%%%%%%%%%%%%%%
%%%%%%%%%%%%%%%%%%%%%%%%%%%%%%%%%%%%
%%%%%%%%%%%%%%%%%%%%%%%%%%%%%%%%%%%%
%%%%%%%%%%%%%%%%%%%%%%%%%%%%%%%%%%%%
%%%%%%%%%%%%%%%%%%%%%%%%%%%%%%%%%%%%
%%%%%%%%%%%%%%%%%%%%%%%%%%%%%%%%%%%%
%%%%%%%%%%%%%%%%%%%%%%%%%%%%%%%%%%%%
%%%%%%%%%%%%%%%%%%%%%%%%%%%%%%%%%%%%
\section{Proofs for packing problems}
\label{sec:Packing}
%%%%%%%%%%%%%%%%%%%%%%%%%%%%%%%%%%%%
We use the following observation, from \cite{bodur2016aggregation}, in some of the proofs.
\begin{observation} \label{obs:bijection}
Let $\phi:\mathbb{R}^n \rightarrow \mathbb{R}^n$ be a bijective map, let $\{S^i\}_{i \in I}$ be a collection of subsets of $\mathbb{R}^n$ and {\cred for $S \seq \mathbb{R}^n$} let $\phi(S):= \{ \phi(x) \,|\, x \in S\}$. Then $\phi\left(\bigcap_{i \in I} S^i\right) = \bigcap_{i \in I} \phi(S^i)$.
\end{observation}
%%%%%%%%%%%%%%%%%%%%%%%%%%%%%%%%%%%%
\subsection{Proof of Proposition \ref{thm:thm1}}
\label{subsec:4.1}
Let $\M$ be a collection of strict lattice-free sets which satisfies the packing invariance and
constant approximation properties. Let $Q \seq \R^n$ be a well-behaved packing set. Since $Q^I \seq \M(Q)$, we have that $e_j \in \M(Q)$ for all $j \in [n]$. Therefore, by the packing invariance property, $\M(Q)$ is also a well-behaved packing set.

{\cred Assume that the rank of the closure $\M$ is finite, as there is nothing to prove otherwise.} Let $t = \rank_{\M}(Q)$ and let $c \in \R_+^n$ be a given objective vector. Define $z^{i}$ to be the optimal objective function value of maximizing $c^\top x$ over the $i^{\text{th}}$ closure with respect to $\M$ of $Q$. Since, $\M(Q)$ is a well-behaved packing set, by induction, the $i^{\text{th}}$ closure with respect to $\M$ of $Q$ is a well-behaved packing set. Therefore, the constant approximation property guarantees that $z^i \le {\cred \alpha_\M} z^{i+1}$. Thus,
\begin{eqnarray*}
\frac{\zLPc}{\zIc} = \frac{\zLPc}{z^1} \frac{z^1}{z^2}\dots\frac{z^{t-1}}{z^{t}} \leq {(\alpha_\M)}^{t}.
\end{eqnarray*}
This implies the inequality
\begin{eqnarray*}
t = \rank_{\M}(Q) \geq \left\lceil \frac{\textup{log}_2\left( \frac{\zLPc}{\zIc}\right)}{\textup{log}_2 \alpha_\M} \right\rceil,
\end{eqnarray*}
which is the required result.
%%%%%%%%%%%%%%%%%%%%%%%%%%%%%%%%%%%%%
\subsection{Proof of Theorem \ref{thm:thm2}}
\label{subsec:4.2}
Let $\M$ be a collection of strict lattice-free sets with the following property. For $\Tset \seq [n]$, define $H[\Tset] := \{ x \in \R^n | x_j = 0, \ \forall j \in T\}$. Given $\Mset \in \M$, let 
$$M[\Tset] := ( M \cap H[\Tset] ) + \spann ( \{ e_{j} \}_{j \in T} ). $$ 
Assume that for any $\Mset \in \M$ and $T \seq [n]$, if $M[\Tset] \neq \emptyset$, then $M[\Tset] \in \M$. We will show that $\M$ is packing invariant. 

Let $Q$ be a packing set. If $Q$ is empty, then there is nothing to prove. Therefore, assume that $Q$ is nonempty.

Let $\Mset \in \M$. Let $\bpar^\top x \le \delta$ be a valid inequality for $Q^\Mset$. We will show that  this inequality is dominated by a packing type inequality valid for $\M(Q)$. Let $T = \{ j \in [n] : \bpar_j < 0\}$. If $T = \emptyset$, there is nothing to prove. So, assume that $T \neq \emptyset$. 

%By reordering variables, we assume that $\bpar_1,\dots,\bpar_t \ge 0$, and $\bpar_{t+1},\dots,\bpar_n < 0$. The statement is trivial if $t=n$, thus we now assume $t < n$.

For convenience, we define an operator $\breve{(\cdot)}$ as follows: For a given vector $u \in \R^n$,  $\breve{u} \in \R^n$ is 
	\begin{align}
\breve{u}_j = \left \{
\begin{array}{rl}
u_j, & \text{if} \ j \in [n] \setminus T \\
0, & \text{if} \ j \in T
\end{array}
\right. .  \label{eq:opBreve}
	\end{align}

We will show that $\breve \bpar^\top x \le \delta$ is a valid inequality for $\M(Q)$.
%We will show that there exists $\bar L \in \L_k$ such that $\breve \bpar^\top x \le \delta$ is a valid inequality for $Q^{\bar L}$. 
Since $\breve \bpar \in \R^n_+$ and $\{x \in \R^n_+ : \breve \bpar x \le \delta \} \subseteq \{x \in \R^n_+ : \bpar^\top  x \le \delta \}$, we obtain the required result.

%We consider the subspace $H$ of $\R^n$ defined by 
%$$H:=\{x \in \R^n : x_i = 0, \ \forall i=t+1,\dots,n\}.$$
Let $\bar Q := Q \cap H[T]$.
As $\bar Q \subseteq Q$, we have that $\bpar^\top x \le \delta$ is a valid inequality for ${\bar Q}^\Mset$.  Since $\breve \bpar^\top x = \bpar^\top x$ for every $x \in H[T]$, we obtain that 
\begin{equation}
\label{betaineqValidForQbarM}
\breve \bpar^\top x \le \delta \ \text{is a valid inequality for}  \ {\bar Q}^\Mset. 
\end{equation}

Now, we {\cred distinguish} two cases:
\begin{itemize}
\item[Case 1.] $H[\Tset] \cap \Mset = \emptyset$: In this case, we know that $\bar Q = {\bar Q}^\Mset$, thus, using \eqref{betaineqValidForQbarM}, we have that $\breve \bpar x \leq \delta$ is valid for $\bar Q$. We show that $\breve \bpar^\top x \le \delta$ is valid for $Q$, and therefore trivially for $\M(Q)$. 
Assume by contradiction that there is a point $x \in Q$ such that $\breve \bpar^\top x > \delta$.
We have $\breve \bpar^\top \breve x = \breve \bpar^\top x > \delta$.
As $Q$ is a packing set, we have $\breve x \in Q$.
Moreover, since $\breve x \in H[T]$, we have $\breve x \in \bar Q$.
Thus $\breve x$ is a vector in $\bar Q$ with $\breve \bpar^\top \breve x > \delta$, a contradiction since $\breve \bpar x \leq \delta$ is valid for $\bar Q$.
Therefore, in this case the statement is trivially satisfied.
% with $\bar L := L$ (or any other $\bar L \in \L_k$).

\item[Case 2.] $H[\Tset] \cap \Mset \neq \emptyset$: By the definition of $M[T]$, we have that $\Mset[\Tset] \neq \emptyset$ and 
$$H[\Tset] \cap \Mset = H[\Tset] \cap \Mset[T].$$
%Let $L = \{ x \in \R^n | (\pi^j)^\top x \leq \pi^j_0, \ j = 1,\hdots,k \}$. We define the set 
%$$\bar L := \{x \in \R^n |  (\breve \pi^j)^\top x \le \pi^j_0, \  j=1,\dots,k\}.$$
%Let $x \in H[T]$. We have $(\pi^j)^\top x = (\breve \pi^j)^\top  x$, thus $x$ satisfies $(\pi^j)^\top x < \pi^j_0$ if and only if it satisfies $(\breve \pi^j)^\top x < \pi^j_0$. Therefore $H \cap \int L = H \cap \int \bar L$.
Therefore, $\bar Q \setminus \Mset = \bar Q \setminus \Mset[T]$, which together with \eqref{betaineqValidForQbarM} imply that
\begin{equation}
\label{factForContradictionMT}
\breve \bpar^\top x \le \delta \ \text{is a valid inequality for} \ {\bar Q}^{\M[T]}.
\end{equation}
%Moreover, by definition of $\bar L$, we have that 
%$$\int \bar L 
% = (H \cap \int \bar L) + \span (e_{t+1}, \dots, e_n) 
%= (H \cap \int \bar L) + \spann (e_{t+1}, \dots, e_n) = (H \cap \int L) + \spann (e_{t+1}, \dots, e_n).$$
%Since $H \cap \int L \neq \emptyset$, the polyhedron $\bar L$ is full-dimensional.
%As $H \cap \int L$ contains no points in $\Z^n$, $\bar L$ is lattice-free, and so $\bar L \in \L_k$.

We now show that $\breve \bpar^\top x \le \delta$ is a valid inequality for $Q^{\Mset[T]}$.
Assume by contradiction that there is a point $x \in Q \setminus \Mset[T]$ such that $\breve \bpar^\top x > \delta$.
We have $\breve \bpar^\top \breve x = \breve \bpar^\top  x > \delta$.
As $Q$ is a packing set, we have $\breve x \in Q$.
Moreover, since $\breve x \in H[T]$, we have $\breve x \in \bar Q$.
Finally, since $x \notin \Mset[T]$, we obtain that also $\breve x \notin \Mset[T]$ by definition of $\Mset[T]$.
Thus $\breve x$ is a vector in $\bar Q \setminus \Mset[T]$ with $\breve \bpar^\top \breve x > \delta$, a contradiction to \eqref{factForContradictionMT}.
\end{itemize}

%%%%%%%%%%%%%%%%%%%%%%%%%%%%%%%%%%%%%
\subsection{Proof of Theorem \ref{thm:thm3}}
\label{subsec:4.3}

Let $\M$ be a collection of strict lattice-free sets with the following property: There exist $\alpha_\M \geq 1$ such that $P_P \seq \alpha_\M \M(P_P)$ for every well-behaved packing polyhedron $P_P$. Let $Q$ be a well-behaved packing set. We will show that $Q \seq \alpha_\M \M(Q)$.

Our strategy to prove this statement is to first construct, in Lemma \ref{prop:SantanuLemma}, a well-behaved packing polyhedron which is an inner approximation of $Q$ and {\cred can be chosen} arbitrarily close to $Q$.. Then, we apply the $\alpha_\M$ factor to this polyhedral approximation and ``transfer" it to $Q$. 

%%%%% --------------------------- %%%%% ---------------------------
\begin{lemma}
\label{prop:SantanuLemma} 
Let $\epsilon > 0$. Then, there exists a well-behaved packing polyhedron $P_\epsilon$ such that 
%$P_\epsilon \cap \Z^n = Q \cap \Z^n$ and 
$\frac{1}{1+\epsilon} Q \subseteq P_\epsilon \subseteq Q$. 
\end{lemma}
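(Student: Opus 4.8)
I work with $Q$ closed and convex; this is the relevant setting, and convexity cannot be dropped, since a non--convex down--closed $Q$ (e.g.\ the union of two coordinate segments, which is still well--behaved) admits no convex $P_\epsilon$ with $\tfrac{1}{1+\epsilon}Q \seq P_\epsilon \seq Q$. The idea is to produce a \emph{finite} set $F \seq Q$ whose down--monotone convex hull already captures the shrunken body. For finite $F \seq \R^n_+$ write $D(F) := \{\, y \in \R^n_+ : y \le z \text{ coordinatewise for some } z \in \conv(F)\,\}$; this is a packing polytope, and $D(F) \seq Q$ whenever $F \seq Q$, because $Q$ is convex and down--closed. Hence, once I find a finite $F \seq Q$ with $\tfrac{1}{1+\epsilon} Q \seq D(F)$, the set $P_\epsilon := D\big(F \cup \{e_j\}_{j\in[n]}\big)$ is a packing polytope with $\tfrac{1}{1+\epsilon}Q \seq P_\epsilon \seq Q$, and it is well--behaved because it contains every $e_j$ (these lie in $Q$ by well--behavedness).

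To reach a compact body, let $J := \{ j : \sup_{x\in Q} x_j = +\infty\}$ be the free coordinates. Down--monotonicity and convexity force the recession cone of $Q$ to be $\R^J_+$, so $Q = \hat Q \times \R^J_+$ with $\hat Q$ the projection onto the remaining coordinates; $\hat Q$ is compact, and well--behavedness gives $\max_{x\in \hat Q} x_j \ge 1$ for every non--free $j$. It therefore suffices to prove the statement for $\hat Q$ and take a product with $\R^J_+$, which preserves all three inclusions and is routine. So assume $Q$ is compact with $m_j := \max_{x\in Q} x_j \in [1,\infty)$ for all $j$.

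The core is a covering claim: \emph{for compact convex down--closed $Q$ with all $m_j>0$, there is a finite $F \seq Q$ with $\tfrac{1}{1+\epsilon} Q \seq D(F)$.} I would prove it by a local--to--global compactness argument. Fix $p \in \tfrac{1}{1+\epsilon}Q$, set $b := (1+\epsilon) p \in Q$, let $Z := \{j : p_j = 0\}$, and for each $j \in Z$ pick a witness $c^{(j)} \in Q$ attaining $c^{(j)}_j = m_j \ge 1$. For small $\rho>0$ form the convex combination $g := (1 - \rho|Z|)\, b + \rho \sum_{j \in Z} c^{(j)} \in Q$. A coordinatewise check shows $g \ge p + \rho\ones$: on the support of $p$ it uses $b_j = (1+\epsilon)p_j$ and the slack factor $\tfrac{\epsilon}{1+\epsilon}$, while on $Z$ it uses $c^{(j)}_j \ge 1$. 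Thus $p + \rho\ones \in D(F_p)$ for the finite set $F_p := \{b\} \cup \{c^{(j)}\}_{j\in Z}$, and by down--closedness the entire (orthant--open) box of radius $\rho$ about $p$ lies in $D(F_p)$. These boxes form an open cover of the compact set $\tfrac{1}{1+\epsilon}Q$; a finite subcover yields a finite $F$ (the union of the relevant $F_p$) with $\tfrac{1}{1+\epsilon}Q \seq D(F)$.

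\textbf{Main obstacle.} The delicate step is exactly the covering near the coordinate hyperplanes. The natural dilation center is $\zeros$, but $\zeros$ lies on the \emph{boundary} of $Q$, so the single inscribed point $b=(1+\epsilon)p$ fails to dominate $p+\rho\ones$ in any coordinate $j$ with $p_j=0$ (there $b_j=0$ too), and a plain finite $\epsilon$--net of $Q$ is insufficient for the same reason. The remedy is the witness points $c^{(j)}$, whose existence is guaranteed precisely by well--behavedness ($m_j\ge 1$): a vanishingly small weight on them repairs the zero coordinates while barely perturbing the others, which is what makes the local neighbourhoods genuinely orthant--open. The remaining verifications—that $D(\cdot)$ is indeed a well--behaved packing polytope and that the free--coordinate product reduction preserves the sandwich—are routine and I would relegate them to short checks.
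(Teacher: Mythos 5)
Your proof is correct, but it follows a genuinely different route from the paper's. The paper builds an \emph{outer} H-description of $\tilde Q := \frac{1}{1+\epsilon}Q$: it covers the compact set of nonnegative unit directions by neighbourhoods on which the support function $\sigma_{\tilde Q}$ varies by at most $\epsilon M/4$ (the uniform lower bound $M$ on $\sigma_{\tilde Q}$ coming from the simplex $S\subseteq Q$ guaranteed by well-behavedness), keeps the finitely many supporting halfspaces $v_i^\top x\le\sigma_{\tilde Q}(v_i)$ together with box constraints, shows the resulting polytope sits between $\tilde Q$ and $(1+\frac{\epsilon}{2})\tilde Q\subseteq Q$, and finally convexifies with $S$ to restore well-behavedness. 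You instead build an \emph{inner} V-description: finitely many points of $Q$ whose down-monotone convex hull already contains $\tilde Q$, found by covering the compact body $\tilde Q$ itself with orthant-open boxes; your witnesses $c^{(j)}$ play exactly the role of the paper's bound $M$, i.e.\ they are where well-behavedness enters to handle points near the coordinate hyperplanes. Both arguments dispose of unboundedness identically, by splitting off the recession cone generated by the unbounded coordinate directions. Your version is more elementary (no support-function continuity is invoked) and produces an explicit vertex description; the one delicate point is that your local radius $\rho_p$ must shrink with $\min_{j\notin Z}p_j$, which is harmless since compactness only requires each radius to be positive for its own center. Your side remark is also well taken: the paper's definition of a packing set does not formally require convexity or closedness, yet the lemma fails for non-convex packing sets and the paper's own proof silently treats $\tilde Q$ as a compact convex set, so your standing hypothesis matches the paper's implicit one.
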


\begin{proof}
{\cred First, consider the case that $Q$ is bounded.}  Let $\sigma_Q$ be the support function of $Q$, i.e.,
$$\sigma_Q (u) = \sup \{ u^\top x | x \in Q \},$$ 
%Since $Q$ is compact convex set, we have that $\sigma_Q(u) = u ^\top x_u$ where $x_u \in Q$.
and 
$$C^n := \{ u \in \R^n_+ \, | \, \|u\|_2 = 1 \}.$$
Also, let $\tilde{Q} = \frac{1}{1+\epsilon} Q$. We first show that there exists $M > 0$ such that 
\begin{equation}
\label{eq:sigmaLB}
\sigma_{\tilde{Q}}(u) \geq M \ \text{for all} \ u \in C^n.
\end{equation}
Let $S = \{ x \in \R^n_+ ~|~ \ones^\top x \leq 1 \}$ and $\tilde{S} = \frac{1}{1+\epsilon} S$. Since $Q$ is well-behaved, we have that $S \subseteq Q$, thus $\tilde{S} \subseteq \tilde{Q}$. Therefore, $\sigma_{\tilde{S}}(u) \leq \sigma_{\tilde{Q}}(u)$ for all $u \in C^n$. Since {\cred $\sigma_{\tilde{S}}(u) \geq \frac{1}{\sqrt{n}(1+\epsilon)}$} for all $u \in C^n$, \eqref{eq:sigmaLB} holds.

Let $\bar{M} = \max \{ \|x\|_\infty ~|~ x \in \tilde{Q} \}$. It is well-known that $\sigma_{\tilde{Q}} (\cdot)$ is continuous since $\tilde{Q}$ is a compact convex set \cite{rockafellar:1970}. Moreover, as $\|\cdot\|_2$ is also continuous, {\cred for any $u \in C^n$ and $\epsilon > 0$ there exists a neighborhood $N_u$ of $u$ (in the topology of the sphere)} such that for all $v \in N_u$ we have 
{\cred
\begin{equation}
\label{eq:contsigma}
|\sigma_{\tilde{Q}}(u) - \sigma_{\tilde{Q}}(v)| \le \frac{\epsilon M}{4}
\end{equation}
}
and
\begin{equation}
\label{eq:cont2norm}
\| u - v\|_2 \leq \frac{\epsilon M}{4 \bar{M}\sqrt{n}}.
\end{equation}
Since $C^n$ is a compact set, there exists a finite list of vectors $v_1,\hdots,v_\ell $ such that $C^n = \cup_{i=1}^\ell N_{v_i}$. Define 
\begin{equation}
\label{eq:Psetdef}
P^1_\epsilon := \{ x \in \R^n_+ ~|~ (v_i)^\top x \leq \sigma_{\tilde{Q}}(v_i), \forall i=1,\hdots,\ell, \ \text{and} \ x_i \leq \bar{M}, \forall i=1,\hdots,n \}
\end{equation}

We now show that 
\begin{equation}
\label{eq:innerapprox}
\tilde{Q} \subseteq P^1_\epsilon \subseteq (1+\frac{\epsilon}{2}) \tilde{Q} \subseteq Q.
\end{equation}
Note that the first and the last containments are straightforward. In order to show the second containment, we {\cred show} that $\sigma_{P^1_\epsilon}(u) / \sigma_{\tilde{Q}}(u) \leq 1+\frac{\epsilon}{2}$ for all $u \in C^n$. For a given $u \in C^n$, let $i \in \{ 1, \hdots,\ell\}$ such that $u \in N_{v_i}$. Observe that 
\begin{align}
\sigma_{P^1_\epsilon}(u) & \leq \sigma_{P^1_\epsilon}(v_i) + \sigma_{P^1_\epsilon}(u-v_i) \nonumber \\
& \leq \sigma_{P^1_\epsilon}(v_i) + \| u - v_i \|_2 \cdot \max_{x \in P^1_\epsilon} \{ \| x \|_2 \} \nonumber \\
& \leq \sigma_{P^1_\epsilon}(v_i) + \frac{\epsilon M}{{\cred 4} \sqrt{n} \bar{M}} \sqrt{n} \bar{M} \nonumber \\
& = \sigma_{P^1_\epsilon}(v_i) + \frac{\epsilon M}{4} \nonumber \\
%& \leq (v_i)^\top x_{v_i} + \frac{\epsilon M}{4} \nonumber \\
& \leq \sigma_{\tilde{Q}}(v_i) + \frac{\epsilon M}{4} \nonumber \\
& \leq \sigma_{\tilde{Q}}(u) + \frac{\epsilon M}{4} + \frac{\epsilon M}{4} \nonumber \\
& = \sigma_{\tilde{Q}}(u) + \frac{\epsilon M}{2},  \label{eq:fifthineq} 
\end{align}
where the first inequality is due to the subadditivity property of the support functions \cite{rockafellar:1970}, the second is due to the Cauchy-Schwartz inequality, the third follows from \eqref{eq:cont2norm} and \eqref{eq:Psetdef}, the fourth inequality is implied by the constraints defining $P_\epsilon$ in \eqref{eq:Psetdef}, and the last inequality is satisfied by \eqref{eq:contsigma}. Inequality \eqref{eq:fifthineq} can be written as
$$\frac{\sigma_{P^1_\epsilon}(u)}{\sigma_{\tilde{Q}}(u)} \leq 1+ \frac{\epsilon M}{2 \sigma_{\tilde{Q}}(u)} \leq 1+ \frac{\epsilon}{2},$$
{\cred the second inequality} follows from  \eqref{eq:sigmaLB}.

Due to \eqref{eq:innerapprox}, $P^1_\epsilon$ achieves almost all the required conditions except the fact that it may not be well-behaved. Therefore, let 
$$P_\epsilon = \conv{(P^1_\epsilon \cup S)}.$$
First, note that $\tilde{Q} \subseteq P^1_\epsilon \subseteq P_\epsilon \subseteq Q$ where the first two containments are straightforward, and the last containment follows from the fact that $S \subseteq Q$ and $P^1_\epsilon \subseteq Q$. It remains to verify that $P_\epsilon$ is a packing polyhedron which would imply that it is {\cred a} well-behaved packing polyhedron since $S \subseteq P_\epsilon$. However, observe that $P_\epsilon$ is the convex hull of the union of two packing polyhedra, and therefore it is straightforward to verify that it is a packing polyhedron.

{\cred Now suppose $Q$ is not bounded. Then we can decompose it as $Q = B + R$, where $B$ is a bounded packing set and $R$ is the recession cone of $Q$; explicitly, let $I = \{i \mid \cone(e_i) \subseteq Q\}$, so $B = Q \cap \{x \mid x_i \le 0~\forall i \in I\}$ and $R = \cone(\{e_i\}_{i \in I})$. Furthermore, let $\bar{B} = \conv(B \cup \bigcup_{i \in I} e_i)$, so that $\bar{B}$ is a \emph{well-behaved} bounded packing set; notice $Q = B + R = \bar{B} + R$. 
	
	Applying the proof above to the bounded set $\bar{B}$, we obtain a well-behaved packing \emph{polyhedron} $P_\epsilon$ satisfying $\frac{1}{1+\epsilon} \bar{B} \subseteq P_\epsilon \subseteq \bar{B}$. Then $P_\epsilon + R$ is a well-behaved packing polyhedron (the polyhedrality follows from the fact $\bar{B}$ is a polytope and $R$ is finitely generated, see for example Theorem 3.13 of~\cite{ConCorZam14b}). Finally, since $\alpha P_\epsilon + R = \alpha (P_\epsilon + R)$ for all $\alpha > 0$, $$\frac{1}{1+\epsilon} (\bar{B} + R) = \frac{1}{1 + \epsilon} \bar{B} + R \subseteq P_\epsilon + R \subseteq \bar{B} + R.$$ Since $\bar{B} + R = Q$, $P_\epsilon + R$ is the desired polyhedral approximation. This concludes the proof. 
	}

%For part (ii), let 
%$$L = \{ x \in \R^n | (\pi^j)^\top x \leq \pi^j_0, \ j = 1,\hdots,k \}.$$
%Let $Q^j = Q \cap \{ x \in \R^n | (\pi^j)^\top x \geq \pi^j_0 \}$. Without loss of generality, assume that $Q^j \neq \emptyset$ for $j = 1,\hdots, r$, and $Q^j = \emptyset$ for $j = r+1,\hdots, k$. Since $Q$ is closed, there exists $\epsilon_0 > 0$ such that $P_{\epsilon}^j := P_{\epsilon} \cap \{ x \in \R^n | (\pi^j)^\top x \geq \pi^j_0 \} = \emptyset$ for all $\epsilon \in (0,\epsilon_0]$ and $j = r+1,\hdots, k$. 

%Next, we verify that $P_{\epsilon}^j \subseteq (1+\epsilon) Q^j$ for all $j =1,\hdots,r$.

\end{proof}
%%%%% --------------------------- %%%%% ---------------------------
%Let $Q$ be a well-behaved packing set and $k \in \Z_+, k \geq 1$. Then, 
%$$Q \subseteq  (\min\{k,n\}+1) \, \L^k(Q).$$

Noting that 
$$\M(Q) = \bigcap_{\Mset \in \M} Q^\Mset,$$
it is sufficient to prove that 
\begin{equation}
\label{eq:Pkplusone}
Q  \subseteq  (\alpha_\M) \,  Q^\Mset,
\end{equation}
{\cred for an arbitrary $M \in \M$} (see Observation \ref{obs:bijection}).

Let $\epsilon > 0$ and $P_\epsilon$ be the well-behaved packing polyhedron satisfying the conditions of Lemma \ref{prop:SantanuLemma}. Then, observe that 
\begin{equation}
\label{eq:generalContainment}
\frac{1}{1+\epsilon} Q \subseteq P_\epsilon \subseteq (\alpha_\M) (P_\epsilon)^\Mset \subseteq (\alpha_\M) \,  Q^\Mset,
\end{equation}
where the first and the last containments follow due to $\frac{1}{1+\epsilon} Q \subseteq P_\epsilon \subseteq Q$, whereas the second one holds by assumption and the fact that $P_\epsilon$ is well-behaved. 

Note that \eqref{eq:generalContainment} can be written as $Q \subseteq (1+\epsilon) (\alpha_\M) \,  Q^\Mset$. Since $\epsilon$ can be arbitrarily small, we obtain that $Q \subseteq (\alpha_\M) \,  Q^\Mset$.
%%%%%%%%%%%%%%%%%%%%%%%%%%%%%%
\subsection{Proof of Theorem \ref{thm:thm4}}
\label{subsec:4.4}
Note that it is sufficient to prove the statement for $\M \in \{ \S^k, \L^k \}$ since $\S$ is a special case of $\S^k$. We will use Theorem \ref{thm:thm2} to prove this statement. That is, letting $T \seq [n]$, we will show that for every $\Mset \in \M$, we have $\Mset[\Tset] \in \M$ as well. {\cred Recall the operator $\breve{(\cdot)}$ from equation \eqref{eq:opBreve}.}
%For convenience, we define an operator $\breve{(\cdot)}$ as follows: For a given vector $u \in \R^n$, $\breve{u} \in \R^n$ is defined as 
%$$
%\breve{u}_j = \left \{
%\begin{array}{rl}
%u_j, & \text{if} \ j \in [n] \setminus T \\
%0, & \text{if} \ j \in T
%\end{array}
%\right.
%.$$ 

\begin{itemize}
\item[Case of $\S^k$:]  Consider an arbitrary element of $\S^k$ as
$$M = \bigcup_{i \in [k]} S(\pi^i,\pi^i_0).$$
Observe that 
$$M[\Tset] = \bigcup_{i \in [k]} S(\breve \pi^i,\pi^i_0).$$
If $\breve \pi^i = \zeros$, then $S(\breve \pi^i,\pi^i_0) = \emptyset$. Therefore, $M[T]$ is also a $k$-branch split set since $k$-branch split is defined to be the union of at most $k$ split sets.

\item[Case of $\L^k$:]  Consider an arbitrary element of $\L^k$ as
$$M = \{ x \in \R^n | (\pi^i)^\top x < \pi^i_0, \ i = 1,\hdots,k \} .$$
Observe that 
$$M[\Tset] = \{ x \in \R^n | (\breve \pi^i)^\top x < \pi^i_0, \ i = 1,\hdots,k \} .$$
If $\breve \pi^i = \zeros$, then either the inequality $(\breve \pi^i)^\top x < \pi^i_0$ is trivially satisfied, or $M[\Tset] = \emptyset$. Therefore, $M[T]$ is also a $k$-lattice-free set since $k$-lattice-free set is defined to be the union of at most $k$ lattice-free sets.

\end{itemize}
%%%%%%%%%%%%%%%%%%%%%%%%%%%%%%%%
\subsection{Proof of Theorem \ref{thm:thm5}}

{\cred In order to make the proofs more self-contained, we record here standard bounds on the integrality gap of well-behaved packing polyhedra, which are essentially Proposition 6 of~\cite{bodur2016aggregation}.
	
	\begin{proposition} \label{prop:intGapPack}
		Consider a well-behaved packing polyhedron $P_P = \{x \in \R^n_+ \mid (a^i)^{\top} x \le b_i,~\forall i \in [m]\}$. Then $P_P$ is a $(\min\{m,n\} + 1)$-approximation of the integer hull $P_P^I$.
	\end{proposition}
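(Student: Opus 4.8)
The plan is to establish the equivalent inequality $\zLPc \le (\min\{m,n\}+1)\,\zIc$ for every $c \in \R^n_+$, which is precisely the packing $\alpha$-approximation condition with $\alpha = \min\{m,n\}+1$. First I would dispose of the degenerate cases: if $c = \zeros$ both sides vanish, and if $\zLPc = +\infty$ then, taking a rational recession direction $d \in \R^n_+$ of $P_P$ with $c^\top d > 0$ and scaling it to an integral vector, the points $\{t\,d\}_{t \in \Z_+}$ added to $\zeros \in P_P$ (feasible since $b \ge \zeros$) are all integer feasible and show $\zIc = +\infty$ as well, so the inequality holds trivially. Hence I may assume $c \neq \zeros$ and $\zLPc < \infty$.

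Since $P_P \subseteq \R^n_+$ is pointed, a finite linear optimum is attained at a vertex; let $x^*$ be an optimal vertex, so $\zLPc = c^\top x^*$. The two workhorses are: (a) the rounded-down point $\lfloor x^* \rfloor$ is integer feasible, because $A \ge \zeros$ gives $A\lfloor x^*\rfloor \le A x^* \le b$ and $\lfloor x^* \rfloor \ge \zeros$, so $c^\top \lfloor x^* \rfloor \le \zIc$; and (b) well-behavedness gives $e_j \in P_P$ for every $j$, so each $e_j$ is an integer feasible point and therefore $\zIc \ge \max_j c_j$. Writing $x^* = \lfloor x^* \rfloor + \{x^*\}$ with fractional part $\{x^*\} \in [0,1)^n$, I would bound the fractional contribution using $c \ge \zeros$ by
\[
c^\top \{x^*\} \le \sum_{j \in \mathrm{supp}(x^*)} c_j \le |\mathrm{supp}(x^*)| \cdot \max_j c_j \le |\mathrm{supp}(x^*)| \cdot \zIc.
\]

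It remains to bound $|\mathrm{supp}(x^*)|$ by $\min\{m,n\}$, and this is where the vertex choice is essential. At a vertex of $\{x \in \R^n : x \ge \zeros,\ Ax \le b\}$ the active constraints have rank $n$; the active nonnegativity constraints $x_j \ge 0$ span only the coordinate subspace indexed by the zero coordinates, which has dimension $n - |\mathrm{supp}(x^*)|$, so at least $|\mathrm{supp}(x^*)|$ of the rows of $Ax \le b$ must be active. As there are only $m$ such rows, $|\mathrm{supp}(x^*)| \le m$, and trivially $|\mathrm{supp}(x^*)| \le n$, whence $|\mathrm{supp}(x^*)| \le \min\{m,n\}$. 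Combining with (a) and (b) yields
\[
\zLPc = c^\top \lfloor x^* \rfloor + c^\top \{x^*\} \le \zIc + \min\{m,n\}\,\zIc = (\min\{m,n\}+1)\,\zIc,
\]
as desired. The core rounding argument is routine; I expect the only real care to be needed in the degenerate and unbounded cases, and in making the ``vertex has small support'' count fully airtight in the presence of degenerate vertices and ties among the active constraints.
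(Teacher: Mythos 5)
Your proof is correct, and its core is the same as the paper's: round down an optimal LP solution to get an integer-feasible point, and use well-behavedness ($e_j \in P_P^I$, hence $\zIc \ge \max_j c_j$) to charge the rounding loss against $\zIc$. The one genuine difference is organizational: the paper proves the $(n{+}1)$- and $(m{+}1)$-approximations separately, bounding the loss for the former by $\|c\|_1 \le n\|c\|_\infty$ using an arbitrary optimal solution, and citing Proposition~6 of the aggregation-closure paper for the latter; you instead insist on an optimal \emph{vertex} and bound its support by $\min\{m,n\}$ via the rank-$n$ active-constraint count, which yields both bounds in a single self-contained argument. Your support-counting step is airtight (the active nonnegativity constraints span only an $(n-|\mathrm{supp}|)$-dimensional space, so at least $|\mathrm{supp}|$ rows of $A$ must be active), and your handling of the degenerate and unbounded cases, which the paper leaves implicit, is also fine since $\zeros \in P_P$ and any rational recession direction can be scaled to be integral.
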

	
	\begin{proof}
		It is equivalent to show that $P_P$ is both an $(m+1)$- and an $(n+1)$-approximation of $P_P^I$. The former is precisely Proposition 6 of~\cite{bodur2016aggregation}, and the latter follows from similar arguments, reproduced here. Given a cost vector $c \in \R^n_+$, we need to show that $\max\{c^\top x \mid x \in P_P\} \le (n + 1) \max\{c^\top x \mid x \in P_P^I\} =: (n+1) z^I$. Let $x^{LP}$ be an optimal solution for the left-hand side, and let $\hat{x}$ be the integer solution obtained by rounding down each component of $x^{LP}$. Since each component of the difference $x^{LP} - \hat{x}$ is in $[0,1]$, we have 
		\begin{align*}
			z^I \ge c^\top \hat{x} \ge c^{\top} x^{LP} - \|c\|_{\infty}.
		\end{align*}
		Moreover, since $P_P$ is well-behaved, all canonical vectors $e_i$ belong to $P_P^I$ and hence $z^I \ge \|c\|_{\infty}$. Adding $n$ times this lower bound to the displayed equation, we obtain $(n+1)z^I \ge c^{\top} x^{LP}$, thus proving the desired result. This concludes the proof. 
	\end{proof}

	}
	
\subsubsection{Case of $\S$}
\label{subsubsec:4.5.1}

We show that $\alpha_\S = 2$ in Proposition \ref{prop:packing2approx}. The proof of Proposition \ref{prop:packing2approx} involves a reduction to analyzing split closure of a packing polyhedron in $\R^2$, and a case analysis in $\R^2$ gives the correct factor of 2 (Lemma \ref{lem:pack_2approx}). For the last statement in the theorem, we provide a tight example in Proposition \ref{prop:pack_TightEx}.

{\cred We start with the proof of the result for the two-dimensional case.}
%% ------------
\begin{lemma}
\label{lem:pack_2approx}
Let $P_P \subseteq \R^2$ be well-behaved. Then $P_P \seq 2 \S(P_P)$.
\end{lemma}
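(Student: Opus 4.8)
The plan is to prove the equivalent set-containment form of the statement. Since, by the definition of $\alpha$-approximation, $P_P \seq 2\,\S(P_P)$ holds if and only if $\tfrac12 x^* \in \S(P_P)$ for every $x^* \in P_P$, I would fix an arbitrary $x^* = (x_1^*,x_2^*) \in P_P$, set $y := \tfrac12 x^*$, and show that $y$ survives every split, i.e.\ $y \in P_P^{\pi,\pi_0}$ for all $(\pi,\pi_0) \in \Z^2 \times \Z$. Two structural facts drive the argument: because $P_P$ is a packing set, the whole box $\mathcal B := [0,x_1^*] \times [0,x_2^*]$ lies in $P_P$ (every point dominated by $x^*$ is feasible), and because $P_P$ is well-behaved, $e_1, e_2 \in P_P$. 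These supply candidate feasible points on both sides of any split.

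Fix a split $(\pi,\pi_0)$ with $\pi \neq \zeros$ (the case $\pi = \zeros$ gives an empty split and is vacuous). If $y \notin S(\pi,\pi_0)$, then $y$ lies in one of the two closed halfspaces, so $y \in P_P \setminus S(\pi,\pi_0) \seq P_P^{\pi,\pi_0}$ and there is nothing to prove; hence I assume $\pi_0 < \pi^\top y < \pi_0 + 1$. Since replacing $(\pi,\pi_0)$ by $(-\pi,-\pi_0-1)$ leaves the split set $S(\pi,\pi_0)$ unchanged, I may assume $\pi_0 \ge 0$. The goal in each remaining case is to write $y$ as an explicit convex combination of a point of $P_P$ in $\{\pi^\top x \le \pi_0\}$ and a point of $P_P$ in $\{\pi^\top x \ge \pi_0 + 1\}$, which certifies $y \in P_P^{\pi,\pi_0}$.

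The case $\pi_0 \ge 1$ is immediate: then $\pi^\top y > \pi_0 \ge 1$ forces $\pi^\top x^* = 2\pi^\top y \ge \pi_0 + 1$, so $y = \tfrac12(\zeros + x^*)$ with $\zeros$ on the lower side and $x^*$ on the upper side. The substance is the case $\pi_0 = 0$, where $\pi^\top y \in (0,1)$. If $\pi^\top x^* \ge 1$, the same decomposition $y = \tfrac12(\zeros + x^*)$ works. Otherwise $\pi^\top x^* \in (0,1)$; since $\pi^\top y > 0$ and $y \ge \zeros$, some coordinate of $\pi$ is at least $1$, say (after relabelling) $\pi_1 \ge 1$ with $x_1^* > 0$. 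I would then split on the remaining data. If $x_1^* + x_2^* < 1$, then $x^*$ lies in the simplex $\conv\{\zeros, e_1, e_2\}$, each of whose vertices lies in $\{\pi^\top x \le 0\}$ or $\{\pi^\top x \ge 1\}$, so $y$ lies in $\conv(L \cup U)$ as well, where $L,U$ denote the two pieces of $P_P$. If instead $\pi_1 x_1^* \ge 1$, I use the opposite box corners, $y = \tfrac12\big((x_1^*,0) + (0,x_2^*)\big)$, where $(x_1^*,0)$ is on the upper side and $(0,x_2^*)$ on the lower side. In the last case $x_1^* < 1$, and I take $y = (1 - \tfrac{x_1^*}{2})\,(0, \tfrac{x_2^*}{2 - x_1^*}) + \tfrac{x_1^*}{2}\, e_1$, checking that $(0, \tfrac{x_2^*}{2 - x_1^*}) \in \mathcal B \seq P_P$ lies on the lower side and $e_1$ on the upper side, and that the weights lie in $[0,1]$.

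The main obstacle is precisely the case $\pi_0 = 0$ with $x^*$ strictly interior to the slab $\{0 < \pi^\top x < 1\}$: the ray from the origin through $x^*$ may leave $P_P$ before reaching the hyperplane $\pi^\top x = 1$, so $x^*$ itself cannot be pushed onto the upper side, and a one-parameter argument fails. The resolution is to exploit that the packing property together with well-behavedness furnish the full box $\mathcal B$ together with the unit vectors $e_1, e_2$, which provide enough feasible points on each side to reconstruct the center $y$ of $\mathcal B$. The remaining work is routine bookkeeping: verifying, in each sign pattern of $\pi$, that the chosen points genuinely lie in $P_P$ and on the correct sides of the split, which is where the constant $2$ (rather than any smaller factor) enters.
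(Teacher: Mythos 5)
Your proof is correct, but it takes a genuinely different route from the paper's. You argue primally: you fix $x^* \in P_P$ and exhibit $\tfrac12 x^*$ directly as a convex combination of points of $P_P$ lying on the two closed sides of each split, drawing the needed points from the box $[0,x_1^*]\times[0,x_2^*]$ (packing property) and from $\{\zeros,e_1,e_2\}$ (well-behavedness). The paper instead argues dually: it first invokes that $\S(P_P)$ is a finitely generated polyhedron of packing type, reduces to showing that each packing-type inequality $\bpar^\top x \le \delta$ valid for some $P_P^T$ satisfies $\bpar^\top x \le 2\delta$ on $P_P$, and then, for the maximizer $\bar x$ of $\bpar^\top x$ over $P_P$, produces a point $\hat x \in P_P^T$ with $2\bpar^\top \hat x \ge \bpar^\top \bar x$ (via $\lfloor \bar x\rfloor$, a unit vector, or a unit shift in one coordinate, with a separate treatment of vertical splits). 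Your approach buys simplicity of prerequisites: it needs neither the polyhedrality/finite generation of the split closure nor the packing-invariance of $\S$, and it yields the set inclusion $P_P \seq 2\,\S(P_P)$ directly rather than through facet-defining inequalities; the paper's inequality-based formulation, on the other hand, dovetails with the Farkas-based reduction from $\R^n$ to $\R^2$ used in Proposition~\ref{prop:packing2approx}. All of your deferred checks do go through; in particular, in your final subcase ($x_1^*+x_2^*\ge 1$ and $\pi_1 x_1^* < 1$) the lower-side condition $\pi_2 x_2^* \le 0$ follows because $\pi_1\ge 1$ and $\pi^\top x^* < 1 \le x_1^*+x_2^*$ force $\pi_2 \le 0$, and in the subcase $\pi_1 x_1^* \ge 1$ one gets $\pi_2 x_2^* = \pi^\top x^* - \pi_1 x_1^* < 0$; it would be worth spelling these two lines out when you write the argument in full.
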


\begin{proof}
{\cred
By \cite{cook:ka:sc:1990} and by Theorem \ref{thm:thm4}, the set $S(P_P)$ is a well-behaved packing polyhedron.
To show that $P_P \seq 2 \S(P_P)$, we just need to show that for all facet-defining inequalities $\bpar^\top x \le \delta$ of $S(P_P)$, the inequality $\bpar^\top x \le 2  \delta$ is valid for $P_P$.
This is trivially satisfied for the facet-defining inequalities of $S(P_P)$ of the type $x_i \ge 0$, thus it remains to be shown for the other facet-defining inequalities of $S(P_P)$.
Since $S(P_P)$ is of packing type, such facet-defining inequalities are of the form $\bpar^\top x \le \delta$ with $\bpar \in \R^2_+$.
Since the split closure is finitely generated \cite{Ave12}, each facet-defining inequality $\bpar^\top x \le \delta$ of $S(P_P)$ defines a facet of a set $P_P^T := \conv(P_P \setminus \int T)$, where $T$ is a split set.
(Note that the polyhedra $P_P^T$ are not necessarily of packing type.)
%Let $P_P^T := \conv(P_P \setminus \int T)$, where $T$ is a split set. We know that $P_P^T \seq P_P \cap \{ x \mid \bpar^\top x \leq \delta \}$.
%Since $P_P^T$ contains the origin and is contained in $P_P$, we just need to show that $P_P$ is a $2$-approximation of each set $P_P^T$.
%In the remainder of the proof we show that $P_P$ is a $2$-approximation of each set $P_P^T$.
Therefore, to complete the proof of the lemma, it suffices to show that for every split set $T$, and for every inequality $\bpar^\top x \le \delta$ with $\bpar \in \R^2_+$  valid for $P_P^T$, the inequality $\bpar^\top x \le 2  \delta$ is valid for $P_P$.
We show that
%Therefore, to complete the proof of the lemma, we just need to show that 
for every split set $T$, and for every $\bpar \in \R^2_+$, there exists $\hat x \in P_P^T$ that satisfies $\max\{\bpar^\top x \mid x \in P_P\} \le 2\bpar^\top \hat x$.
This completes the proof because $\bpar^\top \hat x \le \delta$ implies that every point in $P_P$ satisfies $\bpar^\top x \le \max\{\bpar^\top x \mid x \in P_P\} \le 2\bpar^\top \hat x \le 2 \delta$.}

{\cred Now, fix a split set $T$ and a vector} $\bpar \in \R^2_+$, and let $\bar x$ be a vector in $P_P$ that achieves $\max\{\bpar^\top x \mid x \in P_P\}$.
Since $P_P$ is a packing polyhedron, we have $\bar x \ge 0$.
We divide the proof in three main cases based on the position of vector $\bar x$.

1. In the first case we assume that $\bar x \ge (1,1)$, and we define $\hat x := \lfloor \bar x \rfloor$.
Since $P_P$ is a packing polyhedron, we have that $\hat x \in P_P \cap \Z^2 \subseteq P_P^T$.
As $\bar x \ge (1,1)$, we have $2 \hat x \ge \bar x$.
Finally, $\bpar \ge 0$ implies $2 \bpar^\top \hat x \ge \bpar^\top \bar x$ as desired.

2. In the second case we assume that $\bar x \le (1,1)$.
Since $P_P$ is well-behaved, we have that points $(1,0)$ and $(0,1)$ are in $P_P$ and therefore in $P_P^T$.
If $\bpar_1 \ge \bpar_2$, we define $\hat x := (1,0)$.
Then $2\bpar^\top \hat x = 2 \bpar_1 \ge \bpar_1 + \bpar_2$.
Since $\bpar \ge 0$ and $\bar x \le (1,1)$, we have $\bpar_1 + \bpar_2 \ge \bpar^\top \bar x$, which implies $2\bpar^\top \hat x \ge \bpar^\top \bar x$ as desired.
Symmetrically, if $\bpar_2 \ge \bpar_1$, we define $\hat x := (0,1)$, and obtain $2\bpar^\top \hat x \ge \bpar^\top \bar x$.

3. In the third case we assume that $\bar x_1 < 1$ and $\bar x_2 > 1$.
(The same argument works for the symmetric case $\bar x_2 < 1$ and $\bar x_1 > 1$.)
%We first analyze the case where $T = \{x \mid t \le x_1 \le t+1\}$ for some integer $t$.
%Note that, since $P_P$ is packing, vector $\hat x^1 := \lfloor \bar x \rfloor = (0,\lfloor \bar x_2 \rfloor)$ is in $P_P \cap \Z^2$, and therefore in $P_P^T$.

Assume first that $T$ is \emph{not} a vertical split set {\cred $\{x \mid t \le x_1 \le t+1\}$} for some integer $t$.
Define now $\hat x^1 := \lfloor \bar x \rfloor = (0,\floor{\bar x_2})$.
Since $P_P$ is {\cred a packing polyhedron}, {\cred the} vector $\hat x^1$ is in $P_P \cap \Z^2$, and therefore in $P_P^T$.
If $2\bpar^\top \hat x^1 = 2 \bpar^\top (0,\floor{\bar x_2}) \ge \bpar^\top \bar x$, then we are done, thus we now assume $2 \bpar^\top (0,\floor{\bar x_2}) \le \bpar^\top \bar x$.

Let $\hat x^2 := (\bar x_1, \bar x_2-1)$.
It can be shown that, since $T$ is not a vertical split set, the vector $\hat x^2$ is in $P_P^T$.
We show 
$2 \bpar^\top \hat x^2 = 2\bpar^\top (\bar x_1,\bar x_2 -1) \ge \bpar^\top \bar x$.
Since $\floor{\bar x_2} \ge 1$ and $\bpar_2 \ge 0$, we have 
$\bpar^\top \bar x \ge 2 \bpar_2 \floor{\bar x_2} \ge 2 \bpar_2$.
By adding $\bpar^\top \bar x$ to both sides we obtain
$2 \bpar^\top \bar x - 2 \bpar_2 \ge \bpar^\top \bar x$,
thus
$2\bpar^\top (\bar x_1,\bar x_2 -1) \ge \bpar^\top \bar x$.

Finally, assume that $T$ is a vertical split set {\cred $\{x \mid t \le x_1 \le t+1\}$} for some integer $t$.
%In this case the vector $\hat x^3 := (0, \bar x_2)$ is in $P_P^T$.
Define now $\hat x^1 := (1,0)$.
Since $P_P$ is well-behaved, {\cred the} vector $\hat x^1$ is in $P_P \cap \Z^2$, and therefore in $P_P^T$.
If $2\bpar^\top \hat x^1 = 2 \bpar^\top (1,0) \ge \bpar^\top \bar x$, then we are done, thus we now assume $2 \bpar^\top (1,0) \le \bpar^\top \bar x$.
Define $\hat x^2 := (0, \bar x_2)$ and note that $\hat x^2 \in P_P^T$ since $T$ is a vertical split set.
We show 
$2\bpar^\top\hat x^2 = 2\bpar^\top (0, \bar x_2) \ge \bpar^\top \bar x$.
Since $\bpar_1 \ge 0$ and $\bar x_1 < 1$, we have $2 \bpar_1 \bar x_1 < 2 \bpar_1$.
By summing the latter with $2 \bpar_1 \le \bpar^\top \bar x$ we obtain $2 \bpar_1 \bar x_1 \le \bpar^\top \bar x$.
By adding and subtracting $2\bpar_2 \bar x_2$ to the left-hand, we get $2 \bpar^\top \bar x - 2\bpar_2 \bar x_2 \le \bpar^\top \bar x$ which implies $2 \bpar_2 \bar x_2 \ge \bpar^\top \bar x$ as desired.
\end{proof}
%%%%%%
\begin{proposition}[$\alpha_S = 2$]
\label{prop:packing2approx}
Let  $Q \seq \R^n$ be a well-behaved packing set. Then, 
%$$z^{1\S} \leq z^{LP} \leq 2 z^{\S}.$$
$Q \seq 2 \S(Q).$
\end{proposition}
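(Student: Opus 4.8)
The plan is to combine the machinery already assembled: Theorem~\ref{thm:thm3} reduces the statement for arbitrary well-behaved packing sets to well-behaved packing \emph{polyhedra}, and Lemma~\ref{lem:pack_2approx} settles the two-dimensional polyhedral case, so the remaining content of the proposition is an $n$-dimensional-to-two-dimensional reduction. First I would invoke Theorem~\ref{thm:thm3} to assume $Q = P_P$ is a well-behaved packing polyhedron in $\R^n$. By Theorem~\ref{thm:thm4} the closure $\S(P_P)$ is again a well-behaved packing polyhedron, so to prove $P_P \seq 2\S(P_P)$ it suffices to show that every facet-defining inequality $\bpar^\top x \le \delta$ of $\S(P_P)$ yields a valid inequality $\bpar^\top x \le 2\delta$ for $P_P$. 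The facets of the form $x_i \ge 0$ are immediate after scaling, and the remaining facets have $\bpar \in \R^n_+$; using that the split closure is finitely generated, each such facet is valid for some single-split relaxation $P_P^{T}$ with $T = S(\pi,\pi_0)$, exactly as in the opening of the proof of Lemma~\ref{lem:pack_2approx}. Thus the goal becomes: for every split set $T$ and every $\bpar \in \R^n_+$, exhibit $\hat x \in P_P^T$ with $2\,\bpar^\top \hat x \ge \max\{\bpar^\top x \mid x \in P_P\}$.

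Fix such a $T$ and $\bpar$, and let $\bar x$ maximize $\bpar^\top x$ over $P_P$, so $\bar x \ge \zeros$. If $\bar x \notin \int(T)$ then $\bar x \in P_P^T$ and $\hat x := \bar x$ works trivially; hence the only real case is $\pi_0 < \pi^\top \bar x < \pi_0 + 1$, in which $\bar x$ must be rounded out of $\int(T)$ while losing at most a factor of two in the objective. Here I would pass to two dimensions and appeal to Lemma~\ref{lem:pack_2approx}. The aim is to build a well-behaved packing polyhedron $P' \seq \R^2$ together with a split set $T' \seq \R^2$ and a map relating $(P_P,T)$ to $(P',T')$, so that the point realizing the factor of two for $P'$ with respect to $T'$ --- guaranteed by Lemma~\ref{lem:pack_2approx} --- pulls back to a point of $P_P^T$ achieving $2\,\bpar^\top \hat x \ge \bpar^\top \bar x$. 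The split structure is the easy part to transport: under the aggregation $\phi(x) = \big(\sum_{j:\pi_j> 0}\pi_j x_j,\ \sum_{j:\pi_j<0}(-\pi_j)x_j\big)$ one has $\pi^\top x = \phi_1(x)-\phi_2(x)$, so $T$ becomes the split set $S((1,-1),\pi_0)$ in $\R^2$, and $\phi$ is a nonnegative map that keeps the whole picture in the nonnegative orthant.

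The main obstacle is that a single coordinate aggregation cannot simultaneously encode the split direction $\pi$ and the objective $\bpar$: unless $\bpar$ is proportional to $|\pi|$ within each sign class, the objective does not factor through $\phi$, so the reduction must be carried out relative to the fixed maximizer $\bar x$ rather than uniformly. Concretely, I expect to group the coordinates of $\bar x$ by position (for instance according to whether $\bar x_j < 1$ or $\bar x_j \ge 1$) and to record just enough two-dimensional data about $\bar x$ and $T$ so that the rounding operations available in $\R^2$ --- lowering a coordinate to cross $\pi^\top x = \pi_0$, and using the well-behavedness vectors $e_j$ to represent the upper piece --- lift back to genuine points of $P_P \setminus \int(T)$. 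Verifying this lifting is precisely where the packing (down-closedness) and well-behavedness hypotheses on $P_P$ are consumed, and I expect it to mirror the three-case split of Lemma~\ref{lem:pack_2approx} (position of $\bar x$ relative to $(1,1)$, and whether $T$ is a ``vertical'' split). Transferring the factor-two point from $\R^2$ back to $\R^n$, rather than the two-dimensional estimate itself, should be the crux of the argument.
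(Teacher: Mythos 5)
Your overall architecture matches the paper's --- reduce to polyhedra via Theorem~\ref{thm:thm3} and then to dimension two so that Lemma~\ref{lem:pack_2approx} applies --- but the step you yourself flag as the crux, transporting the factor-two witness from $\R^2$ back to $\R^n$, is exactly where the argument is missing, and the reduction you sketch does not close it. The variable-aggregation map $\phi(x)=\bigl(\sum_{j:\pi_j>0}\pi_j x_j,\ \sum_{j:\pi_j<0}(-\pi_j)x_j\bigr)$ does carry the split set to a split set in $\R^2$, but it destroys the objective: $\bpar^\top x$ is not a function of $\phi(x)$, so maximizing $\bpar$ over $P_P^{T}$ cannot be rephrased over $\conv(\phi(P_P)\setminus T')$. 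Your fallback --- grouping the coordinates of $\bar x$ by whether $\bar x_j<1$ or $\bar x_j\ge 1$ and mimicking the three cases of Lemma~\ref{lem:pack_2approx} --- is not a reduction either: the witnesses used in that lemma ($\lfloor \bar x\rfloor$, the unit vectors, the point $(\bar x_1,\bar x_2-1)$) are specific points of a two-dimensional polyhedron, and after aggregation they have no canonical preimage in $P_P\setminus \int(T)$; in particular integrality and membership in $P_P$ are not preserved when one tries to un-aggregate. As written, the proposal is a plan with an acknowledged hole rather than a proof.

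The paper closes this gap by aggregating \emph{constraints} rather than variables. Writing the Farkas certificate of the cut $\bpar^\top x\le\delta$ on the two sides of the disjunction yields multipliers $\lambda^1,\lambda^2\in\R_+^m$ such that the cut is already valid for $Q^{\pi,\pi_0}$, where $Q=\{x\ge 0 \mid (\lambda^1)^\top Ax\le(\lambda^1)^\top b,\ (\lambda^2)^\top Ax\le(\lambda^2)^\top b\}\supseteq P_P$ is a well-behaved packing polyhedron with only two nontrivial rows. Because $Q$ has two rows, for every $c\in\R^n_+$ an optimal vertex of $Q$ has at most two nonzero coordinates, so the comparison $\max\{c^\top x \mid x\in Q\}\le 2\max\{c^\top x \mid x\in Q^{\pi,\pi_0}\}$ reduces to the restriction of $Q$ to a genuine two-coordinate subspace, where the objective, the split set (now $S((\pi_1,\pi_2),\pi_0)$) and well-behavedness all survive intact and Lemma~\ref{lem:pack_2approx} applies verbatim. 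This dual aggregation is the missing ingredient in your write-up.
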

\begin{proof}
It is sufficient to prove this proposition for a packing polyhedron, $P_P$, due to Theorem \ref{thm:thm3}.
%The first inequality is trivial.
%In order to prove the second inequality, 
Let $(\pi,\pi_0) \in \Z^n \times \Z$ and let $\bpar^\top x \leq \delta$ be a valid inequality for $(P_P)^{\pi,\pi_0}$. Note that, due to Observation \ref{obs:bijection}, it is sufficient to show that $\bpar^\top x \leq 2 \delta$ is valid for $P_P$. Due to Farkas' Lemma {\cred (e.g., Theorem 3.22 in~\cite{ConCorZam14b})}, there exist ${\cred \lambda^1, \lambda^2} \in \R_+^m, \ {\cred \mu_1, \mu_2 \in \R_+}$ and ${\cred \sigma^1, \sigma^2} \in \R_+^n$ such that for any $j \in [n]$, we have
$$\bpar_j  = \sum_{i=1}^m \lambda^1_i A_{ij} + \mu_1 \pi_j - \sigma^1_j = \sum_{i=1}^m \lambda^2_i A_{ij} - \mu_2 \pi_j - \sigma^2_j.$$
Let
$$Q := \{ x \mid (\lambda^1)^\top Ax \leq (\lambda^1)^\top b, ~ (\lambda^2)^\top Ax \leq (\lambda^2)^\top b,  ~ x \geq 0\}.$$
Now, observe that $Q \supseteq P_P$. Therefore, it is sufficient to show that $\bpar^\top x \leq 2 \delta$ is valid for $Q$. We will prove that the following holds:
\begin{equation}
\label{eq:QclaimPacking}
Q \subseteq 2 Q^{\pi,\pi_0}.
\end{equation}
Since $\bpar^\top x \leq \delta$ is valid for $Q^{\pi,\pi_0}$ by the definition of $Q$, this will imply that  $\bpar^\top x \leq 2 \delta$ is valid for $Q$.
%which is a well-behaved covering polyhedron. 

In order to show that \eqref{eq:QclaimPacking} holds, we verify that 
\begin{equation}
\label{eq:minPacking}
\max \{ c^\top x \mid x \in Q\} \leq 2 \max \{ c^\top x \mid x \in Q^{\pi,\pi_0} \},
\end{equation}
for any objective vector $c \in \R_+^n$. Let $x^*$ be a vertex of $Q$ that maximizes $c^\top x$ over $Q$. As $Q$ is defined by two linear inequalities, together with non-negativities, we know that at least $n-2$ components of $x^*$ are zero, say $x^*_j = 0$ for all $j=3,\hdots,n$. We will focus on the restriction of $Q$ to the first two variables, which we denote by $Q |_{\R^2}$. 

Observe that 
\begin{equation}
\label{eq:qc1}
\max \{ c^\top x \mid x \in Q\} = \max \{ c_1 x_1 + c_2 x_2 | (x_1,x_2) \in Q |_{\R^2} \}.
\end{equation}
Moreover, we have
\begin{equation}
\label{eq:qc2}
\max \{ c^\top x \mid x \in Q^{\pi,\pi_0} \} \geq \max \{ c_1 x_1 + c_2 x_2 | (x_1,x_2) \in (Q |_{\R^2})^{\pi,\pi_0} \} 
\end{equation}
because {\cred $(Q |_{\R^2})^{\pi,\pi_0} \subseteq Q^{\pi,\pi_0} |_{\R^2}$.} %as $\{ x \in \R^n | (x_1,x_2) \in Q |_{\R^2}, x_j = 0, j = 3,\hdots,n \}$ is a face of $Q$ \cite[Equation 9]{cook:ka:sc:1990}. 

Due to \eqref{eq:qc1} and \eqref{eq:qc2}, in order to prove \eqref{eq:minPacking}, it is sufficient to only prove \eqref{eq:minPacking} in $\R^2$. Since $Q |_{\R^2}$ is well-behaved, this immediately follows from Lemma \ref{lem:pack_2approx}.
\end{proof}

%%%%%
\begin{proposition}[Tight example]
\label{prop:pack_TightEx}
For every $\epsilon >0$, there exists a well-behaved packing polyhedron $\tilde{P}_P$ such that $\tilde{P}_P \not\subseteq (2-\epsilon) \S(\tilde{P}_P)$. 
%$\frac{z^{LP}}{z^{\S}} \geq 2 - \epsilon$.
\end{proposition}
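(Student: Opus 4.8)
The plan is to exhibit an explicit family of well-behaved packing polyhedra whose split closure is provably ``weak'' in the precise sense demanded. Since Proposition~\ref{prop:packing2approx} guarantees $\tilde P_P \subseteq 2\,\S(\tilde P_P)$ in general, the tight example must push the constant arbitrarily close to $2$, so I would look for a polyhedron in which a single split operation does essentially nothing in a certain direction, forcing the factor of $2$ to be unavoidable. A natural candidate lives in $\R^2$ (by the reduction logic of Lemma~\ref{lem:pack_2approx}, the two-dimensional case already captures the worst behaviour): take something like $\tilde P_P = \{x \in \R^2_+ \mid x_1 + x_2 \le 1 + \gamma\}$ for a small parameter $\gamma > 0$, or more generally a triangle whose hypotenuse passes just above the point $(1,1)$. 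The well-behavedness condition $e_1, e_2 \in \tilde P_P$ holds as long as $A_{ij} \le b_i$, which for $x_1 + x_2 \le 1+\gamma$ is satisfied since the coefficients equal $1 \le 1+\gamma$.

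The key steps I would carry out are as follows. First, I would compute the split closure $\S(\tilde P_P)$ explicitly for the chosen family. For a small triangle straddling the lattice line, the relevant split is the one separating $\{x_1 + x_2 \le 1\}$ from the region above, or a coordinate split such as $x_2 \le 0$ versus $x_2 \ge 1$; I would identify the split set $S(\pi,\pi_0)$ that produces the strongest cut and show that intersecting over all splits yields a concrete polyhedron, most likely the integer hull or something very close to $\conv\{\zeros, e_1, e_2\} = \{x \in \R^2_+ \mid x_1 + x_2 \le 1\}$. Second, I would produce a valid inequality $\bpar^\top x \le \delta$ for $\S(\tilde P_P)$ and a point $\bar x \in \tilde P_P$ witnessing $\bpar^\top \bar x$ close to $2\delta$, so that no factor smaller than $2$ can rescale $\S(\tilde P_P)$ to contain $\tilde P_P$. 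Equivalently, and more cleanly, I would compute $\max\{c^\top x \mid x \in \tilde P_P\}$ and $\max\{c^\top x \mid x \in \S(\tilde P_P)\}$ for a well-chosen objective $c$ (for instance $c = (1,1)$) and show their ratio tends to $2$ as $\gamma \to 0$. Then, given $\epsilon > 0$, choosing $\gamma$ small enough so that this ratio exceeds $2 - \epsilon$ delivers exactly the claimed failure $\tilde P_P \not\subseteq (2-\epsilon)\,\S(\tilde P_P)$.

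The main obstacle, and the step deserving the most care, is the \emph{exact} computation of $\S(\tilde P_P)$ --- that is, verifying that no split cut (over the infinite family of integer splits $(\pi,\pi_0)$) does better than the one I single out. It is easy to check that a particular split yields a given cut, but to pin down the closure I must argue that the intersection over all splits does not shrink $\tilde P_P$ any further than claimed, which requires understanding how an arbitrary split acts on this small triangle. For a triangle whose only ``fractional mass'' lies in a thin sliver just outside $\conv\{\zeros,e_1,e_2\}$, the crucial observation is that any split either misses this sliver entirely (cutting nothing useful) or, being an integer split, cannot separate the sliver without also cutting a lattice point, so the best achievable closure is bounded below; making this dichotomy rigorous is the technical heart of the argument. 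Once the closure is identified, the ratio computation is a routine optimization of a linear objective over two explicit polygons, and the limit $\gamma \to 0$ completes the proof.
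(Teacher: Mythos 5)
Your overall framework is sound --- a two-dimensional instance, the objective $c=(1,1)$, and the equivalence between $\tilde P_P \subseteq \alpha\,\S(\tilde P_P)$ and the ratio $\max\{c^\top x \mid x\in\tilde P_P\}/\max\{c^\top x\mid x\in\S(\tilde P_P)\}\le\alpha$ for all $c\ge 0$ --- and it matches the paper's. But the concrete instance you propose does not work, and the reason is structural. Since $\S(\tilde P_P)\supseteq \tilde P_P^I$, any tight example must have integrality gap tending to $2$; for $\tilde P_P=\{x\in\R^2_+\mid x_1+x_2\le 1+\gamma\}$ one has $\tilde P_P^I=\conv\{\zeros,e_1,e_2\}$ and in fact $\tilde P_P=(1+\gamma)\,\tilde P_P^I\subseteq(1+\gamma)\,\S(\tilde P_P)$, so the containment $\tilde P_P\subseteq(2-\epsilon)\,\S(\tilde P_P)$ \emph{holds} for every $\epsilon<1-\gamma$, which is the opposite of what is needed. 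The same objection kills any triangle whose hypotenuse passes ``just above'' $(1,1)$: then $(1,1)$ is an integer point of $\tilde P_P$, which again collapses the gap. What is needed is a polyhedron whose LP relaxation reaches out toward $(1,1)$ while its integer hull stays at the unit simplex. The paper uses the quadrilateral $\tilde P_P=\{x\in\R^2_+\mid x_1+Mx_2\le M,\ Mx_1+x_2\le M\}$ with $M$ large: it is well-behaved, its LP vertex $(\tfrac{M}{M+1},\tfrac{M}{M+1})$ gives $z^{LP}\ge\tfrac{2M}{M+1}\to 2$, and the CG cut $x_1+x_2\le 1$ (obtained by adding the two constraints and rounding) gives $z^{\S}\le 1$. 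A single-constraint variant such as $x_1+x_2\le 2-\gamma$ would also do.

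A second, smaller point: the step you identify as the technical heart --- computing $\S(\tilde P_P)$ exactly over all integer splits --- is unnecessary. To certify $\tilde P_P\not\subseteq(2-\epsilon)\,\S(\tilde P_P)$ you only need an \emph{upper} bound on $\max\{c^\top x\mid x\in\S(\tilde P_P)\}$, and any single valid inequality for the closure (here one CG cut, which is a split cut) supplies it. The exact description of the closure is never required.
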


\begin{proof}
Let $\epsilon > 0$ and $M = \max\{1,\lceil \frac{2}{\epsilon}-1 \rceil \}$.
Consider the instance 
$\textup{max} \{ x_1 + x_2 \,|\, x \in \tilde{P}_P\}$, where 
\begin{align*}
\tilde{P}_P = \{x \in \R^2_+ \,|\, x_1 + Mx_2 \leq M, \ Mx_1 + x_2 \leq M \}.
\end{align*}
Note that $\tilde{P}_P$ is well-behaved.
It is sufficient to show that % Then, it is sufficient to show that $\frac{z^{LP}}{z^{\S}} \geq 2 - \epsilon$.
$\frac{z^{LP}}{z^{\S}} \ge 2 - \epsilon$ 
for this instance. 

\begin{enumerate}
\item 
%$z^{1\S} \ge \frac{2M}{M + 1}$:
$z^{LP} \ge \frac{2M}{M + 1}$: 
%Observe that the set $\{x \in \mathbb{R}^2_{+}\,|\,  x_1 + Mx_2 \leq M\}$ and the set $\{x \in \mathbb{R}^2_{+}\,|\,  Mx_1 + x_2 \leq M\}$ are integral. 
%This implies that $1\S(P_P) = P_P$.
%Therefore, $z^{1\A} = z^{LP}$, 
%or equivalently $z^{1\A} = \frac{2M}{M + 1}$.
It can be checked that the point $\bar x_1 = \bar x_2 = \frac{M}{M + 1}$ is in $\tilde{P}_P$. %$1\S(P_P)$. 
Thus, %$z^{1\S} \ge \frac{2M}{M + 1}$.
$z^{LP} \ge \frac{2M}{M + 1}$. 
\item 
$z^{\S} \le1$: 
Adding the two constraints defining $\tilde{P}_P$ we obtain the valid inequality
\begin{eqnarray*} %\label{eq:preCG}
x_1 + x_2 \le \frac{2M}{M + 1}
\end{eqnarray*}  
The corresponding CG cut is $x_1 + x_2 \leq 1$. Since each CG cut is also a split cut we obtain $z^{\S} \le 1$.
\end{enumerate}
Thus, 
%$\frac{z^{1\S}}{z^{\S}} \ge \frac{2M}{M + 1}$
$\frac{z^{LP}}{z^{\S}} \ge \frac{2M}{M + 1}$; and our choice of $M$ completes the proof. 
\end{proof}

We note that the example given in Proposition \ref{prop:pack_TightEx} is the same as the one used in \cite{bodur2016aggregation} to show that the $2$-approximation bound for the CG closure of a well-behaved packing polyhedron is tight.

\subsubsection{Case of $\S^k$}
\label{subsubsec:4.5.2}

We will show that {\cred we can choose $\alpha_{\S^k} = \min \{ 2^k,n \}+1$}. It is sufficient to prove this proposition for a packing polyhedron, $P_P$, due to Theorem \ref{thm:thm3}.

Let $P_P = \{ x \in \R^n | Ax \leq b, x \geq 0\}$ and $\pi^i \in \Z^n, \pi^i_0 \in \Z \textup{ for all } i \in [k]$. It is sufficient to prove that $(\min\{2^k,n\}+1) \, (P_P)^{\pi^1,\dots, \pi^k;\pi^1_0, \dots, \pi^k_0} \supseteq  P_P$.

Let $\bpar^\top x \leq \delta$ be a valid inequality for $(P_P)^{\pi^1,\dots, \pi^k;\pi^1_0, \dots, \pi^k_0}$. Since \\ $\zeros \in (P_P)^{\pi^1,\dots, \pi^k;\pi^1_0, \dots, \pi^k_0}$, we have $\delta \geq 0$. Therefore, it is sufficient to prove that 
$$(\min\{2^k,n\}+1) \left(  \{ x | \bpar^\top x \leq \delta \} \right) \supseteq  P_P.$$
Let $\mathcal{G} = \{ G \seq {\cred [k]} : (P_P)_G^{{\pi}^1,\dots, {\pi}^k;\pi^1_0, \dots, \pi^k_0} \neq \emptyset \}$, where $(P_P)_G^{{\pi}^1,\dots, {\pi}^k;\pi^1_0, \dots, \pi^k_0}$ is defined as %in \eqref{eq:PPG}.
$$(P_P)_G^{\pi^1,\dots, \pi^k;\pi^1_0, \dots, \pi^k_0} = P_P \medcap \left(\bigcap_{i \in G} \{ (\pi^i)^\top x \geq \pi^i_0 + 1)\} \right)\medcap \left(\bigcap_{i \in {\cred [k]} \setminus G} \{ (\pi^i)^\top x \leq \pi^i_0)\}\right).$$

By Farkas' Lemma, we know that $\bpar^\top x \leq \delta$ is valid for 
\begin{equation}
\label{eq:aggPPG}
\{ x \in \R_+^n | (\lambda^G)^\top A x \leq (\lambda^G)^\top b, (\pi^i)^\top x \geq \pi^i_0 + 1, \ \forall \ i \in G, \ (\pi^i)^\top x \leq \pi^i_0, \ \forall \ i \in {\cred [k]} \setminus G \},
\end{equation}
for some $\lambda^G \in \R_+^m$.

Let 
$$Q = \{ x \in \R_+^n | (\lambda^G)^\top A x \leq (\lambda^G)^\top b, \ \forall G \in \mathcal{G} \} $$
which is well-behaved since $P_P$ is assumed to be well-behaved. Now, observe that 
\begin{align*}
(\min\{2^k,n\}+1) \left(  \{ x | \bpar^\top x \leq \delta \} \right) & \supseteq (\min\{|\mathcal{G}|,n\}+1) \left(  \{ x | \bpar^\top x \leq \delta \} \right)  \\
& \supseteq (\min\{|\mathcal{G}|,n\}+1) Q^I \supseteq Q \supseteq P_P,
\end{align*}
where the second containment follows from \eqref{eq:aggPPG}, the third one follows from {\cred Proposition \ref{prop:intGapPack}} since $Q$ is well-behaved, and the last one is straightforward.

\subsubsection{Case of $\L^k$}
\label{subsubsec:4.5.3}

We show that {\cred we can choose $\alpha_{\L^k} = \min \{ k,n \}+1$}. It is sufficient to prove this proposition for a packing polyhedron, $P_P$, due to Theorem \ref{thm:thm3}.

Let $P_P = \{ x \in \R^n | Ax \leq b, x \geq 0\}$ and {\cred let}
$$L = \{ x \in \R^n | (\pi^j)^\top x \leq \pi^j_0, \ j = 1,\hdots,k \}$$
{\cred be lattice-free.}
Then, observe that 
\begin{equation}
\label{eq:convPPL}
(P_P)^L = \conv (P_P \setminus \int(L)) = \conv  \left(\bigcup_{j=1}^k \left\{ x \in P_P ~|~ (\pi^j)^\top x \geq \pi^j_0 \right\}  \right).
\end{equation}
Without loss of generality, assume that the set $\{ x \in P_P ~|~ (\pi^j)^\top x \geq \pi^j_0 \}$ is non-empty if $j \leq r$, and empty otherwise, for some $r$ with $1 \leq r \leq k$.

Let $\bpar^\top x \leq \delta$ be a valid inequality for $(P_P)^L$. Since the origin is contained in $(P_P)^L$, we have $\delta \geq 0$. Therefore, it is sufficient to prove that 
$$(\min\{k,n\}+1) \left(  \{ x | \bpar^\top x \leq \delta \} \right) \supseteq  P_P.$$
By equation \eqref{eq:convPPL} and Farkas' Lemma, we know that $\bpar^\top x \leq \delta$ is valid for 
\begin{equation}
\label{eq:aggPP}
\{ x \in \R_+^n | (\lambda^j)^\top A x \leq (\lambda^j)^\top b, (\pi^j)^\top x \geq \pi^j_0 \},
\end{equation}
for $j = 1,\hdots,r$ where $\lambda^j \in \R_+^m$.

Let 
$$Q = \{ x \in \R_+^n | (\lambda^j)^\top A x \leq (\lambda^j)^\top b, \ j = 1,\hdots,r \} $$
which is well-behaved since $P_P$ is assumed to be well-behaved. Now, observe that 
$$(\min\{k,n\}+1) \left(  \{ x | \bpar^\top x \leq \delta \} \right) \supseteq (\min\{k,n\}+1) Q^I \supseteq Q \supseteq P_P,$$
where the first containment follows from \eqref{eq:aggPP} and $L$ being a lattice-free set, the second one follows from {\cred Proposition \ref{prop:intGapPack}} since $Q$ is well-behaved, and the last one is straightforward.
\subsubsection{Proof of Proposition \ref{prop:prop1}}
\label{subsubsec:4.5.4}
%%%%%%%%%%%%%%%%%%%%%%%%%%%%%%%%
Let $P_P$ be the standard relaxation of the stable set polytope:
\begin{align*}
P_P = \{ x \in \R_+^{n} \,|\, x_i + x_j \le 1 \ \forall i,j \in [n], \ i < j\}.
\end{align*}
Corresponding to the clique inequality $\ones^\top x \le 1$, we optimize the all ones vector over $P_P$ and $(P_P)^I$, and obtain $z^{LP}=n/2$ and  $z^{I}=1$, respectively. %because the optimal vertex of $P_P$ is the vector with all entries equal to $1/2$.
The CG rank of the clique inequality is known to be $\lceil \log_2 (n-1) \rceil$ \cite{hartmann}, therefore it also constitutes an upper bound on the split rank.

%Consider now the clique inequality , which defines a facet of the stable set polytope.
%We only need to show that the split rank of the clique inequality is upper bounded by $O\left(\textup{log}_2\left( \frac{z^{LP}}{z^{I}}\right) \right)$. 
%, which concludes the proof.

\begin{comment}
\begin{theorem}
\label{thm:RankMultibranchProofForPacking}
Let $P_P$ be well-behaved and $k \in \Z_+, k \geq 1$. Then, 
\begin{eqnarray*}
\rank_{\S^k}(P_P) \geq \left\lceil\frac{\textup{log}_2\left( \frac{\zLPc}{\zIc}\right)}{\textup{log}_2(\min \{ 2^k,n \}+1)}\right\rceil, 
\end{eqnarray*}
for all $c \in \R_+^n$.
\end{theorem} 
\end{comment}

%############################################################
%############################################################
%#########################################################
%#########################################################
\section{Proofs for covering problems}
\label{sec:Covering}

\begin{proposition} 
\label{prop:CoveringPoly}
$\S(P_C)$ is a covering polyhedron. 
\end{proposition}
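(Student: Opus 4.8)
The goal is to prove that $\S(P_C)$ is a covering polyhedron, where $P_C = \{x \in \R^n_+ \mid Ax \ge b\}$ with $(A,b) \in \Q_+^{m\times n} \times \Q_+^m$. The plan is to show that every facet-defining inequality of $\S(P_C)$ is of covering type, i.e.\ of the form $a^\top x \ge \delta$ with $a \in \R^n_+$ (equivalently, that no valid inequality of $\S(P_C)$ needs a negative coefficient). Since $\S(P_C)$ is known to be a rational polyhedron by \cite{cook:ka:sc:1990}, and since the nonnegativity constraints $x \ge 0$ are already present, it suffices to rule out inequalities with a negative left-hand-side coefficient.

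First I would fix an arbitrary valid inequality $\bpar^\top x \ge \delta$ for $\S(P_C)$ and suppose for contradiction that some coefficient is negative, say $\bpar_k < 0$ for some $k \in [n]$. The strategy mirrors the packing argument: I would show this inequality is dominated by a covering inequality obtained by ``zeroing out'' the offending coefficient. Concretely, I would replace $\bpar_k$ by $0$ to form $\breve\bpar$ and argue that $\breve\bpar^\top x \ge \delta$ is still valid for $\S(P_C)$ and dominates $\bpar^\top x \ge \delta$ over $\R^n_+$ (since increasing the coefficient of a nonnegative variable only relaxes a $\ge$ constraint, any point satisfying $\breve\bpar^\top x \ge \delta$ also satisfies $\bpar^\top x \ge \delta$ when $x \ge 0$ — wait, the direction of domination needs care, so I would verify this carefully). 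The cleaner route is to work directly at the level of a single split: since $\S(P_C) = \bigcap_{(\pi,\pi_0)} (P_C)^{\pi,\pi_0}$, it suffices to show each split closure $(P_C)^{\pi,\pi_0}$ is of covering type, and then use that the intersection of covering polyhedra (together with $x \ge 0$) is again covering.

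For a single split set $S(\pi,\pi_0)$, I would invoke Farkas' Lemma to write any valid inequality $\bpar^\top x \ge \delta$ for $(P_C)^{\pi,\pi_0} = \conv\big((P_C \cap \{\pi^\top x \le \pi_0\}) \cup (P_C \cap \{\pi^\top x \ge \pi_0+1\})\big)$ as a consequence, on each side of the disjunction, of the system $Ax \ge b$, the split inequality, and $x \ge 0$. That is, there exist multipliers $\lambda^1,\lambda^2 \in \R^m_+$, $\mu_1,\mu_2 \ge 0$, and $\sigma^1,\sigma^2 \in \R^n_+$ with
$$\bpar_j = \sum_i \lambda^1_i A_{ij} + \mu_1 \pi_j + \sigma^1_j = \sum_i \lambda^2_i A_{ij} - \mu_2 \pi_j + \sigma^2_j.$$
If $\bpar_k < 0$, then I would construct a modified split $(\tilde\pi,\tilde\pi_0)$ with $\tilde\pi_k = 0$ and $\tilde\pi_j = \pi_j$ otherwise (exactly the commented-out construction in the analogous packing Proposition), reassign the slack multipliers to absorb the discrepancy in the $k$-th column, and obtain a dominating split cut $\tilde\bpar^\top x \ge \delta$ with $\tilde\bpar_k \ge 0$ and all other coefficients unchanged. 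Iterating over all negative coordinates yields a covering-type cut dominating the original.

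The main obstacle I anticipate is the bookkeeping in the Farkas certificate: I must ensure that after setting $\tilde\pi_k = 0$ the modified aggregation remains a valid derivation, which requires the nonnegativity of the newly introduced multipliers (the difference of the $A$-column contributions, $\sum_i(\lambda^2_i - \lambda^1_i)A_{ik}$ or its analogue), and hence a without-loss-of-generality normalization choosing which side of the disjunction has the larger aggregated $k$-coefficient. Because all data $(A,b)$ is nonnegative and $P_C \subseteq \R^n_+$, these sign conditions should work out in the covering case just as they do in the packing case, but I would check the signs explicitly since the inequality direction is reversed from packing. Rationality and polyhedrality of $\S(P_C)$ come for free from \cite{cook:ka:sc:1990}, so no additional argument is needed there.
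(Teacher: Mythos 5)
There is a genuine gap: the central mechanism you propose --- zeroing out a negative coefficient by passing to a modified split set $(\tilde\pi,\tilde\pi_0)$ with $\tilde\pi_k=0$, reassigning slack multipliers, and arguing that the resulting cut \emph{dominates} the original --- does not transfer from packing to covering. In the packing case, raising a coefficient of a nonnegative variable tightens a $\le$ inequality, so the modified cut implies the original over $\R^n_+$. In the covering case, replacing $\bpar_k<0$ by $\tilde\bpar_k\ge 0$ gives $\tilde\bpar^\top x\ge \bpar^\top x$ on $\R^n_+$, so $\tilde\bpar^\top x\ge\delta$ is a \emph{weaker} inequality: it is implied by $\bpar^\top x\ge\delta$ and does not imply it. The iteration you describe therefore cannot conclude that the hypothetical non-covering cut is redundant. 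You flag the issue yourself (``the direction of domination needs care''), but the resolution is not more careful multiplier bookkeeping; it is that the entire construction is unnecessary.

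Indeed, the Farkas certificate you already wrote down finishes the proof on its own, and this is exactly what the paper does. Since $A\ge 0$ and $\lambda^1,\lambda^2,\sigma^1,\sigma^2,\mu_1,\mu_2\ge 0$, for each $j$ one of the two expressions for $\bpar_j$ consists entirely of nonnegative terms: whichever side carries the term $\pm\mu\pi_j$ with sign agreeing with the sign of $\pi_j$. Hence $\bpar_j\ge 0$ for every $j$, and a negative coefficient can never arise; supposing $\bpar_k<0$ in your certificate forces $\pi_k$ to be simultaneously positive and negative. Two smaller points your plan omits and the paper handles: (i) if one side of the split disjunction is empty there are not two Farkas certificates --- the paper disposes of this case by noting the cut is then a CG cut, which is of covering type; and (ii) a covering polyhedron also requires a nonnegative right-hand side, which is handled by observing that any valid inequality with $\delta<0$ is dominated by $\bpar^\top x\ge 0$.
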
	
\begin{proof}
If $P_C$ is empty, then there is nothing to prove. So, assume that $P_C$ is not empty. It is known that the split closure of a polyhedron is also a polyhedron \cite{cook:ka:sc:1990}. Let $\bpar^\top x \geq \delta$ be a valid inequality for $\S(P_C)$. Then, there exists $(\pi,\pi_0) \in \Z^n \times \Z$ such that $\bpar^\top x \geq \delta$ is valid for $P_C^{\pi,\pi_0}$. If one side of the disjunction is empty, then we already know that $\bpar^\top x \geq \delta$ is of covering type as it is a CG cut. Now, assume that both sides are nonempty. Then, due to Farkas' Lemma, there exist multipliers $\lambda^1, \lambda^2 \in \R_+^m, \ \mu_1, \mu_2 \in \R_+$ and $\sigma^1, \sigma^2 \in \R_+^n$ 
for the aggregation
\begin{alignat*}{4}
& (\lambda^1) \quad  && Ax \geq b \qquad \qquad \qquad && (\lambda^2) \quad && Ax \geq b \\
& (\mu_1) && - \pi^\top x \geq -\pi_0 && (\mu_2) && \pi^\top x \geq \pi_0 +1 \\
& (\sigma^1_j) && x_j \geq 0 && (\sigma^2_j) && x_j \geq 0 \qquad \qquad j=1,\hdots,n
\end{alignat*}
such that, for any $j=1,\hdots,n$, 
\begin{equation}
\label{eq:cSplit}
\bpar_j  = \sum_{i=1}^m \lambda^1_i A_{ij} - \mu_1 \pi_j + \sigma^1_j = \sum_{i=1}^m \lambda^2_i A_{ij} + \mu_2 \pi_j + \sigma^2_j.
\end{equation}
This implies that, for any $j=1,\hdots,n$, we have $\bpar_j \geq 0$ (based on the sign of $\pi_j$, either the middle or the last expression {\cred witnesses non-negativity}). Lastly, note that if $\delta < 0$, then $\bpar^\top x \geq \delta$ is dominated by $\bpar^\top x \geq 0$, which concludes the proof.
\end{proof}

\begin{proposition}
\label{prop:covering2approx}
Let $P_C$ be well-behaved, i.e., $A_{ij} \leq b_i$ for all $i \in [m], j \in [n]$. Then, 
%$$z^{1\S} \geq z^{LP} \geq \frac{1}{2} z^{\S}.$$ 
$$z^{LP} \geq \frac{1}{2} z^{\S}.$$ 
\end{proposition}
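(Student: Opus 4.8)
The plan is to prove the equivalent statement $z^{\S} \le 2 z^{LP}$, and I would do this by establishing the pointwise claim that $P_C$ violates each covering-type split cut of $\S(P_C)$ by at most a factor of two. Precisely, by Proposition \ref{prop:CoveringPoly} every facet-defining inequality $\bpar^\top x \ge \delta$ of $\S(P_C)$ has $\bpar \ge \zeros$ and $\delta \ge 0$, and is valid for some $P_C^{\pi,\pi_0}$. I would show that then $\bpar^\top x \ge \tfrac{\delta}{2}$ is valid for all of $P_C$. Granting this for every facet, the scaled LP optimum $2 x^{LP}$ satisfies $\bpar^\top (2x^{LP}) = 2\bpar^\top x^{LP} \ge \delta$ for every facet, hence $2x^{LP} \in \S(P_C)$, giving $z^{\S} \le c^\top(2x^{LP}) = 2 z^{LP}$ as required.

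To prove the pointwise claim, I would first invoke Farkas' Lemma exactly as in the proof of Proposition \ref{prop:CoveringPoly} to certify the cut from the two sides of the disjunction: there are multipliers $\lambda^1,\lambda^2 \in \R^m_+$, $\mu_1,\mu_2 \in \R_+$ and $\sigma^1,\sigma^2 \in \R^n_+$ with $\bpar = A^\top \lambda^1 - \mu_1 \pi + \sigma^1 = A^\top \lambda^2 + \mu_2 \pi + \sigma^2$, and right-hand sides $\delta_1 = (\lambda^1)^\top b - \mu_1 \pi_0$ and $\delta_2 = (\lambda^2)^\top b + \mu_2(\pi_0+1)$ with $\delta = \min\{\delta_1,\delta_2\}$. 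Since $\sigma^1,\sigma^2 \ge \zeros$, for every $x \in P_C$ (so $x \ge \zeros$ and $Ax \ge b$), writing $p := \pi^\top x$ gives the two bounds $\bpar^\top x \ge \delta_1 - \mu_1(p - \pi_0)$ and $\bpar^\top x \ge \delta_2 - \mu_2((\pi_0+1) - p)$. I would then split on the location of $p$. If $p \le \pi_0$ the first bound yields $\bpar^\top x \ge \delta_1 \ge \delta$, and if $p \ge \pi_0 + 1$ the second yields $\bpar^\top x \ge \delta_2 \ge \delta$; both boundary cases are immediate and lose nothing.

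The only delicate case is the interior one, $\pi_0 < p < \pi_0 + 1$. Here I would take the convex combination of the two bounds with weights proportional to $(\mu_2,\mu_1)$ in order to cancel the unknown $p$; writing $s := p - \pi_0 \in (0,1)$ this produces $\bpar^\top x \ge \frac{\mu_2 \delta_1 + \mu_1 \delta_2}{\mu_1 + \mu_2} - \frac{\mu_1 \mu_2}{\mu_1 + \mu_2} \ge \delta - \frac{\mu_1 \mu_2}{\mu_1 + \mu_2}$, since the first term is a convex combination of $\delta_1$ and $\delta_2$ and hence at least $\delta$. Thus the claim reduces to the single inequality $\frac{\mu_1 \mu_2}{\mu_1 + \mu_2} \le \frac{\delta}{2}$.

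I expect this last inequality to be the main obstacle, and it is exactly where well-behavedness must be used: the hypothesis $A_{ij} \le b_i$ forces each aggregated coefficient $\sum_i \lambda^t_i A_{ij}$ to be bounded by the aggregated right-hand side $(\lambda^t)^\top b$, which constrains how large $\mu_1,\mu_2$ can be relative to $\delta$. Equivalently, and more robustly, I would reduce to $\R^2$ as in Proposition \ref{prop:packing2approx}: the two aggregated rows define a well-behaved two-constraint covering set $Q \supseteq P_C$, an optimal vertex of $\min_Q \bpar^\top x$ has at most two positive coordinates, and projecting onto those coordinates converts the interior case into a finite planar case analysis of the position of the minimizer relative to the split strip, entirely analogous to Lemma \ref{lem:pack_2approx}. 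This planar analysis is what produces the exact constant $\tfrac12$, consistent with the tightness witnessed in Proposition \ref{prop:LBAC}.
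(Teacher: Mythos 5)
Your overall skeleton (pass to facet-defining inequalities of $\S(P_C)$, certify them via Farkas' Lemma from the two sides of the disjunction, and work with the two aggregated rows) matches the paper's, but there are two genuine gaps. First, you never handle the case in which one side of the disjunction, $P_C \cap \{\pi^\top x \le \pi_0\}$ or $P_C \cap \{\pi^\top x \ge \pi_0+1\}$, is empty. In that case there is only one Farkas certificate, your ``two bounds'' degenerate to a single affine lower bound $\bpar^\top x \ge \delta_1 - \mu_1(\pi^\top x - \pi_0)$ with nothing to combine it with, and the argument gives no control at all. This is not a corner case: it is exactly the CG-cut case, it is where the factor $2$ is actually attained (the tight instance in Proposition \ref{prop:LBAC} uses a CG cut), and the paper needs a separate argument for it --- in particular the subcase $\delta = 1$, where validity of $\bpar^\top x \ge \delta/2$ is deduced from well-behavedness of the aggregated base inequality, not from the easy bound $\delta - 1 \ge \delta/2$.

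Second, in the two-sided case your convex-combination computation is correct but only reduces the problem to the inequality $\frac{\mu_1\mu_2}{\mu_1+\mu_2} \le \frac{\delta}{2}$, which you do not prove. As stated it cannot hold for an arbitrary certificate (for instance, when $\pi_0 = 0$ one can inflate $\mu_1$ and $\sigma^1_j$ simultaneously without changing $\bpar$ or $\delta_1$), so you would at least need a normalization argument, and it is not clear that well-behavedness alone delivers the bound for general $(\pi,\pi_0)$. Your fallback --- reduce to $\R^2$ and do a planar case analysis ``entirely analogous to Lemma \ref{lem:pack_2approx}'' --- is where the real work lives, and it is not analogous: the packing lemma leans on rounding down, i.e., on down-monotonicity, whereas the covering argument must construct explicit points of $Q^{\pi,\pi_0}$, most delicately when $\pi = e_1$, $\pi_0 = 0$ and the minimizing vertex has $0 < x_1^* < 1$, where the paper exhibits $\hat x = (x_1^*,\, x_2^* + x_1^* x_2^*,\, \zeros)$ as a convex combination of $(0,\, x_2^*/(1-x_1^*),\, \zeros)$ and $(1,\, x_2^*,\, \zeros)$ and justifies membership of the first point via an intercept argument that again uses well-behavedness. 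Until that case analysis is actually carried out, the proof is incomplete.
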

\begin{proof}
%The first inequality is trivial. 
Let $\bpar^\top x \geq \delta$ be a facet-defining inequality for $\S(P_C)$. Note that, due to Observation \ref{obs:bijection}, it is sufficient to show 
%$$P_C \subseteq \frac{1}{2} (P_C \cap \{ x \mid \bpar^\top x \geq \delta \}),$$
%which can be accomplished by showing 
that $\bpar^\top x \geq \frac{\delta}{2}$ is valid for $P_C$. {\cred If $\bpar^\top x \geq \delta$ is valid for $P_C$, then there is nothing to show. So, assume that it is a non-trivial inequality.}

If $\bpar^\top x \geq \delta$ is a {\cred non-trivial} CG cut, then {\cred $\delta \geq 1$. We know that the strict inequality $\bpar^\top x > \delta - 1$ is valid for $P_C$. If $\delta \geq 2$, then $\delta - 1 \geq \frac{\delta}{2}$, which implies that  $\bpar^\top x \geq \frac{\delta}{2}$ is valid for $P_C$. So, now assume that $\delta = 1$. Let $\lambda \in \R_+^m$ such that the CG cut is obtained by rounding up the coefficients and the right-hand-side of the base inequality $\lambda^\top A x \geq \lambda^\top b$, i.e., $\beta = \ceil{\lambda^\top A}$ and $\delta = \ceil{\lambda^\top b}$. As $\delta = 1$, we have $0 < \lambda^\top b \leq 1$. Thus, the scaled base inequality $(\lambda^\top A / \lambda^\top b) x \geq 1$ is valid for $P_C$. In addition, since $P_C$ is well-behaved, $\lambda^\top A_{\cdot j} \leq \lambda^\top b \ (\leq 1)$ for all $j \in [n]$ (where $A_{\cdot j}$ denotes the $j^\text{th}$ column of $A$. Then, for any $x \in P_C$,  we have 
$$\beta^\top x = \sum_{j \in [n]} \ceil{\lambda^\top A_{\cdot j}} x_j \geq \sum_{j \in [n]} \frac{\lambda^\top A_{\cdot j}}{\lambda^\top b} x_j \geq 1 \geq \frac{\ceil{\lambda^\top b}}{2} = \frac{\delta}{2},$$
which implies that is $\beta^\top x \geq \frac{\delta}{2}$ valid for $P_C$.
}

Otherwise, let $(\pi,\pi_0) \in \Z^n \times \Z$ be a corresponding vector such that $\bpar^\top x \geq \delta$ is valid for $P_C^{\pi,\pi_0}$. 
 Let
$$Q := \{ x \mid (\lambda^1)^\top Ax \geq (\lambda^1)^\top b, ~ (\lambda^2)^\top Ax \geq (\lambda^2)^\top b,  ~ x \geq 0\},$$
where $\lambda^1$ and $\lambda^2$ are the multipliers that satisfy \eqref{eq:cSplit}. Now, observe that $Q \supseteq P_C$. Therefore, it is sufficient to show that $\bpar^\top x \geq \frac{\delta}{2}$ is valid for $Q$. We will prove that the following holds:
\begin{equation}
\label{eq:Qclaim}
Q \subseteq \frac{1}{2} Q^{\pi,\pi_0}.
\end{equation}
Since $\bpar^\top x \geq \delta$ is valid for $Q^{\pi,\pi_0}$ by the definition of $Q$, this will imply that  $\bpar^\top x \geq \frac{\delta}{2}$ is valid for $Q$.
%which is a well-behaved covering polyhedron. 

In order to show that \eqref{eq:Qclaim} holds, we verify that 
\begin{equation}
\label{eq:min}
\min \{ c^\top x \mid x \in Q\} \geq \frac{1}{2} \min \{ c^\top x \mid x \in Q^{\pi,\pi_0} \},
\end{equation}
for any objective vector $c \in \R_+^n$. Let $x^*$ be a vertex of $Q$ that minimizes $c^\top x$ over $Q$. If $x^*$ belongs to $Q^{\pi,\pi_0}$, we are done. Thus, assume that $x^* \notin Q^{\pi,\pi_0}$. We will prove \eqref{eq:min} by showing that there exists a point $\hat{x} \in Q^{\pi,\pi_0}$ such that $c^\top \hat{x} \leq 2 c^\top x^*$. As $Q$ is defined by two linear inequalities, together with non-negativities, we know that at least $n-2$ components of $x^*$ are zero, say $x^*_j = 0$ for all $j=3,\hdots,n$. We will focus on this restriction of $Q$ in $\R_+^2$ in order to identify $\hat{x}$. Without loss of generality, assume that $c_1 \geq c_2$. A key observation that follows from the definition of split cuts is
\begin{equation}
\label{eq:vee}
(x^*_1+1,x^*_2,\zeros) \in Q^{\pi,\pi_0} \vee (x^*_1,x^*_2+1,\zeros) \in Q^{\pi,\pi_0}.
\end{equation} {\cred Moreover, if $\pi \neq e_1$, then $(x^*_1,x^*_2+1,\zeros) \in Q^{\pi,\pi_0}.$\\}
Now, we will consider two cases to prove \eqref{eq:min}.
\\ \\
\emph{Case 1}. $x^*_1 \geq 1$:  %Let $\hat{x} = (x^*_1+1,x^*_2,\zeros)$. Note that $\hat{x} \in Q^{\pi,\pi_0}$ by \eqref{eq:vee}. It is sufficient to show that 
%$$c^\top x^* \geq \frac{1}{2} (c^\top x^* + c_1),$$
%which is equivalent to $c^\top x^* \geq  c_1$, and holds because $c_1 \geq c_2$.
{\cred Using (\ref{eq:vee}), there are two subcases based on whether $(x^*_1+1,x^*_2,\zeros) \in Q^{\pi,\pi_0} $ or $ (x^*_1,x^*_2+1,\zeros) \in Q^{\pi,\pi_0}$. If $\hat{x} = (x^*_1+1,x^*_2,\zeros) \in Q^{\pi,\pi_0}$, then it is sufficient to show that 
$$c^\top x^* \geq \frac{1}{2} (c^\top x^* + c_1),$$
which is equivalent to $c^\top x^* \geq  c_1$, which holds because $x^*_1 \geq 1$ and $c_1, c_2, x^*_2 \geq 0$. If $\hat{x} = (x^*_1,x^*_2 + 1,\zeros) \in Q^{\pi,\pi_0}$, then it is sufficient to show that 
$$c^\top x^* \geq \frac{1}{2} (c^\top x^* + c_2),$$
which is equivalent to $c^\top x^* \geq  c_2$, which holds because $x^*_1 \geq 1$, $c_1 \geq c_2$ and $c_2, x^*_2 \geq 0$. }
 \\ \\
\emph{Case 2}. $0 \leq x^*_1 < 1$: Note that by construction, $Q$ is a well-behaved covering polyhedron. Now, consider the following two subcases:
 \\ \\	
\emph{Case 2a}. $(\pi,\pi_0) \neq (e_1,0)$: {\cred In this case as discussed above}, {\cred $\hat{x} = (x^*_1,x^*_2 + 1,\zeros) \in Q^{\pi,\pi_0}$}. It is sufficient to show that 
$$c^\top x^* \geq \frac{1}{2} (c^\top x^* + c_2),$$ 
which is equivalent to $c^\top x^* \geq c_2$.
This holds as we have $x^*_1+x^*_2 \geq 1$ since $Q$ is well-behaved {\cred and because $c_1 \geq c_2$}.
 \\ \\
\emph{Case 2b}. $(\pi,\pi_0) = (e_1,0)$: Let $\hat{x} = (x^*_1,x^*_2 + x^*_1 x^*_2,\zeros)$. We will first show that $\hat{x} \in Q^{\pi,\pi_0}$.

Figure \ref{fig:1pic} illustrates the restriction of $Q$ to the first two variables, which we denote by $Q |_{\R^2}$. Observe that $Q |_{\R^2}$ is a well-behaved covering polyhedron 
%(with at most three vertices) 
because $Q$ is a well-behaved covering polyhedron.  Note that $x_1^* > 0$ due to the assumption $x^* \notin Q^{e_1,0}$.
{\cred Since $0 < x^*_1 < 1$, and $Q |_{\R^2}$ is a well-behaved covering polyhedron, we  have that $x_1 \geq  \gamma$ cannot be a valid inequality for $Q |_{\R^2}$ where $0< \gamma \leq x^*_1$. In other words, all non-trivial inequalities $\alpha_1 x_1 + \alpha_2 x_2 \geq \theta$ defining $Q |_{\R^2}$ must have $\alpha_2 > 0$.  Therefore, there must exist a vertex of the form $(0,y)$ of $Q |_{\R^2}$.}
%Let $(0,y)$ be one of the vertices of $Q |_{\R^2}$. 

Since $(x^*_1,x^*_2)$ is a vertex of $Q |_{\R^2}$ {\cred with $x_1^* > 0$}, we know that $(0,y) \neq (x^*_1,x^*_2)$. 

Let $(h,0)$ be the intercept of the line passing through the vertices $(0,y) $ and $(x^*_1,x^*_2)$.  {\cred We first claim that $h \geq 1$. Note that the supporting hyperplane corresponding to non-trivial facet-defining inequality at $(0, y)$ (i.e., different from $x_1 \geq 0$) intersects the $x_1$-axis at a point $(\tilde{h}, 0)$ with $\tilde{h} \geq 1$ due to well-behavedness of $Q |_{\R^2}$. Since the line passing through the vertices $(0,y) $ and $(x^*_1,x^*_2)$ has an intercept at least as large as $\tilde{h}$, we obtain that $h \geq 1$.}

Then, we have
%(\lambda^1)^\top Ax \geq (\lambda^1)^\top b, ~ (\lambda^2)^\top Ax \geq (\lambda^2)^\top b
\begin{figure}[h]
\begin{center}
\begin{tikzpicture}[scale=1] 
    \draw[->] (0,0) -- (0,4.5);
    \draw[->] (0,0) -- (4.5,0);
    \draw (0,3.5) -- (1.5,1.5);
    \draw (1.5,1.5) -- (4,0);
    \draw[fill=gray,opacity=0.2] (0,4.5) -- (0,3.5) -- (1.5,1.5) -- (4,0) -- (4.5,0) -- (4.5,4.5) -- cycle;
    \draw[dotted, thick] (1.5,1.5) -- (2.635,0);
   % \coordinate  [label=below:$h$] (f) at (1.3,0);    
    \node[fill,circle,inner sep=1.1pt] at (4,0) {};
    \node[fill,circle,inner sep=1.1pt] at (1.5,1.5) {};
    \node (xstar) at (2.1,1.7) {\small $(x^*_1,x^*_2)$};
    \node[fill,circle,inner sep=1.1pt] at (0,3.5) {};
    \node (xstar) at (0.58,3.65) {\small $(0,y)$}; 
     \node (h) at (2.635,-0.3) {\small $(h,0)$}; 
     \node[inner sep=1.1pt] at (2.635,0) {\tiny x};
\end{tikzpicture}
\end{center}
\caption{The set $Q |_{\R^2}$}
\label{fig:1pic}
\end{figure}

\begin{equation}
\label{eq:heq}
\frac{y}{h} = \frac{y - x^*_2}{x^*_1} \Rightarrow h = \frac{y x^*_1}{y-x^*_2} \ .
\end{equation}
%Note that $h \geq 1$ due to well-behavedness of $Q |_{\R^2}$. This implies 
{\cred Since $h \geq 1$, we have:}
$$y x^*_1 \geq y - x^*_2 \iff \displaystyle y \leq \frac{x^*_2}{1-x^*_1} 
\Rightarrow \displaystyle (0, \frac{x^*_2}{1-x^*_1},\zeros) \in Q^{e_1,0}.$$
The last implication follows from the fact that $Q$ is a covering polyhedron, $(0,y,\zeros) \in Q$ and $(\pi,\pi_0) = (e_1,0)$.
Similarly, we have $(1,x^*_2,\zeros) \in Q^{e_1,0}$. The following convex combination of these two points yields $\hat{x}$ as
$$(1-x^*_1) (0, \frac{x^*_2}{1-x^*_1},\zeros) + x^*_1 (1,x^*_2,\zeros) = (x^*_1,x^*_2 + x^*_1 x^*_2,\zeros) = \hat{x}.$$
Finally, observe that {\cred since $c_1 \geq 0$ and $0 \leq x^*_1 \leq 1$}
%$$c^\top x^* \geq \frac{1}{2} (c^\top x^* + c_2 x^*_1 x^*_2) \iff c^\top x^* \geq c_2 x^*_1 x^*_2 \iff c^\top x^* \geq c_2 x^*_2 \geq  c_2 x^*_1 x^*_2,$$
$${\cred c^\top x^* \geq c_2 x^*_2 \geq  c_2 x^*_1 x^*_2 \Rightarrow c^\top x^* \geq \frac{1}{2} (c^\top x^* + c_2 x^*_1 x^*_2) = \frac{1}{2}c^\top\hat{x}}, $$
which completes the proof.
\end{proof}
%###########################################################
We now show that Proposition \ref{prop:covering2approx} is tight.
In order to do so, we exhibit an instance of a well-behaved covering polyhedron and a nonnegative objective function such that LP is not better than a 2-approximation of $\S$. The construction given in the following example is the same that we used in \cite{bodur2016aggregation}
%{\cred Proposition 12} in \cite{bodur2016aggregation} 
to show that our $2$-approximation bounds for \emph{1-row closure} and \emph{1-row CG closure} are tight.

\begin{proposition} 
\label{prop:LBAC}
For every $\epsilon >0$, there exists a well-behaved covering polyhedron $\tilde{P}_C$ such that $\frac{1}{2-\epsilon} \tilde{P}_C  \not \seq \S(\tilde{P}_C)$.
%$\frac{z^{\S}}{z^{LP}} \geq 2 - \epsilon$.
\end{proposition}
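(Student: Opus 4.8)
The plan is to exhibit a single explicit well-behaved covering polyhedron, in direct analogy with the packing tight example of Proposition \ref{prop:pack_TightEx}, and to show that a single split cut (in fact a Chv\'atal--Gomory cut) already pushes the objective up by a factor approaching $2$. By the equivalent objective formulation of $\alpha$-approximation recorded in Section \ref{sec:Prelim}, the set statement $\frac{1}{2-\epsilon}\tilde P_C \subseteq \S(\tilde P_C)$ is equivalent to $z^{\S} \le (2-\epsilon)\, z^{LP}$ holding for every $c \in \R^n_+$; hence it suffices to produce one instance and one objective with $\frac{z^{\S}}{z^{LP}} > 2-\epsilon$, exactly mirroring the reduction used in the packing case.

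Concretely, given $\epsilon>0$ I would set $M = \max\{1, \lceil \frac{2}{\epsilon}-1 \rceil\}$ and take
\[
\tilde P_C = \{x \in \R^2_+ \mid M x_1 + M x_2 \ge M+1\}, \qquad c = (1,1).
\]
This $\tilde P_C$ is well-behaved since $M \le M+1$. The linear relaxation value is immediate: dividing the single constraint by $M$ gives $x_1+x_2 \ge \frac{M+1}{M}$, and the point $(\frac{M+1}{M},0)$ is feasible, so $z^{LP} = \frac{M+1}{M}$.

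The core of the argument is to evaluate $z^{\S}$, and I claim $z^{\S}=2$. For the upper bound, every integer point of $\tilde P_C$ satisfies $x_1+x_2 \ge \frac{M+1}{M} > 1$ and hence $x_1+x_2 \ge 2$; therefore $(\tilde P_C)^I = \{x \in \R^2_+ \mid x_1+x_2 \ge 2\}$ with $z^I = 2$, which forces $z^{\S} \le z^I = 2$. For the matching lower bound I would use the split set $S((1,1),1)$: since $\tilde P_C \cap \{x_1+x_2 \le 1\}=\emptyset$, the down branch vanishes and $(\tilde P_C)^{(1,1),1} = \tilde P_C \cap \{x_1+x_2 \ge 2\} = \{x \in \R^2_+ \mid x_1+x_2 \ge 2\}$, so $x_1+x_2 \ge 2$ is valid for $\S(\tilde P_C)$ and $z^{\S} \ge 2$. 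Combining the two bounds gives $z^{\S}=2$, whence
\[
\frac{z^{\S}}{z^{LP}} = \frac{2M}{M+1} \ge 2-\epsilon
\]
by the choice of $M$, which by the reduction in the first paragraph yields $\frac{1}{2-\epsilon}\tilde P_C \not\subseteq \S(\tilde P_C)$.

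The step I expect to be the main obstacle is the lower bound $z^{\S}\ge 2$, namely certifying that the \emph{entire} split closure, not merely the integer hull, lies inside $\{x_1+x_2\ge2\}$. The structural fact making this clean is that the constraint forces $x_1+x_2 > 1$ strictly, so the down side $\{x_1+x_2\le 1\}$ of the disjunction on $(1,1)$ is empty and the closure collapses onto $\{x_1+x_2\ge2\}$; the remaining verifications (the value $z^{LP}$, well-behavedness, and the numerics of $M$) are routine. A secondary point to state carefully is the passage between the set-inclusion form of the claim and the objective-ratio form, which I would handle exactly as in the packing tight example of Proposition \ref{prop:pack_TightEx}.
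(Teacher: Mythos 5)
Your proposal is correct and uses the same underlying mechanism as the paper --- exhibit an explicit well-behaved covering polyhedron whose LP value tends to $1$ while a single Chv\'atal--Gomory cut (hence a split cut with one empty branch) certifies $z^{\S} \ge 2$ --- but with a different and in fact simpler witness. The paper takes the symmetric $n$-dimensional system $x_i + \sum_{j \ne i} 2x_j \ge 2$, $i \in [n]$, aggregates all $n$ rows with equal weights, and rounds; your instance $\{x \in \R^2_+ \mid Mx_1 + Mx_2 \ge M+1\}$ is a single-row polyhedron in $\R^2$, so no aggregation is needed and the verification that $z^{\S} = z^I = 2$ is immediate. (The paper reuses its example from the aggregation-closure literature because the same instance also certifies tightness for the $1$-row and $1$-row CG closures; your example would not serve that secondary purpose, but it is perfectly adequate here.) One small parameter issue: with $M = \max\{1, \lceil \tfrac{2}{\epsilon}-1\rceil\}$ you only get $\frac{z^{\S}}{z^{LP}} = \frac{2M}{M+1} \ge 2-\epsilon$, with equality whenever $\tfrac{2}{\epsilon}-1$ is a positive integer; in the extreme case $\epsilon = 1$ this gives $M=1$, where $\tilde{P}_C = \{x \in \R^2_+ \mid x_1 + x_2 \ge 2\}$ is already integral, $\S(\tilde{P}_C) = \tilde{P}_C$, and the claimed non-containment genuinely fails. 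Taking $M = \lceil \tfrac{2}{\epsilon}\rceil$ (or any $M > \tfrac{2}{\epsilon}-1$) makes the inequality strict and repairs this; note the paper's own choice $n = \max\{2,\lceil \tfrac{1}{\epsilon}\rceil\}$ exhibits the same equality at the boundary, so this is a cosmetic fix rather than a conceptual gap.
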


\begin{proof} 
Let $\epsilon >0$ and $n = \textup{max}\{2,\lceil \frac{1}{\epsilon}\rceil\}$.
Consider the instance 
$\textup{min} \{ \sum_{j = 1}^n x_j \,|\, x \in \tilde{P}_C\}$, where 
\begin{align*}
\tilde{P}_C = \{x \in \R^n_+ \,|\, x_i + \displaystyle \sum_{j \in [n]\setminus \{i\}} 2x_j \geq 2, \ \forall i \in [n]\}.
\end{align*}
Note that $\tilde{P}_C$ is well-behaved.
It is sufficient to show that 
%$\frac{z^{\S}}{z^{1\S}} \geq 2 - \epsilon$
$\frac{z^{\S}}{z^{LP}} \geq 2 - \epsilon$
for this instance. 

\begin{enumerate}
\item 
%$z^{1\S} \le \frac{2n}{2n - 1}$:
$z^{LP} \le \frac{2n}{2n - 1}$: 
%Observe that the set $\{x \in \mathbb{R}^n_{+}\,|\,  x_i + \sum_{j \in [n]\setminus \{i\}} 2x_j \geq 2\}$, obtained by using just a single original constraint, is integral for every $i \in [n]$. 
%This implies that $1\S(P_C) = P_C$.
It can be checked that the point $\bar x_j = \frac{2}{2n - 1}$ for each $j \in [n]$ is in 
%$1\S(P_C)$. Thus, $z^{1\S} \le \frac{2n}{2n - 1}$. 
$\tilde{P}_C$. Thus, $z^{LP} \le \frac{2n}{2n - 1}$. 
\item 
$z^{\S} \geq 2$: 
Adding all the constraints defining $\tilde{P}_C$ we obtain the valid inequality
\begin{eqnarray*} %\label{eq:preCG}
\sum_{j \in [n]} x_j \geq \frac{2n}{2n - 1}.
\end{eqnarray*}  
The corresponding CG cut is $\sum_{j \in [n]} x_j \geq 2$. Since each CG cut is also a split cut we obtain $z^{\S} \geq 2$.
\end{enumerate}
Thus, 
%$\frac{z^{\S}}{z^{1\S}} \geq 2 - \frac{1}{n}$
$\frac{z^{\S}}{z^{LP}} \geq 2 - \frac{1}{n}$; and our choice of $n$ completes the proof. 
%\qed
\end{proof}
%###########################################################
\begin{proposition}
\label{prop:CoveringSplitWell}
Let  $P_C$ be well-behaved, i.e., $A_{ij} \leq b_i$ for all $i \in [m], j \in [n]$. Then, $\S(P_C)$ is well-behaved.
\end{proposition}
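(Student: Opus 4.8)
The plan is to invoke Proposition~\ref{prop:CoveringPoly}, which already tells us that $\S(P_C)$ is a covering polyhedron, so it suffices to examine its facet-defining inequalities $\bpar^\top x \ge \delta$ with $\bpar \in \R_+^n$. The trivial facets $x_j \ge 0$, the facets with $\delta \le 0$, and any inequality already valid for $P_C$ are harmless, so I would restrict attention to nontrivial cuts with $\delta > 0$ and prove that $\bpar_j \le \delta$ for every $j \in [n]$. Following the strategy announced just before the statement, I would show that any split cut with some coefficient $\bpar_k > \delta$ is dominated, in the covering sense (coordinatewise smaller coefficients with the same right-hand side, hence a stronger and still valid inequality), by a well-behaved split cut, so that it cannot be facet-defining for $\S(P_C)$.

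The easy part is the CG case. If $\bpar^\top x \ge \delta$ is a CG cut, then $\bpar = \lceil \lambda^\top A\rceil$ and $\delta = \lceil \lambda^\top b\rceil$ for some $\lambda \in \R_+^m$. Since $P_C$ is well-behaved, $A_{ij} \le b_i$ gives $\sum_i \lambda_i A_{ij} \le \sum_i \lambda_i b_i = \lambda^\top b$ for every $j$, and monotonicity of the ceiling yields $\bpar_j = \lceil \sum_i \lambda_i A_{ij}\rceil \le \lceil \lambda^\top b\rceil = \delta$. Thus CG cuts are automatically well-behaved, and nothing further is needed in this case.

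For a general non-CG split cut arising from $(\pi,\pi_0)$ with both disjunctive sides nonempty, I would reuse the Farkas multipliers $\lambda^1,\lambda^2,\mu_1,\mu_2,\sigma^1,\sigma^2$ of \eqref{eq:cSplit} to pass to the two-row relaxation $Q = \{x \ge 0 \mid (\lambda^1)^\top A x \ge (\lambda^1)^\top b,\ (\lambda^2)^\top A x \ge (\lambda^2)^\top b\} \supseteq P_C$, which is again well-behaved and for which $\bpar^\top x \ge \delta$ remains valid for $Q^{\pi,\pi_0}$. As in the proof of Proposition~\ref{prop:covering2approx}, I would then reduce to the two variables relevant to the coordinate $k$ under scrutiny and run a case analysis on the orientation of the split set, separating the special split $(\pi,\pi_0) = (e_1,0)$ from the rest, exactly in the spirit of the disjunction \eqref{eq:vee} and the intercept computation \eqref{eq:heq} of that proof. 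The geometric content is that well-behavedness forces every axis-intercept of $Q|_{\R^2}$ to be at least $1$ (the quantity $h \ge 1$ in Figure~\ref{fig:1pic}), which is precisely what prevents a facet from placing on a single variable a coefficient larger than its right-hand side.

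The hard part will be this last two-dimensional analysis. The delicate region is $0 \le x_k < 1$: on the complementary region $x_k \ge 1$ one has $\bpar^\top x \ge \bpar_k x_k \ge \bpar_k > \delta$, so the capped inequality is trivially valid there, but for $x_k < 1$ naive capping can cut off points lying on the facet, and one genuinely has to exploit the disjunction $\pi^\top x \le \pi_0$ versus $\pi^\top x \ge \pi_0 + 1$ together with the well-behaved intercepts in order to produce the dominating well-behaved cut. Once the two-dimensional claim is established, iterating the domination over all coordinates $k$ with $\bpar_k > \delta$ shows that no facet of $\S(P_C)$ can violate $\bpar_j \le \delta$, which completes the proof that $\S(P_C)$ is well-behaved.
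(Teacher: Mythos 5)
There is a genuine gap: your argument for the only substantive case — a non-CG split cut with some $\bpar_k > \delta$ — is a plan rather than a proof. You correctly dispose of the CG case (exactly as the paper does, via $\lceil \lambda^\top A_{\cdot j}\rceil \le \lceil \lambda^\top b\rceil$), and the reduction to the two-row relaxation $Q$ via the Farkas multipliers of \eqref{eq:cSplit} is a reasonable opening move. But you then defer the entire difficulty to a ``two-dimensional case analysis in the spirit of \eqref{eq:vee} and \eqref{eq:heq}'' without carrying it out, and it is not clear that machinery transfers. The geometric argument in Proposition~\ref{prop:covering2approx} is built around a specific \emph{vertex} $x^*$ of $Q$ minimizing a given objective, which is what justifies restricting to two coordinates and drawing Figure~\ref{fig:1pic}; here the quantity to be controlled is a single \emph{coefficient} $\bpar_k$ of a facet, there is no distinguished vertex, and it is not specified which second variable you would restrict to or how the intercept bound $h \ge 1$ would yield validity of the capped inequality on the region $0 \le x_k < 1$ — precisely the region you acknowledge is delicate. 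As written, the dominating well-behaved split cut is never exhibited.

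For comparison, the paper's proof of this case is purely algebraic and does not pass through a two-dimensional picture at all. It writes the Farkas certificates \eqref{eqn:CovWell_SideOne}--\eqref{eqn:CovWell_SideTwo} for the two sides of the disjunction, normalizes so that $\sigma^2_1 = 0$, and splits on the sign of $-\bar\pi_0 + \bar\pi_1$. In one case $\bpar_1 \le \delta$ follows immediately from well-behavedness of $P_C$; in the other, well-behavedness forces $\sigma^1_1 > \mu_1$, and replacing the split direction $\pi$ by $\tilde\pi = \pi - e_1$ (with $\tilde\sigma^1 = \sigma^1 - \mu_1 e_1$, $\tilde\sigma^2 = \sigma^2 + \mu_2 e_1$) produces a certificate in which \emph{both} slack multipliers on $x_1 \ge 0$ are strictly positive, so both can be decreased by $\epsilon$ to yield the strictly dominating split cut $(\bpar - 2\epsilon e_1)^\top x \ge \delta$, contradicting that the original inequality was facet-defining. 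This shift of the split direction is the key idea your proposal is missing; to complete your route you would either need to reproduce it or supply a genuinely worked-out geometric substitute.
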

\begin{proof}
Let $\bpar^\top x \geq \delta$ be a facet-defining (i.e., nondominated) inequality for $\S(P_C)$. If it is a CG cut, then we are done. Thus, we assume that it is a non-CG cut. For a contradiction, suppose that $\bpar_1 > \delta$. {\cred This is because any CG cut can be written as $\sum_j \lceil \sum_{i = 1}^m \lambda_i A_{ij} \rceil x_j \geq \lceil \sum_{i = 1}^m \lambda_i b_i \rceil$, where $\lambda_i \geq 0 $ for $i \in [m]$. Therefore $\sum_{i = 1}^m \lambda_i A_{ij} \leq \sum_{i = 1}^m \lambda_i $ for all $j \in [n]$, which implies that $\lceil \sum_{i = 1}^m \lambda_i A_{ij} \rceil \leq \lceil \sum_{i = 1}^m \lambda_i b_i \rceil$ for all $j \in [n]$.} 

We know that $\bpar^\top x \geq \delta$ is valid for $P_C^{\pi,\pi_0}$ for some $(\pi,\pi_0) \in \Z^n \times \Z$. Then, there exist multipliers $\lambda^1, \lambda^2 \in \R_+^m, \ \mu_1, \mu_2 \in \R_+$ and $\sigma^1, \sigma^2 \in \R_+^n$ such that
\begin{align}
\label{eqn:CovWell_SideOne}
& (\bpar,\delta) = \lambda^1 (A,b) + \mu_1 (-\pi,-\pi_0) + \sigma^1 (\ones,0) \\
\label{eqn:CovWell_SideTwo}
& (\bpar,\delta) = \lambda^2 (A,b) + \mu_2 (-\bar{\pi},-\bar{\pi}_0) + \sigma^2 (\ones,0)
\end{align}
where $(\bar{\pi},\bar{\pi}_0) = (-\pi,-\pi_0-1)$. Note that if $\sigma^1_1 > 0$ and $\sigma^2_1 > 0$, then we can obtain another split cut by decreasing both $\sigma^1_1$ and $\sigma^2_1$ by $\min \{ \sigma^1_1,\sigma^2_1\}$, which dominates the given split cut $\bpar^\top x \geq \delta$. Therefore, we assume, WLOG, that $\sigma^2_1 = 0$. Then, we make the following two cases: \\*[0.2cm]
\emph{Case 1}. $- \bar{\pi}_0 \geq - \bar{\pi}_1$: This implies that $\lambda^2 A_{\cdot 1} + \mu_2 (-\bar{\pi}_1) \leq \lambda^2 b + \mu_2 (-\bar{\pi}_0)$ (where $A_{\cdot 1}$ denotes the first column of $A$), equivalently $\bpar_1 \leq \delta$, which is a contradiction.
\\*[0.2cm]
\emph{Case 2}. $- \bar{\pi}_0 \ {\cred <} - \bar{\pi}_1$: This condition is equivalent to $1-\pi_1 < -\pi_0$. We first claim that $\sigma^1_1 > \mu_1$. From \eqref{eqn:CovWell_SideOne} and $\bpar_1 > \delta$, we have 
$$\lambda^1 A_{\cdot 1} - \mu_1 \pi_1 + \sigma^1_1 > \lambda^1 b + \mu_1 (-\pi_0) >  \lambda^1 b + \mu_1 (1-\pi_1),$$
which implies that 
$$(\lambda^1 A_{\cdot 1}-\lambda^1 b) + \sigma^1_1 - \mu_1 > 0.$$
As $P_C$ is well-behaved, we have $\lambda^1 A_{\cdot 1}-\lambda^1 b \leq 0$, thus we get $\sigma^1_1 > \mu_1$. Next, we let 
$$\tilde{\pi} := \pi - e_1, \ \tilde{\sigma}^1 := \sigma^1 - \mu_1 e_1, \ \tilde{\sigma}^2 := \sigma^2 + \mu_2 e_1.$$
Note that $\tilde{\sigma}^1_1 > 0$. Also, due to \eqref{eqn:CovWell_SideOne} and \eqref{eqn:CovWell_SideTwo}, we have
\begin{equation} 
\label{eq:CovWellBothSides}
(\bpar,\delta) = \lambda^1 (A,b) + \mu_1 (-\tilde{\pi},-\pi_0) + \tilde{\sigma}^1 (\ones,0) = \lambda^2 (A,b) + \mu_2 (\tilde{\pi},-\bar{\pi}_0) + \tilde{\sigma}^2 (\ones,0). 
\end{equation}
Note that $\mu_2 > 0$ since otherwise, i.e., when $\mu_2 = 0$, the equation \eqref{eqn:CovWell_SideTwo} and $\sigma^2_1 = 0$ give the contradiction $\bpar_1 = \lambda^2 A_{\cdot 1} \leq \lambda^2 b = \delta$. Therefore, we have $\tilde{\sigma}^2_1 > 0$ as well. If we reduce both $\tilde{\sigma}^1_1$ and $\tilde{\sigma}^2_1$ by a sufficiently small $\epsilon > 0$, so that they are still nonnegative, from \eqref{eq:CovWellBothSides} we obtain another valid split cut $(\bpar - 2 \epsilon e_1)^\top x \geq \delta$ which dominates $\bpar^\top x \geq \delta$, hence a contradiction.
\end{proof}

%\begin{theorem} \label{thm:rankCover} Consider a covering polyhedron $Q = \{ x \in \R_+^n \mid Ax \geq b\}$, where $A$ and $b$ satisfy $A_{ij} \leq b_i$ for all $i \in [m],~j \in [n]$. Then, the rank of the k-aggregation closure of $Q$ is at least $\left \lceil  \left(\frac{\textup{log}_2\left(\frac{z^I}{z^{LP}}\right)}{3+\textup{log}_2\textup{log}_2(2k)}\right)\right \rceil$. \end{theorem} 
%############################################################
%############################################################
\textbf{Acknowledgements.} 
Santanu S. Dey would like to acknowledge the support of the NSF grant CMMI\#1149400. Marco Molinaro would like to acknowledge the support of CNPq grants Universal \#431480/2016-8 and Bolsa de Produtividade em Pesquisa \#310516/2017-0; this work was partially while the author was a Microsoft Research Fellow at the Simons Institute for the Theory of Computing. {\cred We would like to thank the reviewers for their careful and constructive comments that have significantly improved the paper.}
%#########################################################
%#########################################################
% BibTeX users please use one of
%\bibliographystyle{spbasic}      % basic style, author-year citations
%\bibliographystyle{spmpsci}      % mathematics and physical sciences
%\bibliographystyle{spphys}       % APS-like style for physics
\bibliographystyle{plain}
\bibliography{LatticeFreeRank}   % name your BibTeX data base
%############################################################
%############################################################
\newpage
\begin{appendix}
\section{Additional proofs}
\label{sec:appendix}
\begin{observation} 
\label{obs:PackingSplitAgg}
For packing polyhedra, split cuts are not necessarily aggregation cuts.
\end{observation}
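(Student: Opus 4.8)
The plan is to exhibit a single well-behaved packing polyhedron together with a point that is separated by the split closure but survives every aggregation cut; the split cut responsible for the separation is then a split cut not dominated by aggregation cuts. Concretely, I would work with the cyclic instance
\[
P_P = \{x \in \R^3_+ \mid 2x_1 + x_2 \le 2,\ 2x_2 + x_3 \le 2,\ x_1 + 2x_3 \le 2\},
\]
which is well-behaved (every coefficient is at most the corresponding right-hand side), together with the point $\bar x = (\tfrac12,\tfrac12,\tfrac12) \in P_P$. Recalling that an aggregation cut is a valid inequality for $\conv\{x \in \Z^3_+ \mid (A^\top\lambda)^\top x \le \lambda^\top b\}$ for some $\lambda \in \R^3_+$, it suffices to show that $\bar x \notin \S(P_P)$ while $\bar x$ lies in $\conv\{x \in \Z^3_+ \mid (A^\top\lambda)^\top x \le \lambda^\top b\}$ for \emph{every} $\lambda \ge 0$.

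For the first claim I would use the single split set $S(e_1,0)$, i.e.\ the disjunction $x_1 \le 0 \vee x_1 \ge 1$. On the branch $x_1 = 0$ the constraints reduce to $2x_2 + x_3 \le 2$, while on the branch $x_1 \ge 1$ the first constraint forces $x_1 = 1,\ x_2 = 0$ and the third then caps $x_3 \le \tfrac12$. A short calculation shows that $3x_1 + 4x_2 + 2x_3 \le 4$ is valid for $P_P^{e_1,0} = \conv\big((P_P \cap \{x_1 \le 0\}) \cup (P_P \cap \{x_1 \ge 1\})\big)$ yet is violated by $\bar x$, whose left-hand side equals $\tfrac92 > 4$. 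Since $\S(P_P) \seq P_P^{e_1,0}$, this yields $\bar x \notin \S(P_P)$. (Equivalently, one argues directly that expressing $\bar x$ as a convex combination of the two branches forces all the $x_2$-mass to sit at $x_2 = 1$, which kills the $x_3$-coordinate on that branch, while the other branch caps $x_3$ at $\tfrac12$ — a contradiction.)

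The main obstacle is the second claim, since it is quantified over all aggregation multipliers. The structural feature I would exploit is that every row of $A$ sums to $3$: writing $c := A^\top\lambda$ and $d := \lambda^\top b = 2(\lambda_1+\lambda_2+\lambda_3)$, this gives $c_1 + c_2 + c_3 = 3(\lambda_1+\lambda_2+\lambda_3) = \tfrac32 d$. Evaluating the three integer points $(1,1,0),(0,1,1),(1,0,1)$ against $c$ and summing yields $2(c_1+c_2+c_3) = 3d$, so at least one of them satisfies $c^\top z \le d$ and hence lies in the aggregated knapsack. Pairing that point with the complementary unit vector (always knapsack-feasible by well-behavedness) writes $\bar x$ as their midpoint, e.g.\ $\bar x = \tfrac12(1,1,0) + \tfrac12 e_3$, so $\bar x$ belongs to every single-aggregation integer hull, i.e.\ to the aggregation closure. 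Consequently the split cut $3x_1 + 4x_2 + 2x_3 \le 4$ separates a point of the aggregation closure and is therefore not dominated by any aggregation cut, which is the desired conclusion.
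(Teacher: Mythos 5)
Your proposal is correct, and the computations check out: the inequality $3x_1+4x_2+2x_3\le 4$ is valid on both branches of the disjunction $x_1\le 0 \ \vee\ x_1\ge 1$ (on the first branch it reduces exactly to the constraint $2x_2+x_3\le 2$, and on the second the feasible set collapses to the segment $\{(1,0,t)\mid 0\le t\le \tfrac12\}$ on which the left-hand side is at most $4$), it cuts off $\bar x=(\tfrac12,\tfrac12,\tfrac12)$, and the averaging identity $c^\top(1,1,0)+c^\top(0,1,1)+c^\top(1,0,1)=2(c_1+c_2+c_3)=3d$ together with well-behavedness (which gives $c^\top e_j = c_j\le d$) places $\bar x$, as the midpoint of a feasible integer point and the complementary unit vector, in every aggregated integer hull. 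The paper proves the observation by a different, lower-dimensional construction: it takes $P=\{x\in\R^2_+ \mid 7x_1+x_2\le 7,\ 4x_2\le 7\}$, the split cut $7x_1+4x_2\le 7$ obtained from $S(e_1,0)$, and the two points $(0,2)$ and $(1,1)$; it then shows that every aggregation $P(\alpha)$, $\alpha\in[0,1]$, contains at least one of these two points, so no single aggregation cut can separate both, while the split cut separates both. Your route is genuinely different in structure: you exhibit one fractional point lying in the \emph{entire} aggregation closure yet cut off by a single split cut, which is a slightly stronger conclusion, and your row-sum symmetry trick elegantly sidesteps the case analysis over the multiplier simplex that three rows would otherwise require. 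What the paper's version buys is economy: two variables, two rows, and a one-parameter family of aggregations verified by two explicit threshold inequalities. Either argument establishes the observation.
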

\begin{proof}
An example, where there exists a split cut that cannot be obtained as an aggregation cut, is provided in Figure \ref{fig:PackingSplitAgg}. 
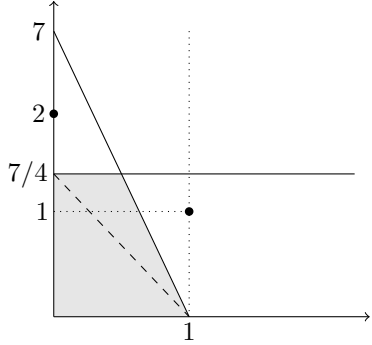
\begin{figure}[h]
\centering
\begin{tikzpicture}[scale=1] 
    \draw[white,fill=gray!20!white] (0,0) -- (1.8,0) -- (0.9,1.9) -- (0,1.9) -- cycle;
    \draw[->] (0,0) -- (0,4.2);
    \draw[->] (0,0) -- (4.2,0);
    \draw (0,3.8) -- (1.8,0);
    \draw (0,1.9) -- (4,1.9);
    \node[fill,circle,inner sep=1.15pt] at (0,2.7) {};
    \node[fill,circle,inner sep=1.15pt] at (1.8,1.4) {};
    \node (A) at (-0.2,3.8) {\small 7};
    \node (A) at (-0.2,2.7) {\small 2};
    \node (A) at (-0.35,1.9) {\small 7/4};
    \node (A) at (1.8,-0.2) {\small 1};
    \draw[dotted] (0,1.4) -- (1.8,1.4);
    \node (A) at (-0.152,1.4) {\small 1};
    \draw[dotted] (1.8,3.8) -- (1.8,0);
    \draw[dashed] (0,1.9) -- (1.8,0);
\end{tikzpicture}
\caption{A packing polyhedron for which there exists a split cut that cannot be obtained as an aggregation cut.}
\label{fig:PackingSplitAgg}
\end{figure}
In the figure, the shaded region represents the packing polyhedron $P = \{ x \in \R_+^2   \mid 7 x_1 + x_2 \leq 7, 4 x_2 \leq 7\}$. It is easy to see that $7 x_1+4 x_2 \leq 7$ (the dashed line in the figure) is a split cut obtained by using the split set $S(e_1,0)$. Note that this cut separates both of the points $(0,2)$ and $(1,1)$. We next show that this cut is not an aggregation cut by proving that $(0,2)$ and $(1,1)$ are not separated at the same time by any aggregation cut. An inequality is an aggregation cut for $P$ if it is valid for the set $P(\alpha) := \conv ( \{ x\in \R_+^2 \mid (7-7\alpha) x_1 + (3 \alpha +1) x_2 \leq 7 \})$ for some $\alpha \in [0,1]$. It can be easily verified that if $\alpha \leq 5/6$, then $(0,2) \in P(\alpha)$, and if $\alpha \geq 1/4$, then $(1,1) \in P(\alpha)$.
\end{proof}
%----------------------------------------------
\begin{observation} 
\label{obs:CoveringSplitAgg}
For covering polyhedra, split cuts are not necessarily aggregation cuts.
\end{observation}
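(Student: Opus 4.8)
The plan is to mirror the argument for the packing case in Observation \ref{obs:PackingSplitAgg}: I will exhibit a concrete well-behaved covering polyhedron $P_C \seq \R^2$ together with a single split cut that is valid for $\S(P_C)$ but cannot be obtained as an aggregation cut. Recall that an inequality is an aggregation cut for $P_C = \{x \in \R^2_+ \mid Ax \ge b\}$ precisely when it is valid for the single-row relaxation $P(\lambda) := \conv(\{x \in \R^2_+ \mid (\lambda^\top A)x \ge \lambda^\top b\})$ for some $\lambda \in \R^2_+$; since rescaling $\lambda$ leaves $P(\lambda)$ unchanged, this is a one-parameter family which I will index by $\alpha \in [0,1]$ via $\lambda = (1-\alpha,\alpha)$.

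First I would choose a two-constraint covering polyhedron $P_C$ whose LP relaxation has a fractional vertex, select a convenient split set (for instance $S(e_1,0)$), and compute the resulting split cut $\bpar^\top x \ge \delta$ valid for $P_C^{e_1,0} = \conv\big((P_C \cap \{x_1 \le 0\}) \cup (P_C \cap \{x_1 \ge 1\})\big)$. The coefficients should be picked so that this split cut simultaneously separates two distinct fractional points $p$ and $q$ from $\conv(P_C)$; identifying these two ``witness'' points plays the same role as the points $(0,2)$ and $(1,1)$ in the packing example.

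Next I would show that no single aggregation cut can separate both $p$ and $q$. Concretely, I would compute, for each of $p$ and $q$, the set of multipliers $\alpha$ for which the point lies in $P(\alpha)$, obtaining two subintervals of $[0,1]$, say $p \in P(\alpha)$ for $\alpha \le \alpha_p$ and $q \in P(\alpha)$ for $\alpha \ge \alpha_q$. If the data are arranged so that $\alpha_q \le \alpha_p$ --- i.e., the two survival intervals cover all of $[0,1]$ --- then for every $\alpha$ at least one of $p,q$ survives in $P(\alpha)$ and hence is not cut by any inequality valid for $P(\alpha)$. Consequently no aggregation cut separates both points, whereas the split cut $\bpar^\top x \ge \delta$ does, proving that the split cut is not an aggregation cut.

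The main obstacle is purely in the choice of constants: I must select the covering data $(A,b)$ so that (i) $P_C$ is well-behaved, (ii) the chosen split disjunction yields a cut that genuinely removes two distinct fractional points, and (iii) the resulting survival intervals for those two points overlap to cover $[0,1]$. This is the covering analog of the interval bookkeeping ``$\alpha \le 5/6$'' and ``$\alpha \ge 1/4$'' appearing in Observation \ref{obs:PackingSplitAgg}, and once the example is fixed all the required verifications reduce to elementary arithmetic.
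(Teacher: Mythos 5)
Your proposal follows essentially the same route as the paper: the paper's proof takes $P = \{x \in \R_+^2 \mid 7x_1 + x_2 \ge 7,\ 4x_2 \ge 7\}$, the split cut $21x_1 + 4x_2 \ge 28$ from $S(e_1,0)$, the witness points $(0,6)$ and $(1,1)$, and the survival intervals $\alpha \ge 1/18$ and $\alpha \le 1/4$, which cover $[0,1]$ exactly as your interval-bookkeeping argument requires. The only thing missing from your write-up is the explicit choice of constants, and the paper's example confirms that such a choice exists, so your plan is sound.
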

\begin{proof}
An example, where there exists a split cut that cannot be obtained as an aggregation cut, is provided in Figure \ref{fig:CoveringSplitAgg}. 
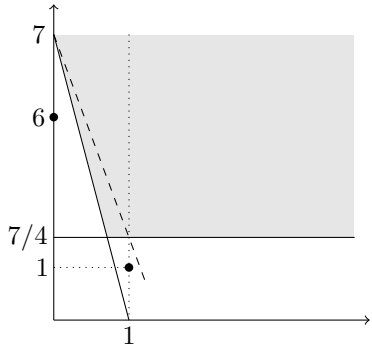
\begin{figure}[h]
\centering
\begin{tikzpicture}[scale=1] 
    \draw[white,fill=gray!20!white] (0,3.8) -- (0.7108,1.1) -- (4,1.1) -- (4,3.8) -- cycle;
    \draw[->] (0,0) -- (0,4.2);
    \draw[->] (0,0) -- (4.2,0);
    \draw (0,3.8) -- (1,0);
    \draw (0,1.1) -- (4,1.1);
    \node[fill,circle,inner sep=1.15pt] at (0,2.7) {};
    \node[fill,circle,inner sep=1.15pt] at (1,0.7) {};
    \node (A) at (-0.2,3.8) {\small 7};
    \node (A) at (-0.2,2.7) {\small 6};
    \node (A) at (-0.35,1.1) {\small 7/4};
    \node (A) at (1,-0.2) {\small 1};
    \draw[dotted] (0,0.7) -- (1,0.7);
    \node (A) at (-0.152,0.7) {\small 1};
    \draw[dotted] (1,3.8) -- (1,0);
    \draw[dashed] (0,3.8) -- (1.222,0.5); % 27x+10y = 38
\end{tikzpicture}
\caption{A covering polyhedron for which there exists a split cut that cannot be obtained as an aggregation cut.}
\label{fig:CoveringSplitAgg}
\end{figure}
In the figure, the shaded region represents the covering polyhedron $P = \{ x \in \R_+^2   \mid 7 x_1 + x_2 \geq 7, 4 x_2 \geq 7\}$. It is easy to see that $21 x_1+4 x_2 \geq 28$ (the dashed line in the figure) is a split cut obtained by using the split set $S(e_1,0)$. Note that this cut separates both of the points $(0,6)$ and $(1,1)$. We next show that this cut is not an aggregation cut by proving that $(0,6)$ and $(1,1)$ are not separated at the same time by any aggregation cut. An inequality is an aggregation cut for $P$ if it is valid for the set $P(\alpha) := \conv ( \{ x\in \R_+^2 \mid (7-7\alpha) x_1 + (3 \alpha +1) x_2 \geq 7 \})$ for some $\alpha \in [0,1]$. It can be easily verified that if $\alpha \geq 1/18$, then $(0,6) \in P(\alpha)$, and if $\alpha \leq 1/4$, then $(1,1) \in P(\alpha)$.
\end{proof}
\end{appendix}

\end{document}